\newtheorem{theorem}{Theorem}[section]
\newtheorem{lemma}[theorem]{Lemma}
\newtheorem{proposition}[theorem]{Proposition}
\newtheorem{cor}[theorem]{Corollary}
\theoremstyle{definition}
\newtheorem{definition}[theorem]{Definition}
\theoremstyle{remark}
\newtheorem{remark}[theorem]{Remark}
\newtheorem{example}[theorem]{Example}
\numberwithin{equation}{section}
\begin{document}

\begin{frontmatter}

  \title{Algebraic characterization of quasi-isometric spaces via the
    Higson compactification}

  \author[label1]{Jes\'us A. \'Alvarez L\'opez\fnref{label3}} %
  \fntext[label3]{Research of first author supported by DGICYT and
    MICINN, Grants PB95-0850 and MTM2008-02640.}
  \address[label1]{{Departamento de Xeometr\'\i a e Topolox\'\i a},
    {Facultade de Matem\'aticas}, {Universidade de Santiago de
      Compostela}, {Campus Universitario Sur}, {15706 Santiago de
      Compostela, Spain} \fnref{label2}} \ead{jesus.alvarez@usc.es}

  \author[label2]{Alberto Candel\corref{cor1}}
  \address[label2]{{Department of Mathematics}, {California State
      University at Northridge}, {Northridge, CA 91330}, {U.S.A.}}
  \cortext[cor1]{Corresponding Author} \ead{alberto.candel@csun.edu}

  \begin{keyword}
    Metric space \sep Coarse space \sep Quasi-isometry \sep Coarse
    maps \sep Higson compactification \sep Higson algebra

\MSC[2000] 53C23 \sep 54E40
\end{keyword}

\begin{abstract} 
  The purpose of this article is to characterize the quasi-isometry
  type of a proper metric space via the Banach algebra of Higson
  functions on it.
\end{abstract}

\end{frontmatter}

\section{Introduction}

The Gelfand representation is a contravariant functor from the
category whose objects are commutative Banach algebras with unit and
whose morphisms are unitary homomorphisms into the category whose
objects are compact Hausdorff spaces and whose morphisms are
continuous mappings. This functor associates to a unitary Banach
algebra its set of unitary characters (they are automatically
continuous), topologized by the weak* topology.

That functor (the Gelfand representation) is right adjoint to the
functor that to a compact Hausdorff space $S$ assigns the algebra
$C(S)$ of continuous complex valued functions on $S$ normed by the
supremum norm. If $A$ is a commutative Banach algebra without radical,
then $A$ is isomorphic to an algebra of continuous complex valued
functions on the space of unitary characters of $A$.

The Gelfand representation is the base for an algebraic
characterization of compactifications of topological spaces.  A
compactification of a topological space, $X$, is a pair $(X^\kappa,
e)$ consisting of a compact Hausdorff topological space $X^\kappa$ and
an embedding $e:X \to X^\kappa$ with open dense image. (Thus, only
locally compact spaces admit compactifications in this sense.) The
complement $ X^\kappa \setminus e(X)$ is called the corona (or growth)
of the compactification $(X^\kappa,e)$, and denoted by $\kappa X$.
Usually $X$ is identified with its image $e(X)$ and thus regarded as a
subspace of $X^\kappa$.  In this case, the closure
$\overline{X}=X^\kappa$ and the boundary $\partial X= \kappa X$.

Let $X$ be a topological space and let $(X^\kappa, e)$ be a
compactification of $X$. Then $C_b(X^\kappa)=C(X^\kappa)$ is a Banach
algebra and the embedding $e:X\to X^\kappa$ induces an algebraic
isomorphism of $C_b(X^\kappa)$ into the Banach algebra $C_b(X)$ via
composition with the embedding $e$. The image of $C_b(X^\kappa)$ in
$C_b(X)$ consists precisely of all the bounded continuous functions on
$X$ that admit a continuous extension to $X^\kappa$ (via $e$). It
therefore contains the constant functions on $X$ and generates the
topology of $X$ in the sense that if $E$ is a compact subset of $X$
and $x\in X\setminus E$, then there is a function in
$e^*C_b(X^\kappa)$ that takes on the value $0$ at $x$ and is
identically $1$ on $E$.  Conversely, if $A$ is a Banach subalgebra of
$C_b(X)$ that contains the constant functions on $X$ and generates the
topology of $X$, then $A$ is isomorphic to the algebra of (bounded)
continuous functions on a compactification of $X$.

For example, $C_b(X)$, the algebra of bounded continuous functions on
$X$, corresponds to the Stone-\v{C}ech compactification of $X$, and
$\mathbb{C}+C_0(X)$, the subalgebra of $C_b(X)$ generated by the
constants and the continuous functions that vanish at infinity,
corresponds to the one-point compactification of $X$.

\begin{theorem}[Gelfand]
  Two locally compact Hausdorff spaces, $X$ and $Y$, are homeomorphic
  if and only if the Banach algebras $C_b(X)$ and $C_b(Y)$ are
  algebraically isomorphic.
\end{theorem}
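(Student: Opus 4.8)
The plan is to derive everything from the correspondence recalled above, between the unital Banach algebra $C_b(X)$ and the Stone--\v{C}ech compactification: the space of unitary characters of $C_b(X)$, equipped with the weak* topology, is canonically homeomorphic to $\beta X$, and the evaluation map $X\to\beta X$ realizes $X$ as an open dense subspace. The \emph{easy direction} is purely formal: a homeomorphism $h\colon X\to Y$ induces $h^*\colon C_b(Y)\to C_b(X)$, $f\mapsto f\circ h$, which is an isometric unital algebra isomorphism with inverse $(h^{-1})^*$; hence homeomorphic spaces have isomorphic algebras.

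For the \emph{hard direction}, suppose $T\colon C_b(X)\to C_b(Y)$ is an algebra isomorphism. The set of maximal ideals, together with the hull--kernel (equivalently, weak*) topology on it, is defined entirely in terms of the algebra structure; hence $T$ induces a homeomorphism $\Phi\colon \beta Y\to\beta X$ of the character spaces, satisfying $\widehat{Tf}=\widehat f\circ\Phi$. (No continuity of $T$ is needed here, since the maximal ideal space is an algebraic invariant; continuity is in any case automatic for semisimple commutative Banach algebras, but we do not use it.) It remains to show that $\Phi$ carries $Y\subseteq\beta Y$ onto $X\subseteq\beta X$; being the restriction of a homeomorphism, it will then be the sought homeomorphism $Y\to X$.

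To recover $X$ inside $\beta X$ from the algebra alone, I would use the ideal $C_0(X)\subseteq C_b(X)$ of functions vanishing at infinity. The continuous extensions to $\beta X$ of such functions vanish exactly on the corona $\beta X\setminus X$, so $\mathrm{hull}(C_0(X))=\beta X\setminus X$ and therefore $X=\{\,p\in\beta X: \widehat f(p)\ne 0\text{ for some }f\in C_0(X)\,\}$. Thus it suffices to prove that $T\bigl(C_0(X)\bigr)=C_0(Y)$, after which $\Phi(Y)=X$ follows at once.

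This last equality is where I expect the \emph{main obstacle} to lie: one must characterize $C_0(X)$ \emph{intrinsically}, by a property that any abstract algebra isomorphism must respect, rather than by reference to the already-given splitting $\beta X=X\sqcup(\beta X\setminus X)$. The characterization I would aim for encodes local compactness of $X$ (equivalently, openness of $X$ in $\beta X$): $f\in C_0(X)$ precisely when every superlevel set $\{\,p\in\beta X:|\widehat f(p)|\ge\varepsilon\,\}$ is disjoint from the corona. Converting "disjoint from the corona" into a condition phrased solely through the ideals and characters of the algebra — so that it transports along $T$ — is the delicate step, and is exactly the point at which the hypothesis that $X$ be locally compact is indispensable. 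Once $C_0(X)$ is pinned down in this invariant fashion, $T(C_0(X))=C_0(Y)$ is immediate, completing the argument. An equivalent route would be to apply the Gelfand--Naimark theorem directly to the non-unital algebras $C_0(X)$ and $C_0(Y)$, whose character spaces are $X$ and $Y$, thereby reducing the whole problem to the intrinsic recovery of $C_0$ from $C_b$.
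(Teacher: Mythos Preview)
The paper does not prove this theorem; it is stated in the introduction as a classical result attributed to Gelfand, followed only by the one-sentence remark that an isomorphism $C_b(Y)\to C_b(X)$ induces a homeomorphism $X^\beta\to Y^\beta$ carrying $X$ onto $Y$. There is thus no proof here to compare your proposal against.

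Your outline is sound up to the step you flag as the main obstacle, and you are right to be stuck there: the obstacle is in fact insurmountable, because the statement as written is false. Take $X=\mathbf{N}$ with the discrete topology and $Y=\beta\mathbf{N}$. Both are locally compact Hausdorff; $C_b(\mathbf{N})=\ell^\infty$ while $C_b(\beta\mathbf{N})=C(\beta\mathbf{N})$ (since $\beta\mathbf{N}$ is already compact), and these are isometrically $*$-isomorphic via extension and restriction. Yet $\mathbf{N}$ and $\beta\mathbf{N}$ are certainly not homeomorphic. In the language of your argument, $\beta X=\beta Y=\beta\mathbf{N}$ and the identity homeomorphism does not carry $X$ onto $Y$; so there can be no purely algebraic characterization of $C_0(X)$ inside $C_b(X)$, and the ``delicate step'' you hoped to supply does not exist. (Restricting to non-compact spaces does not save the statement either: with $X=\mathbf{N}\sqcup\mathbf{N}$ and $Y=\mathbf{N}\sqcup\beta\mathbf{N}$ one again has $\beta X\cong\beta Y$ but $X\not\cong Y$.) The correct classical theorem uses $C_0$ in place of $C_b$, whose character space is $X$ itself rather than $\beta X$, and that is presumably what is intended; your closing remark about Gelfand--Naimark for $C_0$ names precisely the right result, but the reduction from $C_b$ to $C_0$ that you were searching for cannot be made.
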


In fact, an algebraic isomorphism $C_b(Y)\to C_b(X)$ induces a
homeomorphism $X^\beta\to Y^\beta$ that maps $X$ onto $Y$.

The present paper is motivated by algebraic characterizations of
topological structures for which the above theorem is a milestone.
Refinements of this milestone that motivated the present paper include
the work of Nakai~\cite{Nakai60} on the algebraic characterization of
the holomorphic and quasi conformal structures of Riemann surfaces.

Let $R$ be a Riemann surface and $M(R)$ the algebra of bounded complex
valued functions on $R$ which are absolutely continuous on lines and
have finite Dirichlet integral $D(f)= \displaystyle\int_R df\wedge
\star\, df$.  Endowed with the norm $\|f\|=\displaystyle\sup_{x\in
  R}|f(x)|+[D(f)]^{1/2}$, $M(R)$ is a commutative Banach algebra.  It
contains the constant functions and it also contains the compactly
supported smooth functions, so it generates the topology of $R$.
Therefore $M(R)$ is the algebra of continuous complex valued functions
on a compactification of $R$, the so called Royden compactification.

Work on the Royden compactification culminated in the following theorem
of Nakai~\cite{Nakai60}:

\begin{theorem}[Nakai]
  Two Riemann surfaces $R$ and $R'$ are quasi-conformally equivalent
  if and only if the corresponding algebras $M(R)$ and $M(R')$ are
  algebraically isomorphic.

  Two Riemann surfaces $R$ and $R'$ are conformally equivalent if and
  only if there is a norm preserving isomorphism between $M(R)$ and
  $M(R')$.
\end{theorem}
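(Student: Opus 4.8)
The plan is to invert the Gelfand-type correspondence recalled above and then to read off the (quasi-)conformal structure from the norm. As a subalgebra of $C_b(R)$ that contains the constants and generates the topology of $R$, the algebra $M(R)$ is an algebra of continuous functions on a compactification $R^{*}$ of $R$, the Royden compactification, and its space of characters is precisely $R^{*}$; write $\Gamma = R^{*}\setminus R$ for the Royden boundary. Given a ring isomorphism $T\colon M(R')\to M(R)$, I would first record the ``automatic'' facts. Since $M(R)$ is semisimple (its characters separate points) the isomorphism $T$ is automatically continuous and unital, hence, being additive, $\mathbb{R}$-linear; because invertibility is ring-theoretic, $T$ preserves spectra and so carries real-valued functions to real-valued functions, while $T(i)^{2}=-1$ together with the connectedness of $R$ forces $T(i)=\pm i$. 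Thus $T$ is either complex-linear or conjugate-linear, and in either case it dualizes to a homeomorphism $\tau\colon R^{*}\to R'^{*}$ with $Tf = f\circ\tau$.

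The second and structurally decisive step is to prove $\tau(R)=R'$, equivalently $\tau(\Gamma)=\Gamma'$. For this I would supply an intrinsic, purely ring-theoretic description of the interior points: a character $p\in R^{*}$ should lie in $R$ if and only if there exist functions in $M(R)$ that serve as local coordinates near $p$, so that near $p$ the algebra is modelled on smooth functions of two real variables --- a local structure that characters of $\Gamma$ do not possess. Any such property, being formulated solely in terms of the ring operations and the lattice of ideals, is preserved by $T$, so $\tau$ restricts to a homeomorphism $R\to R'$.

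It remains to upgrade this homeomorphism, which is the analytic heart of the matter and where the Dirichlet integral finally enters. Localizing and exploiting the scale-invariance of $D$ to absorb the supremum-norm contribution, the boundedness of $T$ and of $T^{-1}$ should yield a constant $K\ge 1$ with $K^{-1}D_{R'}(f)\le D_{R}(Tf)\le K\,D_{R'}(f)$ for all $f$; that is, $\tau$ distorts Dirichlet energy by at most the factor $K$. Since $D(f)=\int df\wedge\star\,df$ is manufactured from the conformal $\star$-operator and is a conformal invariant, a homeomorphism distorting the energy of every function by a bounded factor is quasi-conformal, which gives the first assertion. For the second, a norm-preserving isomorphism forces $K=1$, so $\tau$ preserves every Dirichlet integral exactly; a homeomorphism that preserves $D$ preserves $\star$, hence the conformal structure, and is therefore conformal or anticonformal. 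The converse directions are the easy ones: a (quasi-)conformal map $h\colon R\to R'$ pulls back to an isomorphism $f\mapsto f\circ h$ that is bounded (respectively isometric) by the (quasi-)conformal invariance of $D$.

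The main obstacle I anticipate is the passage from bounded distortion of Dirichlet energy to quasi-conformality in the genuine geometric sense: one must extract pointwise analytic regularity --- absolute continuity on lines together with essentially bounded dilatation --- from a purely global energy estimate, which is where two-dimensional geometric function theory does the real work. The identification $\tau(R)=R'$ is a close second, since it hinges on a correct ring-theoretic separation of interior points from Royden-boundary points; by comparison the automatic-continuity and scalar-linearity verifications, and the easy converse directions, are routine.
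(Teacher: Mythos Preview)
The paper does not prove this theorem. Nakai's theorem is stated in the introduction purely as motivation and historical background, with a citation to Nakai~\cite{Nakai60}; the body of the paper is devoted to the Higson-algebra analogue (Theorem~\ref{t:main theorem}) and never returns to the Royden algebra or to Riemann surfaces. There is therefore no proof in the paper to compare your proposal against.

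That said, your outline is a faithful sketch of the strategy in Nakai's original paper: pass to characters to obtain a homeomorphism of Royden compactifications, separate the interior from the Royden boundary by an intrinsic algebraic criterion, and then extract (quasi-)conformality from the behaviour of the Dirichlet integral under the isomorphism. You have correctly flagged the two genuinely hard steps. The characterization of interior points that Nakai actually uses is not quite ``local coordinates'' but rather a $G_\delta$-type condition together with the existence of suitable potential-theoretic functions; and the passage from a two-sided energy bound to quasi-conformality is indeed the analytic core, relying on capacity/modulus estimates specific to dimension two. Your sketch is honest about where the work lies, but as written it is a plan rather than a proof: both of those steps would require substantial additional argument that you have only gestured at.
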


The Royden algebra can be defined on any locally compact metric space,
$(X,d)$, endowed with a Borel measure $\mu$. If $f$ is a complex
valued function on $X$, then its gradient norm is the function $|
\nabla f |$ on $X$ given by $|\nabla f|(x) =
\displaystyle{\limsup_{z\to x} \dfrac{|f(x)-f(z)|}{d(x,z)}}$. A
function $f$ on $X$ is a Royden function if it is bounded, continuous,
and satisfies $\int_X |\nabla f|^2 \cdot \mu<\infty$. The family of
Royden functions on $X$ form a subalgebra of the algebra of bounded
continuous functions which contains the constant functions and the
compactly supported functions.  Its completion with respect to the
norm given by $\| f\| = \displaystyle\sup_{x\in X} |f(x)| + (\int_X
|\nabla f|^2 \, \mu)^{1/2}$ is a Banach algebra and it gives rise to a
compactification of $X$, called the Royden compactification of $X$. If
$f_n$ is a Cauchy sequence of Royden functions (with respect to the
Royden norm), then $f_n$ converges uniformly to a continuous function
on $X$. Therefore, to each element in the Royden algebra, there
corresponds a bounded continuous function on $X$. This correspondence
$M(X) \to C_b(X)$ is a norm decreasing, injective, algebraic
homomorphism.

Nakai and others have studied and extended Nakai's Theorem on the
Royden algebra and Royden compactification of Riemann surfaces to
other metric spaces: Riemannian manifolds (Nakai~\cite{Nakai72},
Lelong-Ferrand~\cite{Ferrand73}), and domains in Euclidean spaces
(Lewis~\cite{Lewis71}). A generalization of Nakai's Theorem involving
Royden $p$-compactifications was also given in \cite{Nakai00}. The
following theorem is a representative result of those works.

\begin{theorem}
  Let $R$, $R'$ be Riemannian manifolds of dimension $\dim R=\dim
  R'>2$, endowed with the induced path metric structure and Riemannian
  measure. Then $R$ and $R'$ are quasi-isometrically homeomorphic if
  and only if $R$ and $R'$ have homeomorphic Royden compactifications.
\end{theorem}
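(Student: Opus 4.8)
The plan is to prove both implications through the bridge provided by the Royden algebra: a quasi-isometric homeomorphism should induce a bounded isomorphism of Royden algebras and hence a homeomorphism of the associated spectra, while conversely a homeomorphism of Royden compactifications should, after restriction to the interiors, be forced to be bi-Lipschitz by a capacity argument in which the hypothesis $\dim R>2$ is essential.

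For the forward direction, suppose $\phi\colon R\to R'$ is a quasi-isometric homeomorphism, so that the metrics are bi-Lipschitz comparable, $C^{-1}g\le\phi^{*}g'\le Cg$ (almost everywhere, by Rademacher). First I would check that pullback along $\phi$ carries Royden functions to Royden functions: the sup norms are literally equal, the comparability of the metrics bounds $|\nabla_R(f\circ\phi)|$ pointwise by a multiple of $|\nabla_{R'}f|\circ\phi$, and the comparability of the Riemannian measures bounds the Dirichlet integral $\int_R|\nabla_R(f\circ\phi)|^2\,d\mu_R$ by a multiple of $\int_{R'}|\nabla_{R'}f|^2\,d\mu_{R'}$. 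Hence $\phi^{*}\colon M(R')\to M(R)$ is a bounded algebra homomorphism, and the same estimates applied to $\phi^{-1}$ show it to be a Banach algebra isomorphism. Since the Royden compactification is by construction the Gelfand spectrum of the Royden algebra, this isomorphism induces a homeomorphism $R^{*}\cong R'^{*}$, exactly as an algebra isomorphism induces a homeomorphism of spectra in the Gelfand picture recalled in the Introduction.

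For the reverse direction, start from a homeomorphism $\Phi\colon R^{*}\to R'^{*}$. The first step is to show that $\Phi$ restricts to a homeomorphism $\phi\colon R\to R'$; this is purely topological, since a point $p\in R^{*}$ lies in $R$ if and only if it has an open neighborhood in $R^{*}$ whose closure is compact and contained in $R$ — true for interior points because $R$ is an open locally compact manifold, and false for boundary points because $R$ is dense and so no neighborhood misses the boundary. This characterization is preserved by $\Phi$, whence $\Phi(R)=R'$. The decisive second step is to show that $\phi$ is bi-Lipschitz. Here I would use the $2$-capacity of small condensers: in a manifold of dimension $n$ the capacity $\mathrm{cap}_2\bigl(\overline{B(x,r)},B(x,\rho)\bigr)$ behaves like the Euclidean model $r^{\,n-2}$ as $r\to0$. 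Because capacity is an infimum of Dirichlet integrals over admissible functions in the Royden algebra, the transfer of the Royden structure across $\Phi$ makes the capacities around $x$ and around $\phi(x)$ comparable; and since for $n>2$ the quantity $r^{\,n-2}$ is strictly monotone in the scale $r$, comparability of capacities translates into comparability of radii, that is, into a uniform bi-Lipschitz bound for $\phi$. It is precisely here that $\dim R>2$ enters: when $n=2$ the $2$-capacity of a condenser is logarithmic and scale invariant, detecting only the conformal structure, which is why the two-dimensional case yields Nakai's quasiconformal statement rather than a quasi-isometry.

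The main obstacle I anticipate is this second step, and within it two points deserve care. The first is to guarantee that a merely topological homeomorphism of the compactifications actually transports the Dirichlet-energy structure — not only the sup-norm algebra $C(R^{*})$ — closely enough that capacities are comparable; this requires identifying the Royden algebra inside $C(R^{*})$ by a property that $\Phi$ preserves and verifying that the induced isomorphism is bounded for the Royden norm. The second is the passage from comparability of capacities of balls to a genuine pointwise bi-Lipschitz estimate with uniform constants, which relies on the local comparability of Riemannian balls with Euclidean ones and on controlling the estimates uniformly in $x$. Once these are settled, combining the two directions gives the stated equivalence.
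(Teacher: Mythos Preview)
This theorem is not proved in the paper. It is stated in the Introduction as a representative result drawn from the literature (the works of Nakai~\cite{Nakai72}, Lelong-Ferrand~\cite{Ferrand73}, and Lewis~\cite{Lewis71} cited there), serving purely as motivation for the paper's own main result, Theorem~\ref{t:main theorem}, on Higson compactifications of proper metric spaces. There is therefore no proof in the paper against which to compare your proposal.

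For what it is worth, your outline is broadly in the spirit of how such theorems are established in the cited sources, and you have correctly located the real difficulty in the reverse implication: a homeomorphism of Royden compactifications yields an isomorphism $C(R'^{*})\to C(R^{*})$ of continuous-function algebras, but not \emph{a priori} an isomorphism of the Royden subalgebras $M(R')\to M(R)$ with control of the Dirichlet part of the norm, and it is precisely such control that is needed to transport capacities. In the original papers this is handled by characterizing the Royden algebra (and its ideal of functions vanishing on the boundary) intrinsically inside $C(R^{*})$, after which the capacity argument you sketch can proceed. Your proposal flags this gap honestly but does not close it; until it is closed, the reverse direction remains a plan rather than a proof.
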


In the present paper we prove an analogous theorem for metric spaces
and their coarse quasi-isometries in the sense of Gromov. The algebra
of functions that characterizes the coarse quasi-isometry type is the
Higson algebra. A Higson function (\textit{cf.}
Definition~\ref{defn:higson-function} below) on a locally compact
metric space, $(M,d)$, is a bounded Borel function, $f$, on $M$ such
that, for each $r>0$, its $r$-expansion $\nabla_r f(x)=\sup \{
|f(x)-f(y)|\mid d(x,y)\le r\}$ is in $\mathcal{B}_0(M)$, the algebra
of bounded Borel functions on $M$ that vanish at $\infty$.  The Higson
functions on $M$ form a Banach algebra, denoted by
$\mathcal{B}_\nu(M)$, and the subalgebra of continuous Higson
functions, $C_\nu(M)$, determine the so called Higson compactification
$M^\nu$ of $M$. The Higson corona (or growth) is the complement $\nu
M=M^\nu\setminus M$. Some topological properties of this Higson
compactification were studied in \cite{DranishnikovFerry97},
\cite{DKU98} and \cite{Keesling94}.

\begin{theorem}\label{t:main theorem}
  Two proper metric  spaces $(M,d)$ and $(M'd')$ are coarsely
  equivalent if and only if there is an algebraic homomorphism of
  Higson algebras $\mathcal{B}_\nu(M')\to \mathcal{B}_\nu(M)$
  that induces an isomorphism
  $C(\nu M')\to C(\nu M)$, where $\nu M$ and $\nu M'$ are the coronas
  of the respective Higson compactifications of $M$ and $M'$.
\end{theorem}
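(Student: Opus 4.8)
The plan is to prove the two implications separately, with the reverse implication carrying the substantive content. For the forward implication, suppose $\phi\colon M\to M'$ and $\psi\colon M'\to M$ are coarse maps that are coarse inverses of one another. The first task is to check that a coarse map pulls Higson functions back to Higson functions: if $g\in\mathcal{B}_\nu(M')$ and $\phi$ is bornologous, then for each $r>0$ one has $\nabla_r(g\circ\phi)(x)\le\nabla_s g(\phi(x))$ for a suitable $s=s(r)$, and properness of $\phi$ forces $\nabla_s g\circ\phi$ to lie in $\mathcal{B}_0(M)$; hence $\phi^*\colon\mathcal{B}_\nu(M')\to\mathcal{B}_\nu(M)$, $\phi^* g=g\circ\phi$, is a well-defined unital algebra homomorphism carrying $\mathcal{B}_0(M')$ into $\mathcal{B}_0(M)$. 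I would then record the \emph{closeness invariance} of this construction: if $\phi$ and $\phi'$ are close, so that $d'(\phi(x),\phi'(x))$ is uniformly bounded by some $C$, then $|g\circ\phi-g\circ\phi'|\le\nabla_C g\circ\phi\in\mathcal{B}_0(M)$, so $\phi^*$ and $\phi'^*$ agree modulo $\mathcal{B}_0(M)$ and therefore induce the same map on coronas. Applying this to $\psi\circ\phi\sim\mathrm{id}_M$ and $\phi\circ\psi\sim\mathrm{id}_{M'}$ shows that the induced maps $C(\nu M')\to C(\nu M)$ and $C(\nu M)\to C(\nu M')$ are mutually inverse, giving the required isomorphism.

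For the reverse implication, suppose we are given an algebra homomorphism $\Phi\colon\mathcal{B}_\nu(M')\to\mathcal{B}_\nu(M)$ inducing an isomorphism $C(\nu M')\to C(\nu M)$. Since the Higson algebras are commutative $C^*$-algebras (closed, self-adjoint, unital subalgebras of the bounded Borel functions), automatic continuity applies and Gelfand duality is available. The induced corona isomorphism dualizes to a homeomorphism $h\colon\nu M\to\nu M'$ of the Higson coronas. The goal is now to realize $h$ as $\nu\phi$ for an honest coarse equivalence $\phi\colon M\to M'$; once such a $\phi$ (and a corresponding coarse inverse) is produced, the forward construction shows it is a coarse equivalence.

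The essential point, and the reason the statement is framed with the Borel algebra $\mathcal{B}_\nu$ rather than the continuous algebra $C_\nu$, is that $\mathcal{B}_\nu(M)$ has an abundant supply of idempotents: an idempotent is exactly an indicator function $\mathbf{1}_A$, and $\mathbf{1}_A\in\mathcal{B}_\nu(M)$ precisely when $A$ is \emph{coarsely clopen}, i.e.\ for every $r>0$ the $r$-neighborhood of $\partial A$ is bounded. A homomorphism preserves idempotents, so $\Phi$ restricts to a Boolean homomorphism from the coarsely clopen subsets of $M'$ to those of $M$, a structure invisible to $C_\nu$ alone. I would use this Boolean homomorphism, together with a maximal $1$-separated net in $M'$ and the associated (bounded, Borel) coarse Voronoi cells, to read off a point map $\phi\colon M\to M'$: the image $\phi(x)$ is governed by the cells whose pulled-back idempotents contain $x$, with the corona homeomorphism $h$ dictating the behavior at infinity.

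The main obstacle is to verify that the map $\phi$ so produced is a genuine coarse map, that is, bornologous and proper, and is well defined up to closeness. The difficulty is that $\Phi$ is only assumed to be an abstract algebra homomorphism, so a priori the characters $\mathrm{ev}_x\circ\Phi$ on $\mathcal{B}_\nu(M')$ need not be point evaluations---for a continuous space the Borel algebra $\mathcal{B}_0(M')$ admits non-principal characters---and one must exclude pathological behavior by testing against Higson functions. Concretely, I would control the expansions $\nabla_r(\Phi g)$ in terms of $\nabla_s g$ to force uniform expansiveness of $\phi$, and use that $\Phi$ carries $\mathcal{B}_0(M')$ into $\mathcal{B}_0(M)$ (reflecting that the induced map respects the coronas) to force properness. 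Symmetrizing the whole construction with the inverse isomorphism yields a coarse map $M'\to M$ whose composites with $\phi$ induce the identity on each corona; invoking closeness invariance from the forward direction then identifies these composites with maps close to the identities, completing the proof that $M$ and $M'$ are coarsely equivalent.
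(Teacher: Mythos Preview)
Your forward direction is correct and matches the paper's treatment (Lemma~4.5 and the functoriality of the corona).

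Your reverse direction, however, has a genuine gap at precisely the point you flag as ``the main obstacle.'' You propose to define $\phi(x)$ by pulling back the indicator functions $\mathbf{1}_{V_{a'}}$ of Voronoi cells along $\Phi$ and reading off which pulled-back idempotent contains $x$. But an algebra homomorphism need not preserve the infinite orthogonal decomposition $\sum_{a'}\mathbf{1}_{V_{a'}}=1$ (this sum does not converge in norm), so the sets $W_{a'}=\{\Phi(\mathbf{1}_{V_{a'}})=1\}$ need not cover $M$; there is no evident way to assign $\phi(x)$ when $x$ lies in none of them. More seriously, your proposed verification of uniform expansiveness---``control $\nabla_r(\Phi g)$ in terms of $\nabla_s g$''---cannot work as stated: the operators $\nabla_r$ are metric, not algebraic, and an abstract $\Phi$ has no reason to intertwine them. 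You have correctly identified the difficulty but not a mechanism to overcome it.

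The paper resolves this with a different maneuver. It fixes $K$-separated $K$-nets $A\subset M$ and $A'\subset M'$, and uses a Borel partition $\{F_{x}\}_{x\in A'}$ of $M'$ (your Voronoi idea) not to define $\phi$ directly, but to build an \emph{extension} operator $P\colon C_\nu(A')\to\mathcal{B}_\nu(M')$, $Pf=\sum_{x\in A'}f(x)\chi_{F_x}$. Composing $P$ with the given $\Phi$ and then with restriction to $A$ yields a unital homomorphism $C_\nu(A')\to C_\nu(A)$ between $C^*$-algebras of \emph{continuous} functions on discrete spaces, still inducing the same corona isomorphism. Now Gelfand duality applies cleanly: one obtains a continuous map $A^\nu\to {A'}^\nu$ restricting to a homeomorphism $\nu A\to\nu A'$. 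The paper then invokes its Theorem~4.12 (together with the $G_\delta$ characterization of points of $M$ inside $M^\nu$, Proposition~4.8) to conclude that the restricted map $A\to A'$ is a coarse equivalence. The passage to discrete nets is the key device you are missing: it converts the Borel-algebra problem into a continuous-algebra problem where spectra are honest compactifications, and it offloads all the metric verification (expansiveness, properness, dense image) onto the already-proved Theorem~4.12 rather than attempting to extract it from $\Phi$ directly.
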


The ``only if'' part of Theorem~\ref{t:main theorem} has no version
with continuous Higson functions, which justifies the use of Borel
ones.  For instance, $\mathbf{Z}$ and $\mathbf{R}$ are coarsely
equivalent, but any continuous map $\mathbf{R}\to\mathbf{Z}$ is
constant, and therefore no homomorphism $C_\nu(\mathbf{Z})\to
C_\nu(\mathbf{R})$ induces an isomorphism $C(\nu\mathbf{Z})\to
C(\nu\mathbf{R})$.

Other geometric properties of metric spaces have been shown to have a
purely algebraic characterization; one example of such properties is
illustrated by recent work of Bourdon~\cite{Bourdon07}. To each metric
space he associates an algebra of functions based on a Besov norm, and
then he proves that two metric spaces are homeomorphic via a
quasi-Moebius homeomorphism if and only if those algebras are
isomorphic.

It appears of interest to analyze what other geometric structures on
a metric space can be inferred from naturally defined Banach algebras
of functions on it.

We thank the referee for valuable comments that helped correct the
presentation of this paper.

\section{Coarse quasi-isometries}\label{s:coarse q.-i.}

Let $(M,d)$ and $(M',d')$ be arbitrary metric spaces.  A mapping
$f:M\to M'$ is said to be {\em Lipschitz\/} if there is some $C>0$
such that
\[
d'(f(x),f(y))\le C\,d(x,y)
\]
for all $x,y\in M$. Any such constant $C$ is called a {\em Lipschitz
  distortion\/} of $f$. The map $f$ is said to be {\em bi-Lipschitz\/}
when there is some $C\ge1$ such that
$$
\frac{1}{C}\,d(x,y)\le d'(f(x),f(y))\le C\,d(x,y)
$$
for all $x,y\in M$. In this case, the constant $C$ will be called a {\em
bi-Lipschitz distortion\/} of $f$.

A family of Lipschitz maps is called {\em equi-Lipschitz\/} when all
the maps in it have some common Lipschitz distortion. A family of
bi-Lipschitz maps is said to be {\em equi-bi-Lipschitz\/} when all of
its maps have some common bi-Lipschitz distortion, which is called an
{\em equi-bi-Lipschitz distortion\/}.

A {\em net\/} \index{net} in a metric space $(M,d)$ is a subset
$A\subset M$ such that $d(x,A)\le K$ for some $K>0$ and all $x\in M$.
On the other hand, a subset $A$ of $M$ is said to be {\em separated\/}
when there is some $\delta>0$ such that $d(x,y)>\delta$ for every pair
of different points $x,y\in A$. The terms $K$-net and
$\delta$-separated net will be also used.

\begin{lemma}\label{l:separated net} Let $K>0$. There is some
  $K$-separated $K$-net in $M$. Moreover any $K$-net of $M$ contains a
  $K$-separated $2K$-net of $M$.
\end{lemma}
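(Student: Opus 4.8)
The plan is to produce both nets by a single device, a maximal $K$-separated set obtained from Zorn's lemma. I would order the collection $\mathcal{S}$ of all $K$-separated subsets of $M$ by inclusion. This collection is nonempty (every singleton belongs to it, the separation condition holding vacuously), and the union of any chain in $\mathcal{S}$ is again $K$-separated, since any two distinct points of the union already lie together in some single member of the chain, where their distance exceeds $K$. Hence Zorn's lemma yields a maximal element $A\in\mathcal{S}$.

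For the first assertion I would show that such a maximal $A$ is automatically a $K$-net. The key observation is that the condition defining $K$-separation, namely $d(x,y)>K$, and the condition $d(x,A)\le K$ defining the $K$-net property are logically complementary. Indeed, if some $x\in M$ satisfied $d(x,A)>K$, then $d(x,a)>K$ for every $a\in A$, so $A\cup\{x\}$ would still be $K$-separated, contradicting maximality. Therefore $d(x,A)\le K$ for all $x\in M$, which is precisely the statement that $A$ is a $K$-net.

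For the second assertion, let $B$ be an arbitrary $K$-net of $M$ and repeat the construction inside $B$, taking a maximal $K$-separated subset $A\subseteq B$. Maximality now gives, for each $b\in B$, some $a\in A$ with $d(b,a)\le K$, so $d(b,A)\le K$ holds on all of $B$. To pass from $B$ to $M$ I would invoke the triangle inequality: given $x\in M$, choose $b\in B$ with $d(x,b)\le K$ (possible since $B$ is a $K$-net), and then $a\in A$ with $d(b,a)\le K$; adding these yields $d(x,a)\le 2K$, so $A$ is a $K$-separated $2K$-net of $M$, as required.

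The argument is essentially routine, being a standard application of Zorn's lemma; the only point demanding care is the boundary case $d(x,a)=K$, which must be counted as a failure of $K$-separation (strict inequality) yet still qualifies a point for a $K$-net (non-strict inequality). It is exactly this complementarity between strict and non-strict inequalities that makes the maximality argument close, and the factor of $2$ in the second part is the unavoidable cost of relaying through the intermediate net $B$ by the triangle inequality.
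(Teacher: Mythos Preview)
Your proof is correct and follows essentially the same approach as the paper: a maximal $K$-separated subset via Zorn's lemma, then maximality forces the $K$-net property, and for the second part one repeats the construction inside the given net and finishes with the triangle inequality. Your write-up is in fact a bit more explicit than the paper's, which omits the verification that chains have upper bounds and leaves the triangle-inequality step as ``it easily follows.''
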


\begin{proof}
  Let $\mathcal S$ be the family of $K$-separated subsets of $M$. By
  using Zorn's lemma, it follows that there exists a maximal element
  $A\in{\mathcal S}$. If $d(x,A)>K$ for some $x\in M$, then
  $A\cup\{x\}\in{\mathcal S}$, contradicting the maximality of $A$.
  Hence $A$ is a $K$-net in $M$.

  Let $A$ be a $K$-net for $M$. The above shows that there is a
  $K$-separated $K$-net $B$ for the metric space $A$. It easily
  follows that $B$ is a $2K$-net for $M$.
\end{proof}

The concept of coarse quasi-isometry was introduced by
M.~Gromov~\cite{Gromov93} as follows.\footnote{It is also called {\em
    rough isometry\/} in the context of potential theory
  \cite{Kanai85}} A {\em coarse quasi-isometry\/} between metric
spaces $(M,d)$ and $(M',d')$ is a bi-Lipschitz bijection between some
nets $A\subset M$ and $A'\subset M'$; in this case, $M$ and $M'$ are
said to have the same {\em coarse quasi-isometry type\/} or to be {\em
  coarsely quasi-isometric\/}.  A coarse quasi-isometry between $M$
and itself will be called a {\em coarse quasi-isometric
  transformation\/} of $M$.  For some $K>0$ and $C\ge1$, the pair
$(K,C)$ is said to be a {\em coarsely quasi-isometric distortion\/} of
a coarse quasi-isometry if it is a bi-Lipschitz bijection between
$K$-nets with bi-Lipschitz distortion $C$. A family of {\em
  equi-coarse quasi-isometries\/} is a collection of coarse
quasi-isometries that have a common coarse distortion.

Two coarse quasi-isometries between $(M,d)$ and $(M',d')$, say $f:A\to
A'$ and $g:B\to B'$, are {\em close\/} if there are some $r,s>0$ such
that
\begin{gather*}
  d(x,B)\le r \,,\,d(y,A)\le r \;,\\
  d(x,y)\le r\Longrightarrow d'(f(x),g(y))\le s\;,
\end{gather*}
for all $x\in A$ and $y\in B$.  (Such coarse quasi-isometries $f$ and
$g$ are said to be {\em $(r,s)$-close\/}.)

It is well known that ``being coarsely quasi-isometric'' is an
equivalence relation on metric spaces. Indeed, this is a consequence
of the fact that the ``composite'' of coarse quasi-isometries makes
sense up to closeness~\cite{AlvarezLopezCandelXX}.

The equivalence classes of the closeness relation on coarse
quasi-isometries between metric spaces form a category of
isomorphisms. This is indeed the subcategory of isomorphisms of the
following larger category. For any set $S$ and a metric space $M$,
with metric $d$, two maps $f,g:S\to M$ are said to be {\em
  close\/}\footnote{This term is used in \cite{HigsonRoe00}. Other
  terms used to indicate the same property are {\em coarsely
    equivalent\/} \cite{Roe96}, {\em parallel\/} \cite{Gromov93}, {\em
    bornotopic\/} \cite{Roe93}, and {\em uniformly close\/}
  \cite{BlockWeinberger92}.} \index{close!maps} when there is some
$R>0$ such that $d(f(x),g(x))\le R$ for all $x\in S$; it may be also
said that these maps are {\em $R$-close\/}. If $(M',d')$ is another
metric space, a (not necessarily continuous) map $f:M\to M'$ is said
to be {\em large scale Lipschitz\/} \cite{Gromov93} if there are
constants $\lambda\ge1$ and $c>0$ such that
$$
d'(f(x),f(y))\le\lambda\,d(x,y)+c
$$
for all $x,y\in M$; in this case, the pair $(\lambda,c)$ will be
called a {\em large scale Lipschitz distortion\/} of $f$.  The map $f$
is said to be {\em large scale bi-Lipschitz\/} if there are constants
$\lambda\ge1$ and $c>0$ such that
\[
\frac{1}{\lambda}\,d(x,y)-c\le d'(f(x),f(y))\le\lambda\,d(x,y)+c
\]
for all $x,y\in M$; in this case, the pair $(\lambda,c)$ will be
called a {\em large scale bi-Lipschitz distortion\/} of $f$.  The map
$f$ will be called a {\em large scale Lipschitz equivalence\/} if it
is large scale Lipschitz and if there is another large scale Lipschitz
map $g:M'\to M$ so that $g\circ f$ and $f\circ g$ are close to the
identity maps on $M$ and $M'$, respectively.  In this case, if
$(\lambda,c)$ is a large scale Lipschitz distortion of $f$ and $g$,
and $g\circ f$ and $f\circ g$ are $R$-close to the identity maps for
some $R>0$, then $(\lambda,c,R)$ will be called a {\em large scale
  Lipschitz equivalence distortion\/} of $f$. A large scale Lipschitz
equivalence is easily seen to be large scale bi-Lipschitz.

It is well known that two metric spaces are coarsely quasi-isometric if and
only if they are isomorphic in the category of metric spaces and closeness
equivalence classes of large scale Lipschitz maps; this is part of the
content of the following two results, where the constants involved are
specially analyzed.

\begin{proposition}\label{p:large scale Lipschitz extensions} Let $f:A\to A'$
  be any coarse quasi-isometry between metric spaces $(M,d)$ and
  $(M',d')$ with coarse distortion $(K,C)$. Then $f$ is induced by a
  large scale Lipschitz equivalence $\varphi:M\to M'$ with large scale
  Lipschitz equivalence distortion $(C,2CK,K)$.
\end{proposition}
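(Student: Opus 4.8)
The plan is to extend the bi-Lipschitz bijection $f:A\to A'$ between the $K$-nets $A$ and $A'$ to a map $\varphi:M\to M'$ defined on all of $M$, by sending each point $x\in M$ to the image under $f$ of a chosen nearest net point. Concretely, I would fix a function $\pi:M\to A$ assigning to each $x\in M$ some $\pi(x)\in A$ with $d(x,\pi(x))\le K$ (possible since $A$ is a $K$-net), and set $\varphi=f\circ\pi$. The inverse quasi-isometry $f^{-1}:A'\to A$ is itself a coarse quasi-isometry with the same distortion data, so applying the same recipe produces a map $\psi:M'\to M$, $\psi=f^{-1}\circ\pi'$, where $\pi':M'\to A'$ is a nearest-point projection with $d'(y,\pi'(y))\le K$. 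These $\varphi$ and $\psi$ will be the candidate large scale Lipschitz equivalence and its inverse.

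The first computational step is to verify that $\varphi$ is large scale Lipschitz with distortion $(C,2CK)$. For $x,y\in M$, I would estimate
\[
d'(\varphi(x),\varphi(y))=d'(f(\pi(x)),f(\pi(y)))\le C\,d(\pi(x),\pi(y)),
\]
using the bi-Lipschitz upper bound for $f$, and then bound $d(\pi(x),\pi(y))\le d(x,y)+2K$ by the triangle inequality together with $d(x,\pi(x)),d(y,\pi(y))\le K$. This yields $d'(\varphi(x),\varphi(y))\le C\,d(x,y)+2CK$, giving the claimed distortion $(\lambda,c)=(C,2CK)$. The same argument applies verbatim to $\psi$, so both maps are large scale Lipschitz with the required constants.

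The main step, and the place where the constant $K$ in the equivalence distortion must be tracked carefully, is to show that $\psi\circ\varphi$ and $\varphi\circ\psi$ are $K$-close to the respective identity maps, and that $\varphi$ induces $f$. To see that $\varphi$ induces $f$, note that for $a\in A$ we may choose $\pi(a)=a$, so $\varphi|_A=f$. For the closeness of $\psi\circ\varphi$ to $\mathrm{id}_M$, I would first observe that for any $x\in M$, the point $\varphi(x)=f(\pi(x))$ lies in $A'$, so its nearest-point projection may be taken to be itself, i.e.\ $\pi'(\varphi(x))=\varphi(x)=f(\pi(x))$. Hence
\[
\psi(\varphi(x))=f^{-1}(\pi'(\varphi(x)))=f^{-1}(f(\pi(x)))=\pi(x),
\]
and therefore $d(\psi(\varphi(x)),x)=d(\pi(x),x)\le K$. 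The symmetric computation gives $d'(\varphi(\psi(y)),y)\le K$ for $y\in M'$. Thus both composites are $K$-close to the identities, and $R=K$ serves as the closeness constant.

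The only delicacy worth flagging is the consistency of the projection choices: the argument above relies on being able to take $\pi$ to fix points of $A$ and $\pi'$ to fix points of $A'$, and on $\varphi$ landing inside $A'$ so that $\pi'\circ\varphi$ simplifies. Since $\varphi(x)\in A'$ always (being a value of $f$), this is automatic and requires no compatibility hypothesis beyond the freedom to define $\pi,\pi'$ pointwise. Assembling the three estimates, $\varphi$ is large scale Lipschitz with distortion $(C,2CK)$, admits the large scale Lipschitz inverse $\psi$ with the same distortion, and the composites are $K$-close to the identities, so $(\lambda,c,R)=(C,2CK,K)$ is a large scale Lipschitz equivalence distortion of $\varphi$, which induces $f$ as required.
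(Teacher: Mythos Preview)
Your proof is correct and follows essentially the same approach as the paper: define nearest-net projections $\pi:M\to A$, $\pi':M'\to A'$ fixing points of $A$ and $A'$ respectively, set $\varphi=f\circ\pi$ and $\psi=f^{-1}\circ\pi'$, and verify the large scale Lipschitz bound via the triangle inequality and the $K$-closeness of the composites via $\psi\circ\varphi=\pi$. Your write-up is in fact slightly more explicit than the paper's about why $\pi'\circ\varphi=\varphi$ (since $\varphi$ lands in $A'$), but the underlying argument is identical.
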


\begin{proof}
  For each $x\in M$ and $x'\in M'$, choose points $h(x)\in A$ and
  $h'(x)\in A'$ such that $d(x,h(x))\le K$ and $d'(x',h'(x'))\le K$.
  Moreover assume that $h(x)=x$ for all $x\in A$, and that $h'(x')=x'$
  for all $x'\in A'$.  Then $f$ and $f^{-1}$ are respectively induced
  by the maps $\varphi=f\circ h:M\to M'$ and $\psi=f^{-1}\circ h':M'\to
  M$. For all $x,y\in M$,
\begin{align*}
  d'(\varphi(x),\varphi(y))&\le C\,d(h(x),h(y))\\
  &\le C\,d(h(x),x)+C\,d(x,y)+C\,d(y,h(y))\\
  &\le C\,d(x,y)+2CK\;;
\end{align*}
furthermore
\[
d(x,\psi\circ\varphi(x))=d(x,h(x))\le K \;.
\]
Similarly,
\begin{gather*}
d(\psi(x'),\psi(y'))\le C\,d'(x',y')+2CK\;,\\
d(x',\varphi\circ\psi(x'))\le K\;,
\end{gather*}
for all $x',y'\in M'$, and the result follows.
\end{proof}

\begin{proposition}\label{p:restrictions of large scale Lipschitz maps} Let
  $\varphi:M\to M'$ be a large scale Lipschitz equivalence with large scale
  Lipschitz equivalence distortion $(\lambda,c,R)$. Then, for each
  $\varepsilon>0$, the map $\varphi$ induces a coarse quasi-isometry $f:A\to
  A'$ between $M$ and $M'$ with coarse distortion
\[
( R + 2 \lambda R + \lambda c + \lambda \varepsilon+c\,,\,
\lambda(1+\frac{2R+c}{\varepsilon}) )\;.
\]
\end{proposition}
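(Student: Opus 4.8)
The plan is to realize the induced coarse quasi-isometry as the restriction of $\varphi$ to a separated net whose separation is reverse-engineered from the prescribed distortion $C$. Let $g:M'\to M$ be a large scale Lipschitz inverse for $\varphi$, with $(\lambda,c)$ a large scale Lipschitz distortion of both $\varphi$ and $g$ and with $g\circ\varphi$ and $\varphi\circ g$ being $R$-close to the respective identities. I would set $\delta=2R+c+\varepsilon$ and, by Lemma~\ref{l:separated net}, fix a $\delta$-separated $\delta$-net $A\subset M$. Then I would take $f=\varphi|_A$ and $A'=\varphi(A)$, and verify that $f:A\to A'$ is a bijection between $K$-nets that is bi-Lipschitz with distortion $C$.

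First I would extract the lower large scale estimate implicit in ``large scale Lipschitz equivalence''. For all $x,y\in M$, the triangle inequality with $d(x,(g\circ\varphi)(x))\le R$, $d(y,(g\circ\varphi)(y))\le R$ and the large scale Lipschitz bound for $g$ yields $d(x,y)\le\lambda\,d'(\varphi(x),\varphi(y))+2R+c$, that is, $d'(\varphi(x),\varphi(y))\ge\frac1\lambda\,d(x,y)-\frac{2R+c}\lambda$. Together with the upper bound $d'(\varphi(x),\varphi(y))\le\lambda\,d(x,y)+c$, this is all the metric input required.

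Next I would check the two bi-Lipschitz inequalities on distinct $x,y\in A$, for which $d(x,y)>\delta$. For the upper one, $d(x,y)>\varepsilon\ge\frac{c\varepsilon}{\lambda(2R+c)}$ gives $c\le(C-\lambda)\,d(x,y)$, hence $d'(f(x),f(y))\le\lambda\,d(x,y)+c\le C\,d(x,y)$. For the lower one, a short computation gives $\frac1\lambda-\frac1C=\frac1\lambda\cdot\frac{2R+c}{\varepsilon+2R+c}$, so that $d'(f(x),f(y))\ge\frac1C\,d(x,y)$ is equivalent to $d(x,y)\ge\varepsilon+2R+c=\delta$, which holds by separation; in particular $f$ is injective and hence a bijection onto $A'$. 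The decisive point is that $\delta$ has been chosen exactly so that the lower estimate saturates at $1/C$.

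Finally I would confirm the net constants. Since $A$ is a $\delta$-net and $\delta\le K$ (using $\lambda\ge1$), $A$ is a $K$-net of $M$. For $A'$, given $x'\in M'$ set $z=g(x')$; then $d'(x',\varphi(z))=d'(x',(\varphi\circ g)(x'))\le R$, and choosing $a\in A$ with $d(z,a)\le\delta$ gives $d'(\varphi(z),\varphi(a))\le\lambda\delta+c$, so $d'(x',\varphi(a))\le R+\lambda\delta+c=R+2\lambda R+\lambda c+\lambda\varepsilon+c=K$; thus $A'$ is a $K$-net of $M'$. Hence $f=\varphi|_A$ is a coarse quasi-isometry with coarse distortion $(K,C)$, induced by $\varphi$. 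The one real obstacle is the choice of $\delta$: it must be large enough to force injectivity and the lower estimate with the exact constant $C$, while simultaneously producing, through the net property of $\varphi(A)$, precisely the advertised value of $K$; the value $\delta=2R+c+\varepsilon$ is what reconciles these two requirements.
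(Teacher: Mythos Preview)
Your proof is correct and follows essentially the same approach as the paper: choose a $(2R+c+\varepsilon)$-separated $(2R+c+\varepsilon)$-net $A$ in $M$, set $f=\varphi|_A$, derive the two-sided large scale bound $d(x,y)\le\lambda\,d'(\varphi(x),\varphi(y))+2R+c$ from the closeness of $g\circ\varphi$ to the identity, and then verify bijectivity, the bi-Lipschitz constants, and the net constants exactly as you do. The only cosmetic differences are in the arithmetic bookkeeping for the bi-Lipschitz bounds (the paper bounds the additive constants by multiples of $d$ or $d'$ directly, whereas you compute $1/\lambda-1/C$), and your phrase ``is equivalent to $d(x,y)\ge\delta$'' should really read ``is implied by'', but the argument is sound.
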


\begin{proof}
  Let $\psi:M'\to M$ be a large scale Lipschitz map with large scale
  Lipschitz distortion $(\lambda,c)$ such that $\psi\circ\varphi$ and
  $\varphi\circ\psi$ are $R$-close to the identity maps on $M$ and $M'$,
  respectively.

  By Lemma~\ref{l:separated net}, there is a
  $(2R+c+\varepsilon)$-separated $(2R+c+\varepsilon)$-net $A$ of
  $M$. Let $A'=\varphi(A)$, and let $f:A\to A'$ denote the restriction
  of $\varphi$.  For all $x,y\in M$, the inequality
  \[
  d(x,y)\le d(x,\psi\circ\varphi(x))+d(\psi\circ\varphi(x),\psi\circ\varphi(y))
  +d(\psi\circ\varphi(y),y)
  \]
  implies
  \begin{equation}\label{e:d(x,y) le ...}
    d(x,y)\le\lambda\,d'(\varphi(x),\varphi(y))+2R+c\;.
  \end{equation}
  In particular, if $\varphi(x)=\varphi(y)$, then $d(x,y)\le2R+c$. Therefore
  $f$ is bijective because $A$ is $(2R+c)$-separated.

  For any $x'\in M'$, there is some $x\in A$ such that
  $d(x,\psi(x'))\le2R+c+\varepsilon$. Then
\begin{align*}
  d'(x',\varphi(x))&\le d'(x',\varphi\circ\psi(x'))+d'(\varphi\circ\psi(x'),\varphi(x))\\
  &\le R+\lambda\,d(\psi(x'),x)+c\\
  &\le R+2\lambda R+\lambda c+\lambda\varepsilon+c\;.
\end{align*}
So $A'$ is a $(R+2\lambda R+\lambda c+\lambda\varepsilon+c)$-net of $M'$.

Because $A$ is $(2R+c+\varepsilon)$-separated, if $x,y\in A$ are
distinct, then
\[
d'(f(x),f(y))\le \lambda\,d(x,y)+c
\le(\lambda+\frac{c}{2R+c+\varepsilon})\,d(x,y) .
\]
By the same reason and~\eqref{e:d(x,y) le ...}, it follows that
$d'(f(x),f(y))>\varepsilon/\lambda$. Hence
\[
d(x,y)\le\lambda\,d'(f(x),f(y))+2R+c
\le\lambda(1+\frac{2R+c}{\varepsilon})\,d'(f(x),f(y))
\]
again by~\eqref{e:d(x,y) le ...}, which finishes the proof.
\end{proof}

\section{Coarse structures}

The concept of coarse structure was introduced in Roe~\cite{Roe96},
and further developed in Higson-Roe~\cite{HigsonRoe00}, as a
generalization of the concept of the closeness relation on maps from a set
into a metric space. The basic definitions and results pertaining to
coarse structures are recalled presently.

\begin{definition}\label{defn:coarse-structure}
  A {\em coarse structure\/} on a set $X$ is a correspondence that
  assigns to each set $S$ an equivalence relation (called ``being {\em
    close\/}'') on the set of maps $S\to X$ such that the following
  compatibility conditions are satisfied:
\begin{enumerate}[(i)]

\item if $p,q:S\to X$ are close and $h:S'\to S$ is any map, then
  $p\circ h$ and $q\circ h$ are close;

\item if $S=S'\cup S''$ and if $p,q:S\to X$ are maps whose
  restrictions to both $S'$ and $S''$ are close, then $p$ and $q$ are
  close; and

\item all constant maps $S\to X$ are close to each other.

\end{enumerate}

  A set endowed with a coarse structure is called a {\em coarse space\/}.
\end{definition}

\begin{definition}
Let $X$ be a coarse space. A subset $E\subset X\times X$ is called
{\em controlled\/}\footnote{Controlled sets are called {\em
    entourages\/} in Roe~\cite{Roe96}.} if the restrictions to $E$ of
the two factor projections $X\times X\to X$ are close.
\end{definition}

The coarse structure of a coarse space $X$ is determined by its
controlled sets: two maps $p,q:S\to X$ are close if and only if the
image of $(p,q):S\to X\times X$ is controlled. Thus a coarse structure
can be also defined in terms of its controlled sets \cite{Roe96},
\cite{HigsonRoe00}.

A subset $B\subset X$ is called {\em bounded\/} if
$B\times B$ is controlled, equivalently, if the inclusion
mapping $B\hookrightarrow X$ is close to a constant mapping.  More generally,
a collection $\mathcal{U}$ of subsets of $X$ is said to be {\em
  uniformly bounded\/} if $\bigcup_{U\in\mathcal{U}}U\times U$ is
controlled. The coarse space $X$ is called {\em separable\/} if it has a
 countable uniformly bounded cover.

\begin{definition}\label{defn:coarse-map}
  A mapping $f:X\to X'$ between coarse spaces is called a {\em coarse
    map\/} \index{coarse!map} if
  \begin{enumerate}[(i)]

  \item\label{defn:coarse-map-1} whenever $p,q:S\to X$ are close maps,
    the composites $f\circ p,f\circ q:S\to X'$ are close maps; and

  \item if $B$ is a bounded subset of $X'$, then $f^{-1}(B)$ is
    bounded in $X$.

  \end{enumerate}
\end{definition}

Two coarse spaces, $X$ and $X'$, are \textit{coarsely equivalent} if
there are coarse mappings $f:X\to X'$ and $g:X'\to X$ such that
$f\circ g$ is close to the identity of $X'$ and $g\circ f$ is close to
the identity of $X$. In this case, $f$ (and $g$) are called coarse
equivalences.  The \textit{coarse category} is the category whose
objects are coarse spaces and whose morphisms are equivalence classes
of coarse mappings, two mappings being equivalent if they are close.

\begin{definition}\label{defn:uniformly coarse mapping} Let $X$, $X'$ be
  coarse spaces. A mapping $\varphi:X\to X'$ is \textit{uniformly coarse} if
\begin{enumerate}[(i)]
\item for every controlled set $E\subset X\times X$, the image
  $(\varphi\times \varphi)(E) \subset X'\times X'$ is controlled, and
\item for every controlled set $F\subset X'\times X'$, the preimage
  $(\varphi\times \varphi)^{-1}(F) \subset X\times X$ is controlled.
\end{enumerate}
\end{definition}

\begin{proposition}\label{prop:coarse equivalence is uniform} Let $X$ and
  $X'$ be coarse spaces, and let $\varphi:X\to X'$ and $\psi:X'\to X$
  be mappings satisfying \textup{(\ref{defn:coarse-map-1})} of
  Definition~\ref{defn:coarse-map}, and such that $\psi\circ \varphi$
  is close to the identity of $X$ and $\varphi\circ \psi$ is close to
  the identity of $X'$. Then $\varphi$ and $\psi$ are uniformly coarse
  \textup{(}and consequently $X$ and $X'$ are coarsely
  equivalent\textup{)}.
\end{proposition}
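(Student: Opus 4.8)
The plan is to verify the two clauses of Definition~\ref{defn:uniformly coarse mapping} for $\varphi$ directly from the characterization of ``controlled'' in terms of closeness, and then to invoke the symmetry of the hypotheses (which are unchanged under interchanging $\varphi$ with $\psi$ and $X$ with $X'$) to obtain the same conclusion for $\psi$. Throughout I would write $\mathrm{pr}_1,\mathrm{pr}_2$ for the two factor projections of $X\times X$ onto $X$, and likewise for $X'\times X'$; the point to keep in mind is that a set $E\subset X\times X$ is controlled precisely when $\mathrm{pr}_1|_E$ and $\mathrm{pr}_2|_E$ are close as maps $E\to X$, and that the image of the pairing $(\mathrm{pr}_1|_E,\mathrm{pr}_2|_E)$ is $E$ itself. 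All the work then consists in transporting closeness of coordinate projections across $\varphi$ and $\psi$.

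For clause (i), let $E\subset X\times X$ be controlled, so that $\mathrm{pr}_1|_E$ and $\mathrm{pr}_2|_E$ are close. Since $\varphi$ satisfies condition~(\ref{defn:coarse-map-1}) of Definition~\ref{defn:coarse-map}, the composites $\varphi\circ\mathrm{pr}_1|_E$ and $\varphi\circ\mathrm{pr}_2|_E$ are close maps $E\to X'$. The image of their pairing is exactly $\{(\varphi(x_1),\varphi(x_2)):(x_1,x_2)\in E\}=(\varphi\times\varphi)(E)$, so that set is controlled. This direction is essentially immediate.

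Clause (ii) is the substantive step; I write $\sim$ for ``close''. Given a controlled $F\subset X'\times X'$, set $E=(\varphi\times\varphi)^{-1}(F)$ and let $\Phi:E\to F$ be the restriction of $\varphi\times\varphi$. Precomposing the (close) projections $\mathrm{pr}_1|_F,\mathrm{pr}_2|_F$ with $\Phi$ and using compatibility axiom~(i) of a coarse structure shows that $\varphi\circ\mathrm{pr}_1|_E\sim\varphi\circ\mathrm{pr}_2|_E$. Applying condition~(\ref{defn:coarse-map-1}) now to $\psi$ gives $\psi\circ\varphi\circ\mathrm{pr}_1|_E\sim\psi\circ\varphi\circ\mathrm{pr}_2|_E$. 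Finally I would invoke the hypothesis $\psi\circ\varphi\sim\mathrm{id}_X$: precomposing this closeness with $\mathrm{pr}_1|_E$ and with $\mathrm{pr}_2|_E$ yields $\psi\circ\varphi\circ\mathrm{pr}_i|_E\sim\mathrm{pr}_i|_E$ for $i=1,2$. Transitivity of the closeness relation then produces the chain $\mathrm{pr}_1|_E\sim\psi\circ\varphi\circ\mathrm{pr}_1|_E\sim\psi\circ\varphi\circ\mathrm{pr}_2|_E\sim\mathrm{pr}_2|_E$, so $\mathrm{pr}_1|_E$ and $\mathrm{pr}_2|_E$ are close and $E$ is controlled. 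I expect this transitivity chain---the only place where the hypothesis $\psi\circ\varphi\sim\mathrm{id}_X$ enters---to be the crux of the argument; everything else is a mechanical unwinding of the definitions.

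For the parenthetical conclusion, I would observe that a uniformly coarse map is in particular a coarse map: it satisfies condition~(\ref{defn:coarse-map-1}) of Definition~\ref{defn:coarse-map} by hypothesis, and the remaining condition holds because for a bounded $B\subset X'$ one has $\varphi^{-1}(B)\times\varphi^{-1}(B)=(\varphi\times\varphi)^{-1}(B\times B)$, which is controlled by clause~(ii) of uniform coarseness, so $\varphi^{-1}(B)$ is bounded. Since the same applies to $\psi$, and since $\psi\circ\varphi$ and $\varphi\circ\psi$ are close to the respective identities by hypothesis, $\varphi$ and $\psi$ are mutually inverse coarse equivalences, whence $X$ and $X'$ are coarsely equivalent.
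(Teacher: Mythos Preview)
Your proof is correct and follows essentially the same route as the paper's. Both arguments establish clause~(ii) by pushing the controlled set $F$ through $\psi$ (or, equivalently, first pulling back along $\varphi\times\varphi$ and then applying $\psi$) to obtain $\psi\circ\varphi\circ\mathrm{pr}_1|_E\sim\psi\circ\varphi\circ\mathrm{pr}_2|_E$, and then invoke $\psi\circ\varphi\sim\mathrm{id}_X$ together with transitivity of closeness; the only difference is that the paper applies $\psi\times\psi$ to $F$ first and then precomposes with $\varphi\times\varphi$, whereas you precompose with $\Phi$ first and then apply $\psi$, which amounts to the same computation in a different order. Your treatment of the parenthetical conclusion is also more explicit than the paper's one-line remark that (ii) of Definition~\ref{defn:uniformly coarse mapping} implies (ii) of Definition~\ref{defn:coarse-map}, but the content is the same.
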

\begin{proof}
  It is plain that (i) of Definition~\ref{defn:uniformly coarse mapping} is
  equivalent to (i) of Definition~\ref{defn:coarse-map},
  and that (ii) of Definition~\ref{defn:uniformly coarse mapping}
  implies (ii) of Definition~\ref{defn:coarse-map}.

  Let $p_1$ and $p_2$ denote the projection mappings $X\times X\to X$.
  If $F\subset X'\times X'$ is controlled, then $(\psi\times
  \psi)(F)\subset X\times X$ is controlled, and so the mappings
  $p_1\circ (\psi\times \psi)$ and $p_2\circ (\psi\times \psi)$ of $F$
  into $X$ are close. Therefore, the mappings $p_1\circ
  (\psi\times\psi)\circ (\varphi\times \varphi)=p_1\circ (\psi\circ
  \varphi\times \psi\circ \varphi )$ and $ p_2\circ (\psi\times
  \psi)\circ (\varphi\times \varphi)= p_2\circ (\psi\circ
  \varphi\times \psi\circ \varphi )$ from $(\varphi\times
  \varphi)^{-1} (F)$ into $X$ are also close.  Since $\psi\circ
  \varphi$ is close to the identity on $X$, the mappings $p_1$ and
  $p_2$ from $(\varphi\times \varphi)^{-1} (F)$ into $X$ are also
  close, establishing property (ii) of Definition~\ref{defn:uniformly
    coarse mapping} for $\varphi$.
\end{proof}

\begin{definition}\label{defn:proper coarse space}
A coarse structure on a set $X$ is said to be a {\em proper coarse
  structure\/} if
\begin{enumerate}[(i)]

\item $X$ is equipped with a locally compact Hausdorff topology;

\item $X$ has a uniformly bounded open cover; and

\item every bounded subset of $X$ has compact closure.

\end{enumerate}
\end{definition}

A set equipped with a proper coarse structure will be called a {\em
  proper coarse space\/}.  Note that bounded subsets of a proper
coarse space are those subsets with compact closure.

A metric space, $(M,d)$, has a natural coarse structure, that is
defined by declaring two maps $f,g:S\to M$ (where $S$ is any set) to
be {\em close\/} when $\sup\{d(f(s),g(s))\ |\ s\in S\}<\infty$. This
closeness relation defines a coarse structure on $M$, which is called
its {\em metric\/}\footnote{This term is taken from
  \cite{HigsonRoe00}. It is also called {\em bounded\/} coarse
  structure in \cite{Roe96}} coarse structure. The terms {\em metric
  closeness\/} and {\em metric controlled set\/} can be used in this
case.  This coarse structure is proper if and only if the metric space
$M$ is proper in the sense that its closed balls are compact. In the
case of metric coarse structures, the above abstract coarse notions
have their usual meanings for metric spaces.

More generally, following Hurder~\cite{Hurder94}, a {\em coarse
  distance\/} (or {\em coarse metric\/}) on a set $X$ is a symmetric
map $d:X\times X\to[0,\infty)$ satisfying the triangle inequality; in
this case, $(X,d)$ is called a {\em coarse metric space}. Any coarse
distance defines a coarse structure in the same way as a metric does,
and will be also called a {\em metric\/} coarse structure. In this
section and the above one, all notions and properties are given for
metric spaces for simplicity, but they have obvious versions for
coarse metric spaces.

\begin{definition}\label{defn:coarse metric}
If $(M,d)$ and $(M',d')$ are metric spaces, the two conditions of
Definition~\ref{defn:coarse-map} on a map $f:M\to M'$ to be coarse can
be written as follows:
\begin{enumerate}[(i)]

\item ({\em Uniform expansiveness.\/}\footnote{This name comes
    from Roe~\cite{Roe96}. Other terms used to denote the same
    property are {\em uniformly bornologous\/} Roe~\cite{Roe93} and
    {\em coarsely Lipschitz\/}
    Block-Weinberger~\cite{BlockWeinberger92}.})  For each $R>0$ there
  is some $S>0$ such that
\[
d(x,z)\le R\Longrightarrow d'(f(x),f(z))\le S
\]
for all $x,z\in M$.

\item ({\em Metric properness.\/}\footnote{This term is used in
Roe~\cite{Roe96}}) For each bounded
subset $B\subset M'$, the inverse image
$f^{-1}(B)$ is bounded in $M$.
\end{enumerate}
\end{definition}

The last property admits a uniform version: a map $f:M\to M'$ is said
to be {\em uniformly metrically proper\/}\footnote{This term is used in
  \cite{Roe96}. Another term used to denote the same property is {\em
    effectively proper\/} \cite{BlockWeinberger92}.}  if for each
$R>0$ there is some $S>0$ so that
\[
d'(f(x),f(z)) \le R \Longrightarrow d(x,z) \le S
\]
for all $x,z$ in $M$. By using uniform metric properness instead of
metric properness, we get what is called the {\em rough category\/}.
More precisely, a map between metric spaces, $f:M\to M'$, is called a
{\em rough map\/} if it is uniformly expansive and uniformly
metrically proper; if moreover there is a rough map $g:X'\to X$ so
that the compositions $g\circ f$ and $f\circ g$ are respectively close
to the identity maps on $X$ and $X'$, then $f$ is called a {\em rough
  equivalence\/}; in this case, $X$ and $X'$ are said to be {\em
  roughly equivalent\/}\footnote{The term {\em uniform closeness\/} is
  used in \cite{BlockWeinberger92} to indicate this equivalence
  between metric spaces}.  Thus rough equivalences are the maps that
induce isomorphisms in the rough category. There are interesting
differences between the rough category and the coarse category of
metric spaces Roe~\cite{Roe96}, but the following result shows that
they have the same isomorphisms.

\begin{proposition}\label{p:uniform metric properness} Any coarse equivalence
  between metric spaces is uniformly metrically proper.  Moreover the
  definition of uniform metric properness is satisfied with constants
  that depend only on the constants involved in the definition of
  coarse equivalence.
\end{proposition}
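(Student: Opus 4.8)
The plan is to exploit the coarse inverse $g:M'\to M$ of the given coarse equivalence $f:M\to M'$, using its uniform expansiveness together with the fact that $g\circ f$ lies close to the identity.

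First, recall that by definition of a coarse equivalence there is a coarse map $g:M'\to M$ with $g\circ f$ close to $\mathrm{id}_M$. Since $M$ carries the metric coarse structure, closeness of $g\circ f$ to $\mathrm{id}_M$ means there is a constant $L>0$ with $d(g(f(x)),x)\le L$ for all $x\in M$. Being a coarse map, $g$ is uniformly expansive in the sense of Definition~\ref{defn:coarse metric}(i): for each $R>0$ there is some $T=T(R)>0$ such that $d'(y,w)\le R$ implies $d(g(y),g(w))\le T$.

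Now fix $R>0$ and suppose $x,z\in M$ satisfy $d'(f(x),f(z))\le R$. Applying uniform expansiveness of $g$ to the pair $f(x),f(z)$ yields $d(g(f(x)),g(f(z)))\le T(R)$, and then the triangle inequality gives
\[
d(x,z)\le d(x,g(f(x)))+d(g(f(x)),g(f(z)))+d(g(f(z)),z)\le T(R)+2L\;.
\]
Thus $f$ is uniformly metrically proper with $S(R)=T(R)+2L$, and this quantity depends only on the expansiveness function $T$ of $g$ and on the closeness constant $L$ between $g\circ f$ and the identity, which are precisely the data recorded in the definition of a coarse equivalence.

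Since the argument is a single three-term estimate, there is no genuine obstacle; the only point requiring care is to apply expansiveness to the inverse map $g$ rather than to $f$ itself, and to confirm that the resulting bound $S(R)$ carries no hidden dependence on the particular points $x,z$.
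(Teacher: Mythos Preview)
Your proof is correct and follows essentially the same approach as the paper: use uniform expansiveness of the coarse inverse $g$ together with the closeness constant between $g\circ f$ and $\mathrm{id}_M$, then apply the triangle inequality to obtain $d(x,z)\le T(R)+2L$. The paper's argument is identical up to notation (it writes $r$ for your $L$ and $S$ for your $T(R)$).
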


\begin{proof}
  Let $f:M\to M'$ and $g:M'\to M$ be coarse maps so that $g\circ f$
  and $f\circ g$ are $r$-close to the identity maps on $M$ and $M'$
  for some $r>0$. Then, because $g$ is uniformly expansive, for any
  $R>0$ there is some $S>0$ such that
\[
d'(x',z') \le R\Longrightarrow d(g(x'),g(z')) \le S
\]
for all $x',z'\in M'$. It follows that when $x,z\in M$ are such that
$d'(f(x),f(z))\le R$, then
\[
d(x,z)\le d(x,g\circ f(x))+d(g\circ f(x),g\circ f(z)+d(g\circ f(z),z)
\le S+2r \,,\] which establishes that $f$ is uniformly metrically
proper.
\end{proof}

It is not possible to define ``equi-coarse maps'' or ``equi-coarse
equivalences'' between arbitrary coarse spaces, but in the case of
metric coarse structures the following related concepts can be
defined. A family of maps, $f_i:X_i\to X'_i$, $i\in\Lambda$, is said
to be a family of:
\begin{itemize}

\item {\em equi-uniformly expansive maps\/} if they satisfy the
  condition of uniform expansiveness involving the same constants;

\item {\em equi-uniformly metrically proper maps\/} if they satisfy the
  condition of uniform metric properness involving the same
  constants;

\item {\em equi-rough maps\/} if they are equi-uniformly expansive and
  equi-uniformly metrically proper; and

\item {\em equi-rough equivalences\/} if they are equi-rough, and
  there is another collection of equi-rough maps $g_i:X'_i\to X_i$,
  $i\in\Lambda$, and there is some $r>0$ so that the composites
  $g_i\circ f_i$ and $f_i\circ g_i$ are $r$-close to the identity maps
  on $X_i$ and $X'_i$, respectively, for all $i\in\Lambda$.

\end{itemize}

According to Proposition~\ref{p:uniform metric properness}, a
collection of equi-rough equivalences can be also properly called a
family of {\em equi-coarse equivalences\/}.

Gromov~\cite[Theorem 1.8.i]{Gromov99} characterizes complete path
metric spaces (that is, metric spaces where the distance between any
two points equals the infimum of the lengths of all paths joining
those two points) as those complete metric spaces, $(X,d)$, that
satisfy the following property: for all points $x,y$ in $X$ and every
$\varepsilon>0$, there is some point $z$ such that
$\max\{d(x,z),d(y,z)\}<\frac{1}{2}\,d(x,y)+\varepsilon$. This
condition can be called ``{\em approximate convexity\/}:'' a subset of
$\mathbf{R}^n$ satisfies this property (with respect to the induced
metric) if and only if it has convex closure. Gromov~\cite[Theorem
1.8.i]{Gromov99} establishes that a complete, locally compact metric
space is approximately convex if and only if it is geodesic: the
distance between any two points equals the length of some curve
joining those two points.

The following definition is a coarsely quasi-isometric version of
the above approximate convexity property.

\begin{definition}\label{d:coarsely quasi-convex} A metric space, $(M,d)$,
  is said to be {\em coarsely quasi-convex\/} if there are constants
  $a,b,c>0$ such that, for each $x,y\in M$, there is some finite
  sequence of points $x=x_0,\dots,x_n=y$ in $M$ such that
  $d(x_{k-1},x_k)\le c$ for all $k\in\{1,\dots,n\}$, and
\[
\sum_{k=1}^nd(x_{k-1},x_k)\le a\,d(x,y)+b\;.
\]
A family of metric spaces is said to be {\em equi-coarsely
  quasi-convex\/} if all of them satisfy the condition of being
coarsely quasi-convex with the same constants $a$, $b$, and $c$.
\end{definition}

\begin{remark}
  Definition~\ref{d:coarsely quasi-convex} can be compared with the
  concept of monogenic coarse space \cite{Roe03}. In the case of a
  metric coarse structure, the condition of being monogenic is
  obtained by removing the constants $a,b$ and the last inequality from
  Definition~\ref{d:coarsely quasi-convex}.
\end{remark}

A typical example of a coarsely quasi-convex space that is not
approximately convex is the set $V$ of vertices of a connected graph
$G$ with the metric $d_V$ induced by $G$.  This $V$ satisfies the
condition of being coarsely quasi-convex with constants $a=b=c=1$.
This metric on $V$ is the restriction of a metric on $G$ that can be
defined as follows. Choose any metric $d_e$ on each edge $e$ of $G$ so
that $e$ is isometric to the unit interval. Then the distance between
two points $x,y\in G$ is the minimum of the sums of the form
\[
d_e(x,v)+d_V(v,w)+d_f(w,y)\;,
\]
where $x,y$ lie in edges $e,f$, and $v,w$ are vertices of $e,f$,
respectively. Observe that $G$ is geodesic and $V$ is a $1/2$-net in
$G$. More generally, any net of a geodesic metric space is coarsely
quasi-convex. This is a particular case of the following result.

\begin{theorem}\label{t:coarsely quasi-convex} A metric space,
  $(M,d)$, is coarsely quasi-convex if and only if there exists a
  coarse quasi-isometry $f:A\to A'$ between $(M,d)$ and some geodesic
  metric space $(M',d')$. In this case, the coarsely quasi-isometric
  distortion of $f$ depends only on the constants involved in the
  condition coarse quasi-convexity satisfied by $M$, and conversely;
  equivalently, a family of metric spaces is equi-coarsely
  quasi-convex if and only if they are equi-coarsely quasi-isometric
  to geodesic metric spaces.
\end{theorem}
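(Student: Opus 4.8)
The plan is to prove both implications quantitatively, tracking the dependence of all constants, since the final ``equi'' statement is exactly the uniform version of the two quantitative implications. The elementary fact driving the ``if'' direction is that a geodesic space is itself coarsely quasi-convex with $a=1$, $b=0$, and $c$ arbitrary: given $x',y'$, a geodesic of length $L'=d'(x',y')$ subdivides into successive points at arc-length $\le c$ whose consecutive distances sum to exactly $L'$. So for the ``if'' part it suffices to show that coarse quasi-convexity is inherited, with controlled constants, under a coarse quasi-isometry.

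For that inheritance I would use Proposition~\ref{p:large scale Lipschitz extensions} to replace the coarse quasi-isometry $f:A\to A'$ by a large scale Lipschitz equivalence $\varphi:M\to M'$ with large scale Lipschitz inverse $\psi$, distortion $(\lambda,\mu)$ and closeness $R$, where $(\lambda,\mu,R)=(C,2CK,K)$. Given $x,y\in M$, I would subdivide a geodesic in $M'$ from $\varphi(x)$ to $\varphi(y)$, push the subdivision points back by $\psi$, and prepend and append $x$ and $y$. The large scale Lipschitz estimate for $\psi$ bounds each step of the resulting chain by $\lambda+\mu+R$ and bounds its total length by a linear function of $d(x,y)$; the only point requiring care is that the number $m$ of subdivision points is itself linear in $L'=d'(\varphi(x),\varphi(y))\le\lambda\,d(x,y)+\mu$, so that the accumulated additive error $m\mu$ remains linear. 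This yields coarse quasi-convexity of $M$ with constants depending only on $\lambda,\mu,R$, hence only on $(K,C)$.

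For the converse I would manufacture a geodesic space from a net. Fix a $c$-separated $c$-net $A$ of $M$ by Lemma~\ref{l:separated net}, and let $\Gamma$ be the graph with vertex set $A$, an edge joining $u\ne v$ whenever $d(u,v)\le 4c$, and every edge isometric to the unit interval; equip $\Gamma$ with the path metric $d_\Gamma$. Coarse quasi-convexity guarantees that any two net points are joined by a chain, hence by an edge-walk, so $\Gamma$ is connected; and because all edges have equal length, the distance between two vertices is always realized by a shortest edge-walk, so $\Gamma$ is geodesic. (This is precisely where unit lengths matter: with variable edge lengths an infimum over infinitely many walks around a vertex of infinite valence need not be attained, so the space could fail to be geodesic when $M$ is not assumed proper.) Since $A$ is a $c$-net in $M$ and a $\tfrac12$-net in $\Gamma$, it then remains only to show that the identity $(A,d)\to(A,d_\Gamma)$ is bi-Lipschitz with distortion depending only on $a,b,c$, which exhibits the sought coarse quasi-isometry $f=\mathrm{id}_A$.

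The bi-Lipschitz estimate is the heart of the matter. For one inequality, a shortest $\Gamma$-path between $x,y\in A$ passes through vertices whose successive $d$-distances are $\le 4c$, so the triangle inequality gives $d(x,y)\le 4c\,d_\Gamma(x,y)$. The opposite inequality is the delicate one. Starting from a quasi-convexity chain $x=x_0,\dots,x_n=y$ of total length $\le a\,d(x,y)+b$, I would thin it greedily by arc length so that consecutive selected points lie at distance $\le 2c$ while, except for the last, their arc-length gaps exceed $c$; this forces the number $p$ of selected points to be linear in $d(x,y)$, namely $p<(a\,d(x,y)+b)/c+1$. Replacing each selected point by a net point within distance $c$ produces an edge-walk in $\Gamma$ with at most $p$ edges, so $d_\Gamma(x,y)\le p$, and the additive constant is absorbed by $c$-separation, since $d(x,y)>c$ for $x\ne y$, giving $d_\Gamma(x,y)\le C'\,d(x,y)$ with $C'$ depending only on $a,b,c$. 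The main obstacle, and the step I expect to require the most care, is exactly this control of the number of edges: the raw chain may contain arbitrarily many nearly coincident points, so naively approximating every chain point by a net point fails, and the greedy arc-length thinning is what converts the length bound into the edge-count bound needed for genuinely bi-Lipschitz (rather than merely large scale Lipschitz) behavior.
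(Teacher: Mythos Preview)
Your proposal is correct and follows the same two-part strategy as the paper: for the ``if'' direction, pull a subdivided geodesic in $M'$ back to a chain in $M$; for the converse, realize a separated net of $M$ as the vertex set of a connected unit-edge graph and show the identity on the net is bi-Lipschitz with distortion depending only on $a,b,c$. The execution differs in two places. In the ``if'' direction you pass to the large-scale Lipschitz extension $\varphi$ via Proposition~\ref{p:large scale Lipschitz extensions} and take a uniform unit subdivision of the geodesic, so the number of steps is visibly $\le L'+1$; the paper instead works directly with the net bijection $f$, chooses a shortest subdivision, and uses a parity argument (at least half the steps have length $\ge 1/2$) to bound its length. In the converse the paper uses edge threshold $3c$, approximates \emph{every} chain point by a net point, deletes consecutive repetitions, and then invokes $c$-separation to bound the number of surviving edges; your greedy arc-length thinning (which forces the threshold $4c$) controls the edge count directly from the total-length bound and is cleaner precisely at the point you flagged as the main obstacle, namely that the raw chain may have arbitrarily many points.
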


\begin{proof}
  Suppose that there is a coarse quasi-isometry $f:A\to A'$, with
  coarse distortion $(K,C)$, between $(M,d)$ and a geodesic metric space
  $(M',d')$. For all $x,y\in M$, there are some $\bar x,\bar y\in A$
  with $d(x,\bar x),d(y,\bar y)\le K$. Then there is some finite
  sequence $f(\bar x)=x'_0,\dots,x'_n=f(\bar y)$ in $M'$ such that
  $d'(x'_{k-1},x'_k)<1$ for all $k\in\{1,\dots,n\}$ and
\[
\sum_{k=1}^nd'(x'_{k-1},x'_k)=d'(f(\bar x),f(\bar y))\;.
\]
Moreover, we can assume that this is one of the shortest sequences
satisfying this condition. If  $d'(x'_{k-1},x'_k)<1/2$ and
$d'(x'_k,x'_{k+1})<1/2$ for some $k$, then the term $x'_k$ could be
removed from the sequence, contradicting its minimality. It follows
that $d'(x'_{k-1},x'_k)\ge1/2$ for at least $[n/2]$ indexes $k$. So
\[
(n-1)/4\le[n/2]/2\le\sum_{k=1}^nd'(x'_{k-1},x'_k)=d'(f(\bar x),f(\bar y))\;,
\]
which implies
\begin{equation}\label{e:coarsely quasi-convex} n\le4\,d'(f(\bar
  x),f(\bar y))+1\;.
\end{equation}

For each $k\in\{0,\dots,n\}$, there is some $\bar x'_k\in A'$ with
$d'(\bar x'_k,x'_k)\le K$, and let $\bar x_k=f^{-1}(\bar x'_k)$; for
simplicity, take $\bar x'_0=x'_0$ and $\bar x'_n=x'_n$, and thus $\bar
x_0=\bar x$ and $\bar x_n=\bar y$. To simplify the notation, let also
$x_0=x$, $x_n=y$, and $x_k=\bar x_k$ for $k\in\{1,\dots,n-1\}$. Then
\begin{align*}
d(x_{k-1},x_k)&\le d(x_{k-1},\bar x_{k-1})+d(\bar x_{k-1},\bar x_k)
+d(\bar x_k,x_k)\\
&\le2K+C\,d'(\bar x'_{k-1},\bar x'_k)\\
&\le2K+C\,(d'(\bar x'_{k-1},x'_{k-1})+d'(x'_{k-1},x'_k)
+d'(x'_k,\bar x'_k))\\
&\le2K+2CK+C
\end{align*}
for $k\in\{1,\dots,n\}$, and
\begin{align*}
\sum_{k=1}^nd(x_{k-1},x_k)&\le d(x,\bar x)
+\sum_{k=1}^nd(\bar x_{k-1},\bar x_k)+d(\bar y,y)\\
&\le2K+C\,\sum_{k=1}^nd'(\bar x'_{k-1},\bar x'_k)\\
&\le2K+C\,\sum_{k=1}^n(d'(\bar x'_{k-1},x'_{k-1})+d'(x'_{k-1},x'_k)
+d'(x'_k,\bar x'_k))\\
&\le2K+2CKn+C\,\sum_{k=1}^nd'(x'_{k-1},x'_k)\\
&\le2K+2CK\left(4\,d'(f(\bar x),f(\bar y))+1)\right)
+C\,d'(f(\bar x),f(\bar y))\\
&\le2K+2CK+(8CK+C)C\,d(\bar x,\bar y)\\
&\le2K+2CK+(8CK+C)C\,(d(\bar x,x)+d(x,y)+d(y,\bar y))\\
&\le2K+2CK+2(8CK+C)CK+(8CK+C)C\,d(x,y)\;,
\end{align*}
where~\eqref{e:coarsely quasi-convex} was used in the fifth inequality.
Thus the condition of Definition~\ref{d:coarsely quasi-convex} is satisfied
with $a$, $b$ and $c$ depending only on $K$ and $C$, as desired.

Assume now that $(M,d)$ satisfies the coarsely quasi-convex condition
(Definition~\ref{d:coarsely quasi-convex}) with constants $a$, $b$ and $c$.
By Lemma~\ref{l:separated net}, there is a $c$-separated $c$-net $A$
in $M$. By attaching an edge to any pair of points $x,y\in A$ with
$d(x,y)\le3c$, there results a graph $M'$ whose set of vertices is
$A$. For any $x,y\in A$, there is a finite sequence
$x_0=x, x_1, \dots,x_n=y$ in $M$ with $d(x_{k-1},x_k)\le c$ for all
$k\in\{1,\dots,n\}$, and
\[
\sum_{k=1}^nd(x_{k-1},x_k)\le a\,d(x,y)+b\;.
\]
For each $k$, take some $\bar x_k\in A$ with $d(x_k,\bar x_k)\le c$;
in particular, take $\bar x_0=x$ and $\bar x_n=y$. Then there is an
edge between each $\bar x_{k-1}$ and $\bar x_k$ because
\[
d(\bar x_{k-1},\bar x_k)\le d(\bar x_{k-1},x_{k-1})+d(x_{k-1},x_k)
+d(x_k,\bar x_k)\le3c\;.
\]
Therefore $M'$ is a connected graph. Let $d'$ denote the geodesic metric
on $M'$, defined as above, with each edge having a metric that makes
it isometric to the unit interval. Since $A$ is a $1$-net in $M'$, it
only remains to check that the identity map $(A,d)\to(A,d')$ is
bi-Lipschitz with bi-Lipschitz distortion depending only on $a$, $b$
and $c$.  Fix any pair of different points $x,y\in A$, and take a
sequence $x=\bar x_0,\dots,\bar x_n=y$ as above; after removing some
points of this sequence, if necessary, it may be assumed that $\bar
x_{k-1}\neq\bar x_k$ for all $k$. Since there is an edge between each
$\bar x_{k-1}$ and $\bar x_k$, it follows that $d'(x,y)\le n$. Since
$A$ is $c$-separated,
\[
c n\le\sum_{k=1}^nd(\bar x_{k-1},\bar x_k)\le a\,d(x,y)+b\le
\frac{ac+b}{c}\,d(x,y)\;,
\]
and so
\[
d'(x,y)\le\frac{ac+b}{c^2}\,d(x,y) .
\]
On the other hand, if $d'(x,y)=m$, then there is a sequence $x=y_0,\dots,y_m=y$
in $A$ with the property that  each pair $y_{k-1}$, $y_k$ is joined by
an edge; thus
$d(y_{k-1},y_k)\le3c$ for each $k$, and so
\[
d(x,y)\le\sum_{k=1}^md(y_{k-1},y_k)\le3cm=3c\,d'(x,y)\;. \qedhere
\]
\end{proof}

\begin{remark}
  Theorem~\ref{t:coarsely quasi-convex} is a coarsely quasi-isometric
  version of \cite[Proposition~2.57]{Roe03}, which asserts that the
  monogenic coarse structures are those that are coarsely equivalent
  to geodesic metric spaces.
\end{remark}

\begin{proposition}\label{p:coarsely Lipschitz} The following properties hold
  true:
\begin{enumerate}

\item[\textup{(i)}] Any large scale Lipschitz map between metric
  spaces is uniformly expansive; moreover, a family of equi-large
  scale Lipschitz maps between metric spaces is equi-uniformly
  expansive.

\item[\textup{(ii)}] Any large scale Lipschitz equivalence is a rough
  equivalence; moreover, a family of equi-large scale Lipschitz
  equivalences between metric spaces is a family of equi-rough
  equivalences.

\end{enumerate}
\end{proposition}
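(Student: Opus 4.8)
The plan is to read off both coarse-geometric conditions directly from the large scale Lipschitz inequalities, while tracking how each output constant depends on the input distortion so that the ``equi'' assertions follow with no extra work.

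For part (i), I would begin with a large scale Lipschitz map $f:M\to M'$ of distortion $(\lambda,c)$. Given $R>0$, whenever $d(x,z)\le R$ the defining inequality $d'(f(x),f(z))\le\lambda\,d(x,z)+c$ yields $d'(f(x),f(z))\le\lambda R+c$, so uniform expansiveness holds with $S=\lambda R+c$. Since this bound depends only on $\lambda$ and $c$, any family sharing a common distortion $(\lambda,c)$ admits the same $S$ for each $R$, which is exactly equi-uniform expansiveness.

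For part (ii), let $f:M\to M'$ be a large scale Lipschitz equivalence with distortion $(\lambda,c,R)$, and let $g:M'\to M$ be a companion large scale Lipschitz map of distortion $(\lambda,c)$ with $g\circ f$ and $f\circ g$ being $R$-close to the respective identities. By part (i) both $f$ and $g$ are uniformly expansive with constants depending only on $(\lambda,c)$, so the only substantive point is uniform metric properness of $f$. Here I would insert $g\circ f(x)$ and $g\circ f(y)$ into the triangle inequality and combine the $R$-closeness of $g\circ f$ to the identity with the large scale Lipschitz bound for $g$ to obtain
\[
d(x,y)\le\lambda\,d'(f(x),f(y))+2R+c\;.
\]
This is precisely the large scale bi-Lipschitz property already flagged in the excerpt, and it gives: if $d'(f(x),f(y))\le R'$ then $d(x,y)\le\lambda R'+2R+c$, so $f$ is uniformly metrically proper with a constant depending only on $(\lambda,c,R)$. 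The symmetric argument applied to $g$ makes it a rough map as well, so $f$ and $g$ are mutually inverse rough maps up to closeness, i.e.\ $f$ is a rough equivalence.

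The ``equi'' version of part (ii) then requires only the observation that every constant produced above is a function of the common distortion $(\lambda,c,R)$ alone, so a family of equi-large scale Lipschitz equivalences is automatically equi-uniformly expansive and equi-uniformly metrically proper, hence a family of equi-rough equivalences. The whole argument is routine bookkeeping of constants; I expect no genuine obstacle beyond verifying the single displayed inequality that supplies metric properness.
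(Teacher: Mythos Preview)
Your proposal is correct and follows essentially the same approach as the paper: for (i) you take $S=\lambda R+c$, and for (ii) you insert $g\circ f(x)$ and $g\circ f(y)$ into the triangle inequality to obtain the large scale bi-Lipschitz bound, then read off uniform metric properness. The only difference is cosmetic bookkeeping of constants (the paper uses $r$ for the closeness constant and, incidentally, drops a ``$+c$'' in its displayed bound that you correctly retain).
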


\begin{proof}
  Let $(M,d)$ and $(M',d')$ be metric spaces, and let $f:M\to M'$ be a
  large scale Lipschitz map. If $(\lambda,c)$ is a large scale
  Lipschitz distortion of $f$, then $f$ obviously satisfies the
  definition of uniform expansiveness with $S=\lambda R+c$ for each
  $R>0$.  This proves property~(i) because $S$ depends only on $R$,
  $\lambda$ and $c$.

  For (ii), suppose that $f$ is a large scale Lipschitz equivalence.
  Then there is a large scale Lipschitz map $g:M'\to M$, whose large
  scale Lipschitz distortion can be assumed to be also $(\lambda,c)$,
  such that $g\circ f$ and $f\circ g$ are $r$-close to the identity
  maps on $M$ and $M'$, for some $r>0$. Then
\begin{align*}
  d(x,y)&\le d(x,g\circ f(x))+d(g\circ f(x),g\circ f(y))+d(g\circ f(y),y)\\
  &\le\lambda\,d'(f(x),f(y))+2r\;.
\end{align*}
Hence $f$ satisfies the definition of uniform metric properness with
$S=\lambda R+2r$, for each $R>0$. This proves property~(ii) because $S$
depends only on $R$, $\lambda$ and $r$.
\end{proof}

\begin{example}\label{ex:coarsely quasi-convex} Let $\mathbf{N}^2=\{n^2\ |\
  n\in\mathbf{N}\}$ and $\mathbf{N}^3=\{n^3\ |\ n\in\mathbf{N}\}$ with
  the restriction of the Euclidean metric on $\mathbf{R}$. Suppose
  that $\mathbf{N}^2$ and $\mathbf{N}^3$ are large scale Lipschitz
  equivalent; \textit{i.e.}, there are large scale Lipschitz maps
  $f:\mathbf{N}^2\to\mathbf{N}^3$ and $g:\mathbf{N}^3\to\mathbf{N}^2$
  with large scale Lipschitz distortion $(\lambda,c)$ such that
  $g\circ f$ and $f\circ g$ are close to identity maps on
  $\mathbf{N}^2$ and $\mathbf{N}^3$. Let
  $\sigma,\tau:\mathbf{N}\to\mathbf{N}$ be the maps defined by
  $f(n^2)=\sigma(n)^3$ and $g(n^3)=\tau(n)^2$. Since
  $(n+1)^2-n^2\to\infty$ and $(n+1)^3-n^3\to\infty$ as $n\to\infty$,
  there is some $a\in\mathbf{N}$ such that $g\circ f(n^2)=n^2$ and
  $f\circ g(n^3)=n^3$ for all $n\ge a$, and so
  $\tau\circ\sigma(n)=\sigma\circ\tau(n)=n$ for $n\ge a$.  Assume for
  a while that there is some integer $b\ge a$ such that $\tau(n)\ge
  n+a+2$ for all $n\ge b$. Thus
\[
\tau(\{b,b+1,\dots\})\subset\{b+a+2,b+a+3,\dots\}
\]
and therefore
\[
\{a,a+1,\dots,b+a+1\}\subset\tau(\{0,1,\dots,b-1\})
\]
because $\{a,a+1,\dots\}\subset\tau(\mathbf{N})$. So
\[
b+1=\#\{a,a+1,\dots,b+a+1\}\le\#\tau(\{0,\dots,b-1\})\le b\;,
\]
which is a contradiction. Hence there is some sequence
$n_k\uparrow\infty$ in $\mathbf{N}$ such that $\tau(n_k)\le n_k+a+1$
for all $k$.  So
\[
|\tau(n_k)^2-n_k^2|\le(\tau(n_k)+n_k)^2\le(2n_k+a+1)^2\;.
\]
We can assume that $n_k\ge a$ for all
$k$. Then
\begin{align*}
n_k^3&=f\circ g(n_k^3)-f\circ g(n_1^3)+n_1^3\\
&\le\lambda\,|g(n_k^3)-g(n_1^3)|+c+n_1^3\\
&=\lambda\,|\tau(n_k)^2-\tau(n_1)^2|+c+n_1^3\\
&\le\lambda\,(|\tau(n_k)^2-n_k^2|
+n_k^2-n_1^2+|n_1^2-\tau(n_1)^2|)+c+n_1^3\\
&\le\lambda\,((2n_k+a+1)^2+n_k^2-n_1^2+(2n_1+a+1)^2)+c+n_1^3\\
&=5\lambda n_k^2+4\lambda(a+1)n_k+3\lambda n_1^2+4\lambda(a+1)n_1+2\lambda
(a+1)^2+c+n_1^3\;,
\end{align*}
which is a contradiction because $n_k\to\infty$ as $k\to\infty$.
Therefore there is no large scale Lipschitz equivalence between
$\mathbf{N}^2$ and $\mathbf{N}^3$, and thus these spaces are not
coarsely quasi-isometric. But they are coarsely equivalent; indeed,
the map $n^3\mapsto n^2$ is distance decreasing, and the map
$n^2\mapsto n^3$ is coarse: if $0<|n^2-m^2|<R$, then $n+m<R$ also, so
$|n^3-m^3|<S$ with $S=R^3$.
\end{example}

Example~\ref{ex:coarsely quasi-convex} shows that the converse of
Proposition~\ref{p:coarsely Lipschitz}-(2) does not hold in general.
Nevertheless, the following proposition shows that coarse equivalences
coincide with large scale Lipschitz equivalences for metric spaces
that are coarsely quasi-convex.

\begin{proposition}\label{p:coarsely quasi-convex} Any uniformly expansive
  map of a coarsely quasi-convex metric space to another metric space
  is large scale Lipschitz; moreover, a family of equi-uniformly
  expansive maps between metric spaces, whose domains are
  equi-coarsely quasi-convex, is a family of equi-large scale
  Lipschitz maps.
\end{proposition}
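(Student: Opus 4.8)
The plan is to take $(M,d)$ coarsely quasi-convex with constants $a,b,c$, a uniformly expansive map $f\colon M\to M'$, and fix two points $x,y\in M$; I would estimate $d'(f(x),f(y))$ by walking along a chain joining $x$ to $y$ and applying the triangle inequality in $M'$. By Definition~\ref{d:coarsely quasi-convex} there is a finite sequence $x=x_0,\dots,x_n=y$ with $d(x_{k-1},x_k)\le c$ for each $k$ and $\sum_{k=1}^n d(x_{k-1},x_k)\le a\,d(x,y)+b$. Uniform expansiveness applied with $R=c$ gives some $S>0$ with $d'(f(x_{k-1}),f(x_k))\le S$ for every $k$, so that $d'(f(x),f(y))\le\sum_{k=1}^n d'(f(x_{k-1}),f(x_k))\le nS$. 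To conclude that $f$ is large scale Lipschitz, I then need $n$ bounded linearly in $d(x,y)$.

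The main obstacle is precisely this: the sequence furnished by the definition controls only the \emph{total length} $\sum_k d(x_{k-1},x_k)$, not the \emph{number} $n$ of steps. The intermediate points may cluster arbitrarily closely, so $n$ cannot be read off from $a\,d(x,y)+b$ directly, and the crude bound $nS$ is useless as it stands. I would overcome this by thinning the chain so that the retained steps are bounded away from zero. Concretely, extract a subsequence $x=y_0,\dots,y_m=y$ greedily: having chosen $y_j=x_{k_j}$, let $k_{j+1}$ be the least index exceeding $k_j$ with $d(x_{k_j},x_{k_{j+1}})>c$, and set $y_m=x_n$ once no such index remains. Because every original step is $\le c$, passing through the last skipped point shows each retained step satisfies $d(y_{j-1},y_j)\le 2c$, while by construction each retained step but the last strictly exceeds $c$.

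Two estimates then close the argument. First, telescoping the triangle inequality through the skipped points gives $\sum_{j=1}^m d(y_{j-1},y_j)\le\sum_{k=1}^n d(x_{k-1},x_k)\le a\,d(x,y)+b$; combining this with the lower bound $c$ on the first $m-1$ retained steps yields $(m-1)c\le a\,d(x,y)+b$, hence $m\le\frac{a}{c}\,d(x,y)+\frac{b}{c}+1$. Second, applying uniform expansiveness now with $R=2c$ produces some $S>0$ with $d'(f(y_{j-1}),f(y_j))\le S$ for every retained step, so that $d'(f(x),f(y))\le mS\le\frac{aS}{c}\,d(x,y)+S\bigl(\tfrac{b}{c}+1\bigr)$. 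This exhibits $f$ as large scale Lipschitz with distortion $(\lambda,c')$, where $\lambda=\max\{1,aS/c\}$ and $c'=S\bigl(\frac{b}{c}+1\bigr)$.

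For the ``moreover'' clause I would simply track the dependence of these constants. The value $S$ depends only on the uniform expansiveness data and on $2c$, while $a$, $b$, $c$ are the quasi-convexity constants; hence $\lambda$ and $c'$ depend only on data shared by an equi-uniformly expansive family of maps whose domains are equi-coarsely quasi-convex. Running the same construction simultaneously for each member of such a family therefore yields a common large scale Lipschitz distortion, which is exactly the assertion that the family is equi-large scale Lipschitz.
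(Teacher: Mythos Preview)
Your proof is correct and follows essentially the same approach as the paper's: both identify that the obstacle is bounding the number of chain steps linearly in $d(x,y)$, and both resolve it by ensuring the steps are bounded away from zero so that the total-length bound controls the count. The paper does this by taking a chain of minimal length and observing that minimality forbids two consecutive steps each shorter than $c/2$, whence at least $(n-1)/2$ steps are $\ge c/2$ and $n\le\frac{4a}{c}\,d(x,y)+\frac{4b}{c}+1$; it then applies uniform expansiveness at scale $c$. Your greedy thinning achieves the same effect with slightly sharper constants (an $m$-bound with leading coefficient $a/c$ rather than $4a/c$) at the cost of invoking uniform expansiveness at scale $2c$ instead of $c$; the trade-off is immaterial and the dependency tracking for the ``moreover'' clause goes through identically.
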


\begin{proof}
  Let $(M,d)$ and $(M',d')$ be metric spaces, and let $f:M\to M'$ be a
  uniformly expansive map.  Suppose that $M$ satisfies the condition
  of being coarsely quasi-convex with constants $a$, $b$, and $c$. Fix
  points $x,y\in M$, and let $x=x_0,\dots, x_n=y$ be a sequence of
  smallest length such that $d(x_{k-1}, x_k)\le c$, for $k=1, \cdots,
  n$, and
  \[\sum_{k=1}^n d(x_{k-1}, x_k) \le a\,d(x,y) + b\;.\]

  If both $d(x_{k-1},x_k) < c/2$ and $d(x_k,x_{k+1}) < c/2$ for some
  $k\in\{1,\dots,n-1\}$, then $d(x_{k-1},x_{k+1})<c$, and thus $x_k$
  could be removed from the sequence $x_0, x_1,\cdots, x_n$,
  contradicting that this was a sequence of smallest length. Hence
  there are at least $(n-1)/2$ indexes $k\in \{ 1, \cdots, n\}$ such
  that $d(x_{k-1},x_k)\ge c/2$. So
  \[
  a\,d(x,y)+b \ge
  \sum_{k=1}^n d(x_{k-1},x_k) \ge \frac{(n-1)c}{4}\;,
  \]
  or
  \begin{equation}\label{e:n<...}
    n \le \frac{4a}{c}\,d(x,y) + \frac{4b}{c} + 1\;.
  \end{equation}

  Since $f$ is uniformly expansive, there is some $S>0$ such that
  $d'(f(z),f(z'))\le S$ for all $z,z'\in M$ with $d(z,z') \le c$. So,
  by~\eqref{e:n<...},
  $$
  d'(f(x),f(y))\le\sum_{k=1}^nd'(f(x_{k-1}),f(x_k)) \le nS
  \le \frac{4aS}{c}\,d(x,y) + \frac{4bS}{c} + S ,
  $$
  which establishes that $f$ is large scale Lipschitz with large scale
  Lipschitz distortion depending only on $S$, $a$, $b$ and $c$.
\end{proof}

\begin{cor} \label{c:coarsely quasi-convex 1} Any coarse equivalence
  between coarsely quasi-convex metric spaces is a large scale
  Lipschitz equivalence; moreover, a family of equi-coarse
  equivalences between equi-coarsely quasi-convex spaces is a family
  of equi-large scale Lipschitz equivalences.
\end{cor}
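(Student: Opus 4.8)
The plan is to combine Proposition~\ref{p:coarsely quasi-convex}, applied to \emph{both} legs of a coarse equivalence, with the definition of a large scale Lipschitz equivalence; essentially no new computation is needed. Let $f:M\to M'$ be a coarse equivalence between coarsely quasi-convex metric spaces, and let $g:M'\to M$ be a coarse map such that $g\circ f$ and $f\circ g$ are $r$-close to the respective identity maps for some $r>0$. First I would note that, since $f$ and $g$ are coarse maps between metric spaces, each satisfies condition~(i) of Definition~\ref{defn:coarse metric}; that is, $f$ and $g$ are both uniformly expansive.

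Next I would invoke Proposition~\ref{p:coarsely quasi-convex} in each direction. Because the domain $M$ of $f$ is coarsely quasi-convex and $f$ is uniformly expansive, $f$ is large scale Lipschitz; symmetrically, because the domain $M'$ of $g$ is coarsely quasi-convex and $g$ is uniformly expansive, $g$ is large scale Lipschitz. Since $g\circ f$ and $f\circ g$ are close to the identity maps, these two facts together exhibit $f$ as a large scale Lipschitz equivalence, with $g$ serving as the required large scale Lipschitz ``inverse'' and the closeness constant $r$ furnishing the parameter $R$ in the large scale Lipschitz equivalence distortion. This is precisely where coarse quasi-convexity of both $M$ and $M'$ enters: the hypothesis on $M'$ is what allows the coarse inverse $g$ to be upgraded to a large scale Lipschitz map, and without it the argument would stall.

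For the equi-statement I would run the same reasoning uniformly across the family. A family of equi-coarse equivalences $f_i:M_i\to M_i'$ comes, by definition through equi-rough equivalences, equipped with a family of equi-uniformly expansive coarse inverses $g_i:M_i'\to M_i$ and a single closeness constant $r$ valid for all $i$. Applying the equi-version of Proposition~\ref{p:coarsely quasi-convex} to $\{f_i\}$ over the equi-coarsely quasi-convex domains $\{M_i\}$ shows $\{f_i\}$ is equi-large scale Lipschitz, and applying it to $\{g_i\}$ over $\{M_i'\}$ shows $\{g_i\}$ is equi-large scale Lipschitz; taking the larger of the two resulting common distortions together with the shared $r$ yields a single large scale Lipschitz equivalence distortion for the whole family. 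The argument is thus mostly bookkeeping, and the only point demanding attention—rather than a genuine obstacle—is ensuring coarse quasi-convexity is assumed for the codomains as well as the domains, so that the inverse maps can be promoted; all the constant dependencies are already supplied by the equi-version of Proposition~\ref{p:coarsely quasi-convex}.
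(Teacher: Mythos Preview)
Your proposal is correct and is exactly the argument the paper intends: the paper's own proof is the single line ``This is elementary by Proposition~\ref{p:coarsely quasi-convex},'' and your write-up simply unpacks that line by applying the proposition to both $f$ and its coarse inverse $g$, using coarse quasi-convexity of $M$ and $M'$ respectively, and carrying the closeness constant through. The equi-version is handled identically, just as you describe.
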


\begin{proof}
  This is elementary by Proposition~\ref{p:coarsely quasi-convex}.
\end{proof}

\begin{cor}\label{c:coarsely quasi-convex 2}
  Two coarsely quasi-convex metric spaces are coarsely quasi-isometric if
  and only if they are coarsely equivalent; moreover, if $M_i$ and
  $M'_i$, $i\in\Lambda$, are families of equi-coarsely quasi-convex metric
  spaces, then all pairs $M_i$ and $M'_i$ are equi-coarsely
  quasi-isometric if and only they are equi-coarsely equivalent.
\end{cor}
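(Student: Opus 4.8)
The plan is to assemble the implications already established, observing that coarse quasi-convexity is needed only for one of the two directions. I would prove the two implications separately and then note that each step carries its equi-version along automatically, since every proposition invoked asserts that its output constants depend only on its input data.

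For the forward implication I would argue that coarsely quasi-isometric spaces are coarsely equivalent \emph{without} using quasi-convexity at all. Given a coarse quasi-isometry $f:A\to A'$ with coarse distortion $(K,C)$, Proposition~\ref{p:large scale Lipschitz extensions} produces a large scale Lipschitz equivalence $\varphi:M\to M'$ with distortion $(C,2CK,K)$ inducing $f$. By Proposition~\ref{p:coarsely Lipschitz}-(ii), $\varphi$ is a rough equivalence; since uniform metric properness implies metric properness and uniform expansiveness is precisely condition~(\ref{defn:coarse-map-1}) of Definition~\ref{defn:coarse-map} (see Definition~\ref{defn:coarse metric}), $\varphi$ is in particular a coarse equivalence. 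Hence $M$ and $M'$ are coarsely equivalent.

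For the reverse implication, this is where I would use that both $M$ and $M'$ are coarsely quasi-convex. Given a coarse equivalence between these spaces, Corollary~\ref{c:coarsely quasi-convex 1} upgrades it to a large scale Lipschitz equivalence $\varphi:M\to M'$, say with distortion $(\lambda,c,R)$. Fixing any $\varepsilon>0$, Proposition~\ref{p:restrictions of large scale Lipschitz maps} then restricts $\varphi$ to a coarse quasi-isometry $f:A\to A'$ with the explicit coarse distortion recorded there, so $M$ and $M'$ are coarsely quasi-isometric.

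Finally, the equi-versions follow by running the same argument uniformly over $i\in\Lambda$. In the forward direction a common coarse distortion $(K,C)$ for the $f_i$ yields, via Proposition~\ref{p:large scale Lipschitz extensions}, the common large scale Lipschitz equivalence distortion $(C,2CK,K)$ for the $\varphi_i$, and the equi-version of Proposition~\ref{p:coarsely Lipschitz}-(ii) gives equi-rough, hence equi-coarse, equivalences (by the remark following Proposition~\ref{p:uniform metric properness}). In the reverse direction the equi-version of Corollary~\ref{c:coarsely quasi-convex 1} produces equi-large scale Lipschitz equivalences from the equi-coarse equivalences between the equi-coarsely quasi-convex $M_i$ and $M'_i$; choosing a single value of $\varepsilon>0$ for the whole family in Proposition~\ref{p:restrictions of large scale Lipschitz maps} then yields equi-coarse quasi-isometries. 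The only point demanding care --- and it is the crux rather than a genuine obstacle --- is the bookkeeping of constants: because each cited result explicitly records that its constants depend only on the input constants, uniformity over $\Lambda$ is preserved at every step.
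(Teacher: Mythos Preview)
Your proof is correct and follows essentially the same route as the paper, which simply cites Propositions~\ref{p:large scale Lipschitz extensions}, \ref{p:restrictions of large scale Lipschitz maps}, \ref{p:coarsely quasi-convex}, and Corollary~\ref{c:coarsely quasi-convex 1}. You spell out the chain of implications more carefully and make explicit the nice observation that coarse quasi-convexity is needed only for the reverse implication, invoking Proposition~\ref{p:coarsely Lipschitz}-(ii) rather than Proposition~\ref{p:coarsely quasi-convex} for the forward direction; otherwise the argument is the same.
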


\begin{proof}
  This follows from Propositions~\ref{p:large scale Lipschitz
    extensions},~\ref{p:restrictions of large scale Lipschitz maps}
  and~\ref{p:coarsely quasi-convex}, and Corollary~\ref{c:coarsely quasi-convex
    1}.
\end{proof}

\section{The Higson Compactification}

A significant example of coarse structure is induced by any
compactification\footnote{Only metrizable compactifications are
  considered in \cite{Roe93}, but this kind of coarse structure can
  be defined for arbitrary compactifications \cite{HigsonRoe00,Roe96}}
${X}^\kappa = \overline{X}$ of a topological space $X$, with corona
$\partial X=\kappa X={X}^\kappa \setminus X$. This coarse structure on $X$
is defined by declaring a subset $E\subset X\times X$ to be controlled
when
\[
\overline{E}\cap\partial(X\times X)
\subset\Delta_{\partial X}
\]
in $\overline{X}\times\overline{X}$, where
\[
\partial(X\times X)=\left(\partial X\times\overline{X}\right)\cup
\left(\overline{X}\times\partial X\right)\;.
\]
This is called the {\em topological\/}\footnote{This term is used in
  \cite{HigsonRoe00}. It is also called {\em continuously
    controlled\/} coarse structure in \cite{Roe96}} coarse structure
\index{topological coarse structure} associated to the given
compactification; it is proper if $\overline{X}$ is metrizable
\cite{Roe03}, \cite{Roe05}.

A compactification $\overline{X}$ of a proper coarse space $X$ is said
to be a {\em coarse compactification\/}, with {\em coarse corona\/}
$\partial X=\overline{X}\setminus X$, if the identity map from $X$
with its given coarse structure to $X$ endowed with the topological
coarse structure arising from $\overline{X}$ is a coarse
map. Intuitively, the slices of any controlled subset of $X\times X$
become small when approaching the boundary $\partial X$; in
particular, this holds for the sets of any uniformly bounded family in
$X$.

The structure of coarse compactifications of a proper coarse space $X$
can be described algebraically as follows. Let $\mathcal{B}(X)$ be the
Banach algebra of all bounded functions $X\to\mathbf{C}$ with the
supremum norm, and let $\mathcal{B}_0(X)$ be the Banach subalgebra of
all functions $f\in\mathcal{B}(X)$ that vanish at infinity;
\textit{i.e.}, such that, for any $\varepsilon>0$, there is some
compact subset $K\subset X$ so that $|f(x)|<\varepsilon$ for all $x\in
X\setminus K$.  For any $f\in\mathcal{B}(X)$ and every controlled
subset $E\subset X\times X$, let the {\em $E$-expansion\/} of $f$ be
the function $\nabla_Ef\in\mathcal{B}(X)$ defined by
\[
\nabla_Ef(x)= \sup\{|f(x)-f(y)|\ |\ (x,y)\in E\}\;.
\]

\begin{definition}\label{defn:higson-function}
  A function $f\in\mathcal{B}(X)$ is called a {\em Higson function\/}
  if $\nabla_Ef\in\mathcal{B}_0(X)$ for all controlled subsets
  $E\subset X\times X$.
\end{definition}

The set $\mathcal{B}_\nu(X)$ of all Higson functions on $X$ is a Banach
subalgebra of $\mathcal{B}(X)$ \cite{Roe96}, \cite{HigsonRoe00}, \cite{Roe03}.
If only bounded continuous functions are considered, then the notation
$C_b(X)$, $C_0(X)$ and $C_\nu(X)$ will be used instead of $\mathcal{B}(X)$,
$\mathcal{B}_0(X)$ and $\mathcal{B}_\nu(X)$, respectively.

The terms {\em $\overline{X}$-close\/} maps, {\em
  $\overline{X}$-controlled\/} sets and {\em $\overline{X}$-coarse\/}
compactification will be used in the case of the topological coarse
structure induced by a compactification $\overline{X}$ of a locally
compact space $X$.

The following lemma shows that Higson functions are preserved by
coarse maps.

\begin{lemma}\label{l:topological control} Let $\overline{X}$ be a
  compactification of a locally compact space $X$ with boundary
  $\partial X$. The following conditions are equivalent for any subset
  $E\subset X\times X$:
\begin{enumerate}

\item[\textup{(i)}] $E$ is $\overline{X}$-controlled.

\item[\textup{(ii)}] $\nabla_Ef\in\mathcal{B}_0(X)$ for every
  $f\in\mathcal{B}(X)$ having an extension $\bar
  f:\overline{X}\to\mathbf{C}$ that is continuous on the points of
  $\partial X$.

\item[\textup{(iii)}] $\nabla_Ef\in C_0(X)$ for every $f\in C_b(X)$
  having a continuous extension to $\overline{X}$.

\end{enumerate}
\end{lemma}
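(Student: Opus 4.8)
The implication (ii) $\Rightarrow$ (iii) is immediate: the class of functions considered in (iii) is a subclass of those in (ii) (continuous on all of $\overline{X}$ is stronger than continuous merely on the points of $\partial X$), and $C_0(X) = \mathcal{B}_0(X) \cap C_b(X)$, so the conclusion in (ii) restricts to give the conclusion in (iii). Thus the real content lies in the two remaining implications.

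For (i) $\Rightarrow$ (ii), suppose $E$ is $\overline{X}$-controlled, so that $\overline{E} \cap \partial(X\times X) \subset \Delta_{\partial X}$, and let $f$ have an extension $\bar f$ continuous at each point of $\partial X$. I would argue by contradiction to show $\nabla_E f$ vanishes at infinity. If not, there is some $\varepsilon > 0$ and a sequence $x_n \to \infty$ in $X$ (i.e. leaving every compact set) with $\nabla_E f(x_n) \ge \varepsilon$; hence for each $n$ there is $y_n$ with $(x_n, y_n) \in E$ and $|f(x_n) - f(y_n)| \ge \varepsilon/2$. By compactness of $\overline{X} \times \overline{X}$, pass to a subsequence so that $(x_n, y_n) \to (p, q) \in \overline{E}$. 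Since $x_n \to \infty$, the limit $p$ lies in $\partial X$, so $(p,q) \in \partial(X \times X)$, and by the control hypothesis $(p,q) \in \Delta_{\partial X}$, forcing $p = q \in \partial X$. Now continuity of $\bar f$ at the single boundary point $p = q$ gives $\bar f(x_n) \to \bar f(p)$ and $\bar f(y_n) \to \bar f(q) = \bar f(p)$, so $|f(x_n) - f(y_n)| \to 0$, contradicting $|f(x_n) - f(y_n)| \ge \varepsilon/2$.

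For (iii) $\Rightarrow$ (i), I would prove the contrapositive: assume $E$ is \emph{not} $\overline{X}$-controlled and construct a continuous $f \in C_b(X)$ extending continuously to $\overline{X}$ with $\nabla_E f \notin C_0(X)$. Since control fails, there is a point $(p,q) \in \overline{E} \cap \partial(X\times X)$ with $p \ne q$; at least one of $p, q$ lies in $\partial X$, and in fact we may take two distinct points $p \ne q$ of $\overline{X}$ that are limits of $(x_n, y_n) \in E$. Using that $\overline{X}$ is a compact Hausdorff space (hence normal), Urysohn's lemma yields a continuous $g : \overline{X} \to [0,1]$ separating $p$ and $q$, say $g(p) = 0$ and $g(q) = 1$; restrict to $f = g|_X \in C_b(X)$, which extends continuously to $\overline{X}$ by construction. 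Then $\nabla_E f(x_n) \ge |f(x_n) - f(y_n)| \to |g(p) - g(q)| = 1$, while $x_n \to \infty$ (since $p \in \partial X$), so $\nabla_E f$ does not vanish at infinity, i.e. $\nabla_E f \notin C_0(X)$.

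\textbf{The main obstacle} is the separation step in (iii) $\Rightarrow$ (i): I must ensure the limit point genuinely produces a \emph{pair of distinct points}, at least one of which is in $\partial X$, and that the separating Urysohn function restricts to a genuine element of $C_b(X)$ whose $E$-expansion is detected along a sequence escaping to infinity. The care required is in checking that the sequence $x_n$ indeed leaves every compact set—this follows because its limit $p$ lies in $\partial X = \overline{X} \setminus X$, so the $x_n$ cannot accumulate in the open subspace $X$—and in handling the case $q \in X$ versus $q \in \partial X$ uniformly, which the Hausdorff separation of the two distinct limit points $p \ne q$ in the compactum $\overline{X}$ resolves in one stroke.
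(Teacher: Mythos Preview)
Your overall strategy matches the paper's---contradiction for (i) $\Rightarrow$ (ii), contrapositive with a separating function for (iii) $\Rightarrow$ (i)---but there is a genuine gap: you rely on sequential compactness of $\overline{X}\times\overline{X}$ in both nontrivial implications. The compactification $\overline{X}$ is an arbitrary compact Hausdorff space, and such spaces need not be sequentially compact; the Stone--\v{C}ech compactification of $\mathbf{N}$ is the standard counterexample, and the paper itself later proves (Proposition~\ref{prop:g-delta-pt}) that no sequence in a proper metric space $M$ can converge to a point of its Higson corona $\nu M$. Thus the step ``by compactness \dots\ pass to a subsequence so that $(x_n,y_n)\to(p,q)$'' is unjustified, and likewise in (iii) $\Rightarrow$ (i) your description of $(p,q)\in\overline{E}$ as a sequential limit of pairs from $E$ need not hold.

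The repair is routine: replace sequences by nets throughout, or give a direct argument as the paper does for (i) $\Rightarrow$ (ii), working with an open neighborhood of $\Delta_{\partial X}$ on which $|\bar f(x)-\bar f(y)|<\varepsilon$ and using the control hypothesis to produce a compact $K\subset X$ outside of which $\nabla_Ef<\varepsilon$, with no limits taken at all. A secondary point: in (iii) $\Rightarrow$ (i) you write ``since $p\in\partial X$'', but you have only argued that \emph{at least one} of $p,q$ lies in $\partial X$; if it is $q$ rather than $p$, the first coordinates $x_n$ of your approximating pairs need not escape to infinity, and your bound on $\nabla_Ef(x_n)$ no longer witnesses failure of vanishing at infinity. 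The paper handles this asymmetry by invoking that the family of controlled sets is invariant under transposition, so one may arrange the boundary coordinate to be the first one.
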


\begin{proof}
  To prove that property~(i) implies property~(ii), suppose that $E$
  is $\overline{X}$-controlled, and assume that some
  $f\in\mathcal{B}(X)$ has an extension $\bar
  f:\overline{X}\to\mathbf{C}$ that is continuous on the points of
  $\partial X$. Since the function $(x,y)\mapsto\left|\bar f(x)-\bar
    f(y)\right|$ on $\overline{X}\times\overline{X}$ vanishes on
  $\Delta_{\partial X}$ and is continuous on the points of $\partial
  X\times\partial X$, there is some open neighborhood $\Omega$ of
  $\Delta_{\partial X}$ in $\overline{X}\times\overline{X}$ such that
  $\left|\bar f(x)-\bar f(y)\right|<\varepsilon$ for all
  $(x,y)\in\Omega$.  On the other hand, since $E$ is
  $\overline{X}$-controlled, there is some open neighborhood $U$ of
  $\partial X$ in $\overline{X}$ such that
  \[
  \overline{E}\cap(U\times\overline{X})\subset\Omega.
  \]
  If $K$ is the compact set $K=X\setminus U$, then $\nabla_Ef(x)<\varepsilon$
  for all $x\in X\setminus K=X\cap U$, and so $\nabla_Ef\in \mathcal{B}_0(X)$.

  Property~(iii) is a particular case of property~(ii).

  To prove that property~(iii) implies property~(i), assume that
  $\nabla_Ef\in C_0(X)$ for all $f\in C_b(X)$ that admit a continuous
  extension to $\overline X$.  If $E$ were not
  $\overline{X}$-controlled, there would be a pair of different
  points, $x\in\partial X$ and $y\in\overline X$, such that either
  $(x,y)$ or $(y,x)$ is in $\overline E$. Since the family of
  controlled sets is invariant by transposition \cite{Roe96},
  \cite{HigsonRoe00}, \cite{Roe03}, it may be assumed that
  $(x,y)\in\overline E$. Then, for any continuous function $\bar
  f:\overline{X}\to\mathbf{C}$ with $\bar f(x)\neq\bar f(y)$, the
  restriction $f=\bar f|_X$ would satisfy
  $$
  \liminf_{z\to x}\nabla_Ef(z)\ge\left|\bar f(x)-\bar f(y)\right|>0\;,
  $$
  which would be a contradiction. Therefore $E$ is
  $\overline{X}$-controlled.
\end{proof}

The following is a direct consequence of Lemma~\ref{l:topological
  control}, which is contained in \cite[Proposition~2.39]{Roe03}.

\begin{cor}\label{c:coarse compactification} Let $\overline X$ be a
  compactification of a proper coarse space $X$ with boundary
  $\partial X$. Then the following conditions are equivalent:
\begin{enumerate}

\item[\textup{(i)}] $\overline X$ is a coarse compactification of $X$.

\item[\textup{(ii)}] $\mathcal{B}_\nu(X)$ contains every function in
  $\mathcal{B}(X)$ that admits an extension to $\overline X$ that is
  continuous on the points of $\partial X$.

\item[\textup{(iii)}] $C_\nu(X)$ contains every continuous function
  $X\to\mathbf{C}$ that extends continuously to $\overline X$.
\end{enumerate}
\end{cor}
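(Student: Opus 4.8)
The plan is to derive all three equivalences from Lemma~\ref{l:topological control} after first translating ``coarse compactification'' into a statement about controlled sets. Unwinding Definition~\ref{defn:coarse-map} for the identity map $\iota$ from $X$ with its given coarse structure to $X$ equipped with the topological coarse structure of $\overline X$, condition~(\ref{defn:coarse-map-1}) amounts exactly to the requirement that every given-controlled $E\subset X\times X$ be $\overline X$-controlled (every controlled $E$ occurs as $(p,q)(S)$ for the two projections restricted to $S=E$). The remaining condition on bounded sets is automatic: if $B\subset X$ is nonempty and $B\times B$ is $\overline X$-controlled, then $\overline B\cap\partial X=\emptyset$, since otherwise, for $p\in\overline B\cap\partial X$ and any $v\in B$, one would have $(p,v)\in\overline{B\times B}=\overline B\times\overline B$ with $(p,v)\in\partial(X\times X)\setminus\Delta_{\partial X}$; hence $\overline B$ is compact and contained in $X$, so $B$ is bounded for the given proper structure. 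Thus $\overline X$ is a coarse compactification if and only if every given-controlled $E$ is $\overline X$-controlled, which is the bridge to the lemma.

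For the equivalence (i)$\Leftrightarrow$(ii), I would invoke part~(ii) of Lemma~\ref{l:topological control}: a set $E$ is $\overline X$-controlled precisely when $\nabla_Ef\in\mathcal{B}_0(X)$ for every $f\in\mathcal{B}(X)$ admitting an extension continuous on $\partial X$. So the statement ``every given-controlled $E$ is $\overline X$-controlled'' reads: for every such $f$ and every controlled $E$, one has $\nabla_Ef\in\mathcal{B}_0(X)$. Interchanging the two universal quantifiers and applying Definition~\ref{defn:higson-function}, this is exactly the assertion that every such $f$ lies in $\mathcal{B}_\nu(X)$, i.e. condition~(ii). No friction arises, because the lemma and the definition of $\mathcal{B}_\nu(X)$ are phrased with the same ideal $\mathcal{B}_0(X)$.

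The equivalence with~(iii) follows the same template via part~(iii) of Lemma~\ref{l:topological control}, and this is where I expect the one delicate point. The implication (i)$\Rightarrow$(iii) is immediate, since the functions in~(iii) form a subclass of those in~(ii) (a continuous extension to $\overline X$ is in particular continuous on $\partial X$) and $C_0(X)\subset\mathcal{B}_0(X)$. The hard part will be the converse: condition~(iii) only yields membership in $C_\nu(X)$, hence $\nabla_Ef\in\mathcal{B}_0(X)$ rather than $\nabla_Ef\in C_0(X)$, and $\nabla_Ef$ need not be continuous even when $f$ is, so part~(iii) cannot be applied as a black box in this direction. Instead I would argue by contraposition using the explicit obstruction from the proof of the lemma: if some given-controlled $E$ fails to be $\overline X$-controlled, there are distinct points $x\in\partial X$ and $y\in\overline X$ with $(x,y)\in\overline E$, and choosing a continuous $\bar f$ on $\overline X$ with $\bar f(x)\neq\bar f(y)$ gives, for $f=\bar f|_X$,
\[
\liminf_{z\to x}\nabla_Ef(z)\ge\left|\bar f(x)-\bar f(y)\right|>0\;.
\]
A strictly positive $\liminf$ toward a boundary point already precludes $\nabla_Ef\in\mathcal{B}_0(X)$, not merely $\nabla_Ef\in C_0(X)$, so the continuous function $f$, which extends continuously to $\overline X$, fails to lie in $C_\nu(X)$, contradicting~(iii). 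This sidesteps the continuity of $\nabla_Ef$ and closes the argument.
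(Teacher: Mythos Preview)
Your argument is correct and follows the route the paper indicates (the paper says only that the corollary is a ``direct consequence of Lemma~\ref{l:topological control}'' and gives no further details). You in fact go further, making explicit why the properness clause for the identity map is automatic, and more importantly identifying and resolving a subtlety the paper glosses over: for (iii)$\Rightarrow$(i), membership in $C_\nu(X)$ only yields $\nabla_E f\in\mathcal{B}_0(X)$, not the $\nabla_E f\in C_0(X)$ demanded by part~(iii) of the lemma, so one must reach into the lemma's proof---where the constructed witness actually has $\nabla_E f\notin\mathcal{B}_0(X)$---exactly as you do.
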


\begin{proposition}\label{p:topological control} Let $\overline{X}$ and
  $\overline{X'}$ be compactifications of locally compact spaces $X$
  and $X'$ with boundaries $\partial X$ and $\partial X'$,
  respectively. Then the following properties hold:
\begin{enumerate}

\item[\textup{(i)}] A map $\varphi:X\to X'$ is coarse if it has an
  extension $\bar\varphi:\overline{X}\to\overline{X'}$ that is continuous
  on the points of $\partial X$ and such that $\bar\varphi(\partial
  X)\subset\partial X'$.

\item[\textup{(ii)}] Let $\varphi,\psi:X\to X'$ be maps with extensions
  $\bar\varphi,\bar\psi:\overline{X}\to\overline{X'}$ satisfying the
  conditions of property~\textup{(i)}.
  Then $\varphi$ and $\psi$ are
  $\overline{X'}$-close if and only if $\bar\varphi=\bar\psi$ on
  $\partial X$.

\end{enumerate}
\end{proposition}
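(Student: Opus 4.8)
The plan is to verify directly the two defining conditions of a coarse map (Definition~\ref{defn:coarse-map}) for property~(i), and to analyze the closure of the appropriate image set inside $\overline{X'}\times\overline{X'}$ for property~(ii); in both cases I would reduce everything to the behavior of $\bar\varphi$ on $\partial X$. Throughout I would use the defining feature of the topological coarse structure, that a set $E\subset X\times X$ is controlled exactly when $\overline E\cap\partial(X\times X)\subset\Delta_{\partial X}$, together with the fact that $p,q\colon S\to X$ are close iff the image of $(p,q)$ is controlled. I would first record the elementary observation that, for this coarse structure, a subset $B\subset X$ is bounded iff its closure in $\overline X$ is a compact subset of $X$, equivalently $\overline B\cap\partial X=\emptyset$; this drops out by inspecting when $B\times B$ is controlled.

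For the metric-properness half of~(i) I would argue by contradiction: if $B\subset X'$ is bounded but $\varphi^{-1}(B)$ is not, then the closure of $\varphi^{-1}(B)$ in $\overline X$ meets $\partial X$, so some net $x_\alpha\in\varphi^{-1}(B)$ converges to a point $\xi\in\partial X$. Continuity of $\bar\varphi$ at $\xi$ together with $\bar\varphi(\partial X)\subset\partial X'$ forces $\varphi(x_\alpha)\to\bar\varphi(\xi)\in\partial X'$, contradicting that the points $\varphi(x_\alpha)$ all lie in the relatively compact set $B\subset X'$. For the closeness-preservation half, given a controlled $E$ I would show $(\varphi\times\varphi)(E)$ is controlled: starting from a limit point $(\xi',\eta')$ of $(\varphi\times\varphi)(E)$ lying in $\partial(X'\times X')$, I pass, by compactness of $\overline X\times\overline X$, to a subnet of the underlying points $(x_\alpha,y_\alpha)\in E$ converging to some $(\xi,\eta)\in\overline E$. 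Continuity of $\bar\varphi$ at the relevant points identifies $(\xi',\eta')$ with $(\bar\varphi(\xi),\bar\varphi(\eta))$; since $(\xi',\eta')\in\partial(X'\times X')$ and $\bar\varphi(\partial X)\subset\partial X'$, one checks that $(\xi,\eta)\in\partial(X\times X)$, whence $\xi=\eta\in\partial X$ by controlledness of $E$, and therefore $\xi'=\eta'=\bar\varphi(\xi)\in\partial X'$, i.e. $(\xi',\eta')\in\Delta_{\partial X'}$.

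For~(ii) the ``only if'' direction is the cleaner one. Assuming $\varphi$ and $\psi$ are $\overline{X'}$-close, fix $\xi\in\partial X$ and use density of $X$ in $\overline X$ to pick a net $x_\alpha\in X$ with $x_\alpha\to\xi$; continuity of $\bar\varphi$ and $\bar\psi$ at $\xi$ gives $(\varphi(x_\alpha),\psi(x_\alpha))\to(\bar\varphi(\xi),\bar\psi(\xi))$, a point of $\partial X'\times\partial X'$ lying in the closure of the controlled image of $(\varphi,\psi)$, hence in $\Delta_{\partial X'}$, so $\bar\varphi(\xi)=\bar\psi(\xi)$. Conversely, assuming $\bar\varphi=\bar\psi$ on $\partial X$, I would show the image of $(\varphi,\psi)$ is controlled by the same subnet argument used in~(i): any boundary limit point of the image is produced by a net $x_\alpha\to\xi$ with $\xi\in\partial X$, and there $\bar\varphi(\xi)=\bar\psi(\xi)\in\partial X'$ places the limit on $\Delta_{\partial X'}$.

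The step I expect to be the main obstacle is the closeness-preservation argument in~(i), and its mirror in the ``if'' direction of~(ii): one must ensure that a boundary limit point of the image can arise only from a boundary limit point of $E$, which is precisely where continuity of the extension at $\partial X$ is combined with the compactness of $\overline X\times\overline X$ to extract convergent subnets. The potentially delicate case is that of interior limit points, where the net $(x_\alpha,y_\alpha)$ converges to a point of $X\times X$; these must be shown to contribute no points of $\partial(X'\times X')$, which is exactly what continuity of $\varphi=\bar\varphi|_X$ guarantees by keeping the limits of $\varphi(x_\alpha)$ and $\varphi(y_\alpha)$ inside $X'$.
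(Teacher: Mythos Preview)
Your treatment of metric properness in~(i) and of the ``only if'' direction of~(ii) follows the paper's line (closure/net arguments with only cosmetic differences). For the closeness-preservation half of~(i), however, the paper takes a different route: it invokes Lemma~\ref{l:topological control}, observing that for any $f\in\mathcal B(X')$ with an extension $\bar f$ continuous at $\partial X'$ the function $\bar f\circ\bar\varphi$ extends $f\circ\varphi$ and is continuous at $\partial X$, so that $\nabla_E(f\circ\varphi)\in\mathcal B_0(X)$, and then reads off controlledness of $(\varphi\times\varphi)(E)$ from the same lemma. Likewise, for the ``if'' direction of~(ii) the paper first applies~(i) to know $\varphi$ is coarse, and uses this (coarse maps send bounded sets to bounded sets) to force any net $(x_\Omega)$ with $(\varphi(x_\Omega))$ accumulating at $\partial X'$ to itself be unbounded in $X$, hence to accumulate at $\partial X$; only then does continuity of $\bar\varphi,\bar\psi$ at $\partial X$ finish the job.

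Your final paragraph contains a genuine gap. You dispose of the case $(\xi,\eta)\in X\times X$ by invoking ``continuity of $\varphi=\bar\varphi|_X$'', but no continuity of $\varphi$ on $X$ is assumed; only continuity of $\bar\varphi$ at points of $\partial X$ is given (the remark immediately following the proposition stresses exactly this point). Without that assumption, nothing in your argument excludes a net $x_\alpha\to\xi\in X$ with $\varphi(x_\alpha)\to\xi'\in\partial X'$. For a concrete instance, take $X=X'=[0,\infty)$ compactified to $[0,\infty]$, set $\varphi(1/k)=k$ for $k\in\mathbf N$ and $\varphi(x)=x$ otherwise, with $\bar\varphi(\infty)=\infty$; then $\bar\varphi$ is continuous at $\infty$ and maps $\partial X$ into $\partial X'$, yet $1/k\to 0\in X$ while $\varphi(1/k)\to\infty\in\partial X'$. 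So neither in~(i) nor in the ``if'' half of~(ii) can the ``interior limit'' case be discarded on the grounds you propose. The paper's arguments avoid this trap: in~(i) by passing through Lemma~\ref{l:topological control} rather than chasing individual limit points, and in~(ii) by first using~(i) to get that $\varphi$ sends bounded sets to bounded sets, which rules out interior accumulation of $(x_\alpha)$ whenever $(\varphi(x_\alpha))$ accumulates at the boundary.
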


\begin{proof}
  Let $\varphi:X\to X'$ be a map satisfying the conditions of
  property~(i).  If $B$ is a bounded subset of $X'$, then $B$ has
  compact closure in $X'$, and thus $\overline{B}\cap\partial
  X'=\emptyset$. So
  \[
  \bar\varphi(\overline{\varphi^{-1}(B)}\cap\partial X)
  \subset\overline{\bar\varphi\left(\varphi^{-1}(B)\right)}\cap\partial X'
  \subset \overline{B}\cap\partial X'=\emptyset
  \]
  because $\bar\varphi$ is continuous on the points of $\partial X$.  It
  follows that $\overline{\varphi^{-1}(B)}\cap\partial X=\emptyset$, and
  thus $\varphi^{-1}(B)$ has compact closure in $X$; that is,
  $\varphi^{-1}(B)$ is bounded in $X$.

  Let $E$ be a controlled subset of $X\times X$, and let $f:X'\to\mathbf{C}$
  be a bounded function that admits an extension $\bar f$ to
  $\overline{X'}$ that is continuous on the points of $\partial X'$.
  Lemma~\ref{l:topological control} implies that
  $\nabla_{(\varphi\times\varphi)(E)}f=\nabla_E(f\circ\varphi)\in\mathcal{B}_0(X)$
  because $\bar f\circ\bar\varphi$ is an extension of the function
  $f\circ\varphi$ that is continuous at the points of $\partial X$.  It
  follows that $(\varphi\times\varphi)(E)$ is a controlled subset of
  $X'\times X'$ by Lemma~\ref{l:topological control}. Therefore $\varphi$
  is a coarse map, which establishes property~(i).

  Let $\varphi,\psi:X\to X'$ be maps with extensions
  $\bar\varphi,\bar\psi:\overline{X}\to\overline{X'}$ satisfying the
  conditions of property~(i). Suppose first that $\bar\varphi=\bar\psi$
  on $\partial X$, and let $E=\{(\varphi(x),\psi(x))\ |\ x\in X\}$. Fix
  any point $(x',y')\in\overline{E}\cap\Delta_{\partial(X'\times
    X')}$.  Thus, for each neighborhood $\Omega$ of $(x',y')$ in
  $\overline E$, there is some point $x_\Omega\in X$ so that
  $(\varphi(x_\Omega),\psi(x_\Omega))\in\Omega$; such points $x_\Omega$
  form a net $(x_\Omega)$ in $X$. Suppose \textit{e.g.} that
  $x'\in\partial X'$. Then the net $(\varphi(x_\Omega))$ is unbounded in
  $X'$, and thus the net $(x_\Omega)$ is unbounded in $X$ because
  $\varphi$ is a coarse map according to property~(i). So there is an
  accumulation point $x$ of $(x_\Omega)$ in $\partial X$. Since
  $\bar\varphi$ and $\bar\psi$ are continuous at $x$, it follows that
  $\left(\bar\varphi(x),\bar\psi(x)\right)$ is an accumulation point of
  the net $(\varphi(x_\Omega),\psi(x_\Omega))$, which converges to
  $(x',y')$.  Hence
  $(x',y')=\left(\bar\varphi(x),\bar\psi(x)\right)\in\Delta_{\partial
    X'}$ because $\bar\varphi=\bar\psi$ on $\partial X$. This shows that
  $E$ is $\overline{X'}$-controlled, and thus $\varphi$ is
  $\overline{X'}$-close to $\psi$.

  Assume now that $\varphi$ is $\overline{X'}$-close to $\psi$;
  \textit{i.e.}, the set $E=\{(\varphi(x),\psi(x))\ |\ x\in X\}$ is
  $\overline{X'}$-controlled.  The conditions on $\bar\varphi$
  and $\bar\psi$ imply that
  \begin{align*}
    \left(\bar\varphi\times\bar\psi\right)(\Delta_{\partial X})
    &=\left(\bar\varphi\times\bar\psi\right)\left(\overline{\Delta_X}\cap(\partial
      X\times\partial X)\right)\\
    &\subset\overline{(\varphi\times\psi)(\Delta_X)}
    \cap(\partial X'\times\partial X')\\
    &=\overline{E}\cap(\partial X'\times\partial X')\\
    &\subset\Delta_{\partial X'}\;,
  \end{align*}
  which establishes that $\bar\varphi=\bar\psi$ on $\partial X$, and completes the
  proof of property~(ii).
\end{proof}

\begin{remark}
  The above result can be compared with the ``if'' part of
  \cite[Proposition~2.33]{Roe03}. Continuity on $X$ is not needed in
  Proposition~\ref{c:coarse compactification}, only the continuity on
  $\partial X$ is used, and properness is replaced by the condition to
  preserve the boundary of the compactifications. The reciprocal of
  property~(i) holds when the compactifications are first
  countable\footnote{Second countability is required in
    \cite[Proposition~2.33]{Roe03}, but only first countability is
    used in the proof.} \cite[Proposition~2.33]{Roe03}, and this
  assumption is necessary for the reciprocal
  \cite[Example~2.34]{Roe03}.
\end{remark}

According to Corollary~\ref{c:coarse compactification}, there is a
maximal coarse compactification $X^\nu$, which is the maximal ideal
space of $C_\nu(X)$; it is called the {\em Higson compactification\/}
of $X$, and its boundary $\nu X$ is called the {\em Higson corona\/}.
Since each Higson function on $X$ has a unique extension to $X^\nu$
that is continuous on the points of $\nu X$, there are canonical
isomorphisms
\begin{equation}\label{e:C(nu X)}
C(\nu X)\cong C_\nu(X)/C_0(X)\cong\mathcal{B}_\nu(X)/\mathcal{B}_0(X)\;.
\end{equation}
This isomorphism can be used to define the Higson boundary $\nu X$ for
any coarse space $X$ \cite{Roe03}.

For subsets $A$ of $X$ or of $X\times X$, the notation $A^\nu$ will be
used to indicate the closure of $A$ in $X^\nu$ or in $X^\nu\times
X^\nu$, respectively. The notation $\nu(X\times X)=(\nu X\times
X)\cup(X\times\nu X)$ will be also used.

The following lemma is contained in the proof of
\cite[Proposition~2.41]{Roe03}.

\begin{lemma}\label{l:coarse maps preserve Higson functions} Let $X$
  and ${X^{\prime}}$ be proper coarse spaces and let $\varphi:X\to
  {X^{\prime}}$ be a coarse map. Then:
\begin{enumerate}

\item[\textup{(i)}] $f\circ\varphi\in\mathcal{B}_0(X)$ for all
  $f\in\mathcal{B}_0({X^{\prime}})$, and

\item[\textup{(ii)}] $f\circ\varphi\in\mathcal{B}_\nu(X)$ for all
  $f\in\mathcal{B}_\nu({X^{\prime}})$.

\end{enumerate}
\end{lemma}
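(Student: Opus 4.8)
The plan is to establish (i) first and then bootstrap (ii) from it, the bridge between the two being the fact that a coarse map pushes controlled sets forward to controlled sets.

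For (i), I would take $f\in\mathcal{B}_0(X')$; the composite $f\circ\varphi$ is clearly bounded, so only vanishing at infinity needs checking. Given $\varepsilon>0$, choose a compact set $K\subset X'$ with $|f|<\varepsilon$ off $K$. Since $X'$ is a proper coarse space, $K$ is bounded, so property~(ii) of Definition~\ref{defn:coarse-map} gives that $\varphi^{-1}(K)$ is bounded in $X$, hence has compact closure $L$. For $x\in X\setminus L$ one has $\varphi(x)\notin K$, whence $|f(\varphi(x))|<\varepsilon$; thus $f\circ\varphi$ vanishes at infinity and lies in $\mathcal{B}_0(X)$.

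The key step for (ii) is to show that $\varphi$ carries controlled sets forward to controlled sets. Let $E\subset X\times X$ be controlled. The two factor projections $\pi_1,\pi_2\colon E\to X$ are close, since the image of the pair $(\pi_1,\pi_2)=\mathrm{id}_E$ is exactly $E$. By property~(i) of Definition~\ref{defn:coarse-map}, the composites $\varphi\circ\pi_1$ and $\varphi\circ\pi_2$ are close maps $E\to X'$, which means precisely that their joint image $(\varphi\times\varphi)(E)=\{(\varphi(x),\varphi(y))\mid(x,y)\in E\}$ is a controlled subset $F\subset X'\times X'$. With $F$ in hand, I would argue by domination: for $f\in\mathcal{B}_\nu(X')$, each pair $(x,y)\in E$ satisfies $(\varphi(x),\varphi(y))\in F$, so $|f(\varphi(x))-f(\varphi(y))|\le\nabla_Ff(\varphi(x))$, and taking the supremum over those $y$ with $(x,y)\in E$ yields the pointwise bound $\nabla_E(f\circ\varphi)\le(\nabla_Ff)\circ\varphi$. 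Since $f$ is a Higson function and $F$ is controlled, $\nabla_Ff\in\mathcal{B}_0(X')$, and part~(i) gives $(\nabla_Ff)\circ\varphi\in\mathcal{B}_0(X)$. As $\nabla_E(f\circ\varphi)$ is a nonnegative bounded function dominated by a member of $\mathcal{B}_0(X)$, it too vanishes at infinity; hence $\nabla_E(f\circ\varphi)\in\mathcal{B}_0(X)$ for every controlled $E$, i.e. $f\circ\varphi\in\mathcal{B}_\nu(X)$.

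The only genuinely nontrivial point is the forward-pushing of controlled sets, which rests on identifying $\mathrm{id}_E$ with the pair of projections and then reading off controlledness of the image directly from the characterization of closeness in terms of controlled sets; the remainder is the routine passage from a pointwise domination to vanishing at infinity.
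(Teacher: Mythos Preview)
Your argument is correct. The paper does not actually prove this lemma: it merely states that the result is contained in the proof of \cite[Proposition~2.41]{Roe03}. Your proof is the standard direct verification---use metric properness (Definition~\ref{defn:coarse-map}(ii)) together with properness of the coarse structure for part~(i), push controlled sets forward via Definition~\ref{defn:coarse-map}(i) and then dominate $\nabla_E(f\circ\varphi)$ by $(\nabla_Ff)\circ\varphi$ for part~(ii)---and it is exactly the argument one extracts from Roe's proof.
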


\begin{proposition}\label{p:extension of coarse maps} Let $X$ and
  ${X^{\prime}}$ be proper coarse spaces. Any coarse map $\varphi:X\to
  {X^{\prime}}$ has a unique extension $\bar\varphi:X^\nu\to
  {X'}^\nu$ that is continuous on the points of $\nu X$ and such
  that $\bar\varphi(\nu X)\subset\nu {X^{\prime}}$.
\end{proposition}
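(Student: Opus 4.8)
The plan is to construct the extension $\bar\varphi$ using the Gelfand-theoretic identification of $X^\nu$ as the maximal ideal space of $C_\nu(X)$. Since $\varphi$ is coarse, Lemma~\ref{l:coarse maps preserve Higson functions} guarantees that pullback $f \mapsto f\circ\varphi$ maps $C_\nu(X')$ into $\mathcal{B}_\nu(X)$, and in fact into $C_\nu(X)$ when $f$ is continuous (here I would note that continuity of $f\circ\varphi$ requires $\varphi$ to be continuous, or else I work at the level of the Higson \emph{corona} where continuity on $X$ is irrelevant). The cleanest route avoids assuming $\varphi$ continuous: I would instead define $\bar\varphi$ directly on points of $\nu X$ by a net/ultrafilter argument, using properness to control where limits land.

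First I would establish the map on the corona. Given $\omega \in \nu X$, choose a net $(x_\alpha)$ in $X$ converging to $\omega$ in $X^\nu$; since $\varphi$ is metrically proper and $(x_\alpha)$ escapes every bounded set, the net $(\varphi(x_\alpha))$ must escape every bounded set of $X'$, hence accumulates only on $\nu X'$. I would show that all accumulation points of $(\varphi(x_\alpha))$ in ${X'}^\nu$ coincide: if $f \in C_\nu(X')$, then $f\circ\varphi \in \mathcal{B}_\nu(X)$ by Lemma~\ref{l:coarse maps preserve Higson functions}(ii), so its extension to $X^\nu$ is determined at $\omega$, which forces $\lim_\alpha f(\varphi(x_\alpha))$ to exist and be independent of the net. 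Evaluating against all $f \in C_\nu(X')$ pins down a unique point $\bar\varphi(\omega) \in \nu X'$ via the Gelfand correspondence. On $X$ itself I simply set $\bar\varphi = \varphi$.

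Next I would verify continuity of $\bar\varphi$ at the points of $\nu X$. This reduces to checking that for every $g \in C_\nu(X')$, the composite $g\circ\bar\varphi$ is continuous at each $\omega\in\nu X$; but $g\circ\bar\varphi$ agrees with the extension of the Higson function $g\circ\varphi$ (granted by Lemma~\ref{l:coarse maps preserve Higson functions}), and that extension is by construction continuous on $\nu X$. Since $C_\nu(X')$ together with $C_0(X')$ separates points and generates the topology of ${X'}^\nu$, continuity against this separating family yields continuity of $\bar\varphi$ at $\omega$. The containment $\bar\varphi(\nu X) \subset \nu X'$ is exactly what the properness argument in the previous paragraph delivered.

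Finally, uniqueness follows because $X$ is dense in $X^\nu$ and the values of any continuous-on-$\nu X$ extension are determined by limits along nets in $X$, which my construction already fixed. The main obstacle I anticipate is the well-definedness in the second step: showing that the accumulation point $\bar\varphi(\omega)$ is genuinely independent of the approximating net $(x_\alpha)$. This is where Lemma~\ref{l:coarse maps preserve Higson functions}(ii) does the essential work, since it is precisely the Higson condition on $f\circ\varphi$ that prevents two different nets converging to $\omega$ from producing two different limits in ${X'}^\nu$; without it, a merely proper $\varphi$ could oscillate. I would therefore spend most of the care verifying that the family of extended functions $\{\overline{f\circ\varphi} : f \in C_\nu(X')\}$ is consistent at $\omega$ and jointly defines a single Gelfand character, i.e.\ a single point of ${X'}^\nu$.
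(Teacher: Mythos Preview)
Your proposal is correct and rests on the same key ingredient as the paper's proof, namely Lemma~\ref{l:coarse maps preserve Higson functions} together with the isomorphism~\eqref{e:C(nu X)}. The paper's argument is more compressed: it observes that $\varphi^*:\mathcal{B}_\nu(X')\to\mathcal{B}_\nu(X)$, $f\mapsto f\circ\varphi$, sends $\mathcal{B}_0(X')$ into $\mathcal{B}_0(X)$, hence descends via~\eqref{e:C(nu X)} to a homomorphism $C(\nu X')\to C(\nu X)$, and then invokes Gelfand duality as a black box to obtain the continuous map $\nu X\to\nu X'$, which glued to $\varphi$ on $X$ gives $\bar\varphi$. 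Your net-and-character construction is precisely this Gelfand step unpacked pointwise; it buys you explicitness (you see concretely why properness forces $\bar\varphi(\omega)\in\nu X'$ and why the limit is independent of the net) at the cost of having to verify well-definedness and multiplicativity of the character by hand, whereas the paper's quotient-algebra route produces a genuine unital algebra homomorphism automatically and lets the Gelfand functor handle the rest.
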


\begin{proof}
  According to Lemma~\ref{l:coarse maps preserve Higson functions},
  $\varphi$ induces a homomorphism
  \(\varphi^*:\mathcal{B}_\nu(X')\to\mathcal{B}_\nu(X)\) defined by
  $\varphi^*(f)=f\circ\varphi$, which maps
  $\mathcal{B}_0({X^{\prime}})$ to
  $\mathcal{B}_0(X)$. By~\eqref{e:C(nu X)}, $\varphi^*$ induces a
  homomorphism $C(\nu {X^{\prime}})\to C(\nu X)$. Then, by considering
  maximal ideal spaces, we get a map $\bar\varphi:X^\nu\to{X'}^\nu$,
  which extends $\varphi$ and maps $\nu X$ into $\nu
  {X^{\prime}}$. The continuity of $\bar\varphi$ on the points of $\nu
  X$ is a consequence of the fact that any Higson function has a
  unique extension to the Higson compactification which is continuous
  on the Higson corona.
\end{proof}

\begin{remark}
  The above result is slightly stronger than
  \cite[Proposition~2.41]{Roe03}, which only shows the continuity of
  the restriction $\bar\varphi:\nu X\to\nu X'$.
\end{remark}

Sometimes the Higson compactification can be easily determined, as
shown by the following result\footnote{The statement of
  \cite[Proposition~2.48]{Roe03} requires second countability but,
  indeed, its proof only uses first countability.}
\cite[Proposition~2.48]{Roe03}.

\begin{proposition}\label{p:Higson 1} Let $X$ be a proper coarse space with
  the topological coarse structure induced by a first-countable
  compactification $\overline{X}$ of $X$ with boundary $\partial X$.
  Then $\overline{X}$ and $X^\nu$ are equivalent compactifications of
  $X$, and thus $\partial X$ is homeomorphic to $\nu X$.
\end{proposition}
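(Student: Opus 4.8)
The plan is to show that the Higson compactification $X^\nu$ and the given compactification $\overline{X}$ correspond to the same subalgebra of $C_b(X)$, namely the algebra $\mathcal{A}$ of continuous functions on $X$ that extend continuously to $\overline{X}$. Equivalence of the two compactifications---and hence the homeomorphism $\partial X\cong\nu X$---then follows from the Gelfand correspondence between compactifications and closed subalgebras of $C_b(X)$ recalled in the Introduction, since $X^\nu$ is by definition the maximal ideal space of $C_\nu(X)$.

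One inclusion is immediate. As the coarse structure on $X$ is by hypothesis the topological coarse structure induced by $\overline{X}$, the identity $X\to X$ is trivially a coarse map, so $\overline{X}$ is a coarse compactification. Corollary~\ref{c:coarse compactification} then yields $\mathcal{A}\subset C_\nu(X)$, and this inclusion of algebras induces a continuous surjection $X^\nu\to\overline{X}$ restricting to the identity on $X$. It remains to prove the reverse inclusion $C_\nu(X)\subset\mathcal{A}$, that is, that every continuous Higson function extends continuously to $\overline{X}$; this is the crux of the argument, and the point where first countability enters.

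So fix $f\in C_\nu(X)$. The key observation is that whenever $x_n\to\omega$ and $y_n\to\omega$ in $\overline{X}$ with $\omega\in\partial X$ and $x_n,y_n\in X$, the set $E=\{(x_n,y_n)\mid n\in\mathbf{N}\}$ is $\overline{X}$-controlled, because $(x_n,y_n)\to(\omega,\omega)$ forces $\overline{E}\cap\partial(X\times X)\subset\{(\omega,\omega)\}\subset\Delta_{\partial X}$. Since $f$ is a Higson function, $\nabla_Ef\in C_0(X)$, and because $x_n$ eventually leaves every compact set we get $|f(x_n)-f(y_n)|\le\nabla_Ef(x_n)\to0$. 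Applying this to an interleaving of two subsequences of a single sequence $x_n\to\omega$ shows that all subsequential limits of $f(x_n)$ coincide, so $\lim_n f(x_n)$ exists; applying it to two different sequences approaching the same $\omega$ shows this limit depends only on $\omega$. This defines an extension $\bar{f}:\overline{X}\to\mathbf{C}$ of $f$.

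Continuity of $\bar f$ is then checked sequentially, which is legitimate as $\overline{X}$ is first countable. At points of $X$ continuity is clear, since $X$ is open in $\overline{X}$ and $f$ is continuous. At a point $\omega\in\partial X$, given $\omega_k\to\omega$ in $\overline{X}$, I would fix a neighborhood basis at $\omega$ and use the defining limit for each $\bar f(\omega_k)$ to select points $x_k\in X$ lying in the $k$-th basic neighborhood of $\omega$ with $|f(x_k)-\bar f(\omega_k)|<1/k$; then $x_k\to\omega$, so $f(x_k)\to\bar f(\omega)$, whence $\bar f(\omega_k)\to\bar f(\omega)$. Thus $\bar f\in C(\overline{X})$ and $f\in\mathcal{A}$, completing the reverse inclusion. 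The main obstacle is precisely this diagonal selection: producing, for a sequence of boundary points, approximating interior points that still converge to $\omega$, which is exactly what first countability of $\overline{X}$ makes available.
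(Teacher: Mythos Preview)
The paper does not supply its own proof of this proposition; it simply cites \cite[Proposition~2.48]{Roe03} (noting in a footnote that Roe's hypothesis of second countability can be relaxed to first countability). Your argument is correct and is essentially the standard proof one finds in Roe: show that $C_\nu(X)$ coincides with the algebra of continuous functions extending to $\overline{X}$, using for the nontrivial inclusion the fact that any pair of sequences in $X$ converging to the same boundary point forms a controlled set, together with first countability to pass from sequential limits to genuine continuity of the extension.

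One minor slip: for $f\in C_\nu(X)$ you only know $\nabla_Ef\in\mathcal{B}_0(X)$, not $C_0(X)$, since $\nabla_Ef$ need not be continuous. This is harmless---vanishing at infinity is all you use---but worth writing correctly.
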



The hypothesis of this proposition, that the coarse structure be
induced by a first-countable compactification, is very strong, as the
following proposition shows.

\begin{proposition}\label{prop:g-delta-pt}
  Let $(M,d)$ be a proper metric space, and let $M^\nu$ be its
  Higson compactification. A point $p$ in $M^\nu$ is in $M$ if and
  only if the set $\{p\}$ is a $G_\delta$-set.
\end{proposition}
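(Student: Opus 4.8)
The plan is to prove the two implications separately, the forward one being elementary and the reverse one the substance of the proposition. For the easy direction, suppose $p\in M$. Since $(M,d)$ is a metric space, $\{p\}=\bigcap_{n\ge1}B(p,1/n)$, where $B(p,1/n)$ denotes the open metric ball. As $M$ is open in its compactification $M^\nu$, each ball $B(p,1/n)$ is open in $M^\nu$, so $\{p\}$ is a countable intersection of open subsets of $M^\nu$, i.e. a $G_\delta$-set.

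For the converse I would argue by contraposition, assuming $p\in\nu M$ while $\{p\}$ is a $G_\delta$-set, and aim for a contradiction. Since $M^\nu$ is compact Hausdorff, hence normal, the closed $G_\delta$-set $\{p\}$ is a zero-set: there is $f\in C(M^\nu,[0,1])$ with $f^{-1}(0)=\{p\}$. By the identification $C(M^\nu)\cong C_\nu(M)$ given by restriction (see \eqref{e:C(nu X)}), the function $g:=f|_M$ is a continuous Higson function with $g>0$ on $M$ (because $p\notin M$), while $g$ tends to $0$ along any net in $M$ converging to $p$. In particular, for every $\varepsilon>0$ the set $S_\varepsilon=\{x\in M: g(x)<\varepsilon\}$ is unbounded, since otherwise its closure would be a compact subset of $M$ not containing $p$, contradicting $p\in\overline{S_\varepsilon}$. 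Using unboundedness of the $S_{1/n}$ together with properness of $M$, I would inductively pick $x_n\in S_{1/n}$ with $d(x_m,x_n)>\max(m,n)$ for $m<n$; this is possible because $S_{1/n}$ meets the complement of any ball, while properness makes the relevant finite configuration relatively compact. The resulting sequence satisfies $g(x_n)\to0$, leaves every compact set, and is \emph{coarsely divergent}, meaning $\rho_n:=\min_{m\neq n}d(x_n,x_m)\to\infty$; in particular it is closed and discrete in $M$, so all of its limit points lie in $\nu M$.

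The heart of the argument, and the step I expect to be the main obstacle, is to manufacture from this sequence a continuous Higson function that \emph{separates} its even and odd terms. Setting $R_n=\tfrac18\rho_n$, the balls $B(x_n,2R_n)$ are pairwise disjoint and $R_n\to\infty$, and I would define $h:M\to[0,1]$ to equal the tent function $a_n\,\max\{0,\min\{1,2-d(\cdot,x_n)/R_n\}\}$ on $B(x_n,2R_n)$ and $0$ elsewhere, where $a_n=0$ for $n$ even and $a_n=1$ for $n$ odd. On the $n$-th ball $h$ is $(1/R_n)$-Lipschitz, and for $x$ far out the ball $B(x,r)$ meets at most one support ball, of index $n$ with $R_n$ large, whence $\nabla_r h(x)\le r/R_n\to0$ as $x\to\infty$. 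Thus $\nabla_r h\in\mathcal B_0(M)$ for every $r>0$ and $h$ is a continuous Higson function. The crucial point is that coarse divergence is precisely what lets the transition radii $R_n$ grow without bound, which forces the expansions $\nabla_r h$ to vanish at infinity; a separating function of fixed width would only be Lipschitz, not Higson.

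Finally I would extract the contradiction. Let $\bar f,\bar h\in C(M^\nu)$ be the continuous extensions of $g,h$. By compactness the subsequences $(x_{2k})$ and $(x_{2k+1})$ have limit points $q_0,q_1\in\nu M$, and continuity of $\bar h$ gives $\bar h(q_0)=\lim_k h(x_{2k})=0$ and $\bar h(q_1)=\lim_k h(x_{2k+1})=1$, so $q_0\neq q_1$. On the other hand $q_0$ and $q_1$ are limit points of $(x_n)$ along indices tending to infinity, and $g(x_n)<1/n\to0$, so $\bar f(q_0)=\bar f(q_1)=0$; hence $q_0,q_1\in\bar f^{-1}(0)=\{p\}$, forcing $q_0=q_1=p$. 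This contradicts $q_0\neq q_1$, which completes the proof.
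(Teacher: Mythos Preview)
Your proof is correct and follows essentially the same strategy as the paper: obtain a coarsely divergent sequence in $M$ accumulating at $p$, build an oscillating tent-type Higson function supported on disjoint balls of growing radii about its terms, and contradict continuous extendability at $p$. The paper reaches the sequence a bit more directly---using that a $G_\delta$ point in a compact Hausdorff space has a countable neighbourhood base, hence is the limit of a sequence from the dense subset $M$---while you route through the zero-set function and reuse it at the end to pin both cluster points to $p$; otherwise the arguments coincide.
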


\begin{proof}
  The ``only if'' part is elementary. To prove the ``if'' part, let
  $p\in M^\nu$ be such that $\{p\}$ is a $G_\delta$ set. Then there is
  a sequence $(x_n)$ in $M$ that converges to $p$.  Suppose that
  $p\not\in M$; i.e., $p\in\nu M$. Passing to a subsequence if
  necessary, it may be assumed that there is a sequence of positive
  real numbers $r_n\uparrow\infty$ such that the metric balls
  $B(x_n,r_n)$ are mutually disjoint.  Let $f:M\to \mathbf{R}$ be the
  function given by
  \[
  f(x) = \left\{ \begin{array}{ll}
      \displaystyle{(-1)^n\,\frac{r_n-d(x,x_n)}{r_n}} &
      \text{$x$ in $B(x_n,r_n)$}\\
      0 & \text{otherwise.}\end{array} \right.
  \]
  Then $f$ extends to a continuous function $\bar{f}$ on $M^\nu$, and
  so $\displaystyle\lim_{n\to \infty} \bar{f}(x_n)=\bar{f}(p)$. But
  the definition of $f$ implies that $\bar{f}(x_n)=(-1)^n$, so the
  limit $\displaystyle\lim_{n\to \infty} \bar{f}(x_n)$ does not exist.
\end{proof}

\begin{remark}
\begin{enumerate}[(i)]
\item The argument of \cite[Example~2.53]{Roe03} can also be used to
  show that the point $p$ (in proof above) is in $M$.

\item $G_\delta$ properties are common in the study of the structure
  of the Stone-C\v{e}ch compactification of spaces (e.g.,
  Walker~\cite{Walker74}). The property brought to light here also
  plays a role in Nakai's work on the Royden compactification of
  Riemann surfaces~\cite{Nakai60}.
\end{enumerate}
\end{remark}

\begin{proposition}\label{cor:g-delta-pt}
  Let $(M,d)$ be a non-compact proper metric space.  Let $W\subset M$
  be a subset that contains metric balls of arbitrarily large radius.
  Then the closure of $W$ in $M^\nu$ is a neighborhood of a point in
  $\nu M$.
\end{proposition}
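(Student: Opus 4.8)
The plan is to produce, inside $W$, an infinite family of pairwise disjoint balls whose radii tend to infinity, and then to assemble from them a single continuous Higson function $g$ whose support lies in $W$ and which equals $1$ on all the centers. An accumulation point of the centers on the corona will then have an open neighborhood, namely a superlevel set of the continuous extension $\bar g$, that is contained in the closure of $W$. This is a ``one-signed'' variant of the tent-function construction used in Proposition~\ref{prop:g-delta-pt}. First I would dispose of the trivial case $W=M$, in which $\overline{W}^{\,\nu}=M^\nu$ is a neighborhood of every point of the nonempty corona $\nu M$. So I assume $W\neq M$ and fix a point $o\in M\setminus W$.

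The main obstacle is the extraction, and the key observation is that large balls inside $W$ cannot all be centered in a bounded region. Precisely, I claim that for all $\rho,D>0$ there is a ball $B(c,\rho')\subset W$ with $\rho'\ge\rho$ and $d(o,c)>D$. Indeed, if every ball $B(x,R)\subset W$ with $R\ge\rho_0$ had $d(o,x)\le D_0$, then, since $W$ contains balls of arbitrarily large radius, letting $R\to\infty$ would give $B(o,R-D_0)\subset B(x,R)\subset W$ for all large $R$, whence $W=M$, contrary to assumption. Using this claim together with properness (so that any finite subfamily of chosen balls lies in a compact ball $\overline{B(o,D_k)}$), I would build greedily pairwise disjoint balls $B(c_k,\rho_k)\subset W$ with $\rho_k=k$ and $d(o,c_k)>D_{k-1}+2k$. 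This forces both $\rho_k\to\infty$ and the separation of the $k$-th ball from all the others to tend to infinity, and it makes the family locally finite.

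Next I would set $g(x)=\sup_k\max\{0,\,1-d(x,c_k)/\rho_k\}$. Because the balls are disjoint and locally finite, $g$ is continuous, $0\le g\le1$, the set $\{g>0\}=\bigcup_k B(c_k,\rho_k)$ is contained in $W$, and $g(c_k)=1$. To check that $g$ is a Higson function in the sense of Definition~\ref{defn:higson-function}, fix $r>0$. For $x$ within distance $r$ of the $k$-th ball with $k$ so large that the separation exceeds $2r$, any $y$ with $d(x,y)\le r$ is influenced by that ball alone, so $|g(x)-g(y)|\le d(x,y)/\rho_k\le r/\rho_k$; and if $x$ is at distance more than $r$ from every ball, then $g$ vanishes on $B(x,r)$. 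Hence $\nabla_r g\in\mathcal{B}_0(M)$ for every $r$, so $g\in C_\nu(M)$ and extends to $\bar g\in C(M^\nu)$.

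Finally, the centers $c_k$ leave every bounded set, so they accumulate at some point $p$ of $\nu M$; as $p$ lies in the closure of $\{c_k\}$, on which $\bar g\equiv1$, continuity gives $\bar g(p)=1$. The set $U=\{\,q\in M^\nu:\bar g(q)>1/2\,\}$ is open in $M^\nu$ and contains $p$, while $U\cap M\subset\{g>0\}\subset W$. Since $M$ is dense in $M^\nu$ and $U$ is open, $U\subset\overline{U\cap M}^{\,\nu}\subset\overline{W}^{\,\nu}$, so $\overline{W}^{\,\nu}$ is a neighborhood of the corona point $p$, which is what we wanted. I expect the only delicate point to be the extraction step above; once the disjoint large balls inside $W$ are in hand, the function-theoretic part is routine.
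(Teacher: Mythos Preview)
Your proof is correct and follows essentially the same route as the paper: extract inside $W$ a sequence of pairwise disjoint balls with radii tending to infinity and centers going to infinity, build a tent-type Higson function supported on their union and equal to $1$ at the centers, and take a superlevel set of its continuous extension as the required neighborhood of an accumulation point on $\nu M$. The only differences are cosmetic: the paper obtains the function as $g=|f|$ with $f$ the alternating-sign function from Proposition~\ref{prop:g-delta-pt} and is terser about the extraction step (simply invoking that $M$ is non-compact), whereas you construct the one-signed $g$ directly and give a careful greedy argument for the disjoint balls after disposing of the case $W=M$.
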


\begin{proof}
  If $W\subset M$ contains ball of arbitrarily large radius, then,
  because $M$ is not compact, there is a sequence, $(x_n)$, of points
  in $W$ without limit point in $M$, and a sequence of positive real
  numbers $r_n\uparrow \infty$ such that the metric balls $B(x_n,r_n)$
  are mutually disjoint and contained in $W$.  If $f$ is the function
  constructed in Proposition~\ref{prop:g-delta-pt}, then $g=|f|$
  admits a continuous extension, $\bar g$, to $M^\nu$ that satisfies
  $\bar g(p)=1$ for any $p\in \nu M$ that is an accumulation point of
  the sequence $(x_n)$.  Therefore ${\bar g}^{-1}(0,1]$ is an open
  neighborhood of $p$ contained in the closure of $W$ in $M^\nu$.
\end{proof}

The Higson compactification of a proper coarse space is defined as the
maximal ideal space of the algebra of Higson functions on the space.
The question arises whether it is possible to construct the Higson
compactification directly form the topological structure of the space,
or whether the Higson compactification is a Wallman-Frink
compactification. A Wallman-Frink compactification can be defined
using $\mathcal H$-ultrafilters, where $\mathcal H$ is the ring of
zero sets of Higson functions, topologized in a appropriate
manner. The resulting space may not be Hausdorff and has the Higson
compactification as a quotient space. Understanding the precise
relationship between the two compactifications will lead to an
intrinsic characterization of $\mathcal H$-set, toward which
Proposition~\ref{cor:g-delta-pt} is a minor contribution.

Even if the statement of Proposition~\ref{p:Higson 1} was not true
when the first-countability axiom is removed, the following result is
always true by the maximality of the Higson compactification among all
coarse compactifications.

\begin{proposition}\label{p:Higson 2} Proper topological coarse structures are
  induced by their Higson compactifications.
\end{proposition}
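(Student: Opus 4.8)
The plan is to fix a compactification $\overline{X}$ inducing the given proper topological coarse structure $\mathcal{E}$ on $X$ and to prove that the topological coarse structure $\mathcal{E}^\nu$ induced by the Higson compactification $X^\nu$ has exactly the same controlled sets as $\mathcal{E}$. Since a coarse structure is determined by its controlled sets, the whole statement reduces to the equivalence, for an arbitrary $E\subset X\times X$, between being $\overline{X}$-controlled and being $X^\nu$-controlled. I would establish each implication by using Lemma~\ref{l:topological control} to replace ``controlled'' by the condition that the expansions $\nabla_Ef$ vanish at infinity, the function $f$ running over an appropriate class.

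First I would record the one structural fact that makes both directions work: $\overline{X}$ is itself a coarse compactification of $(X,\mathcal{E})$. Indeed, the topological coarse structure arising from $\overline{X}$ is $\mathcal{E}$ by hypothesis, so the relevant identity map $(X,\mathcal{E})\to(X,\mathcal{E})$ is trivially coarse. By Corollary~\ref{c:coarse compactification}, every function in $\mathcal{B}(X)$ admitting an extension to $\overline{X}$ that is continuous on $\partial X$ then belongs to $\mathcal{B}_\nu(X)$; in particular such a function extends continuously over the Higson corona $\nu X$. Likewise, $X^\nu$ is a coarse compactification, so the functions in $\mathcal{B}(X)$ extending continuously on $\nu X$ are precisely those in $\mathcal{B}_\nu(X)$.

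For the implication ``$\overline{X}$-controlled $\Rightarrow$ $X^\nu$-controlled'', I would start with an $\mathcal{E}$-controlled $E$. By the definition of a Higson function (Definition~\ref{defn:higson-function}), $\nabla_Eg\in\mathcal{B}_0(X)$ for every $g\in\mathcal{B}_\nu(X)$, that is, for every function extending continuously over $\nu X$; Lemma~\ref{l:topological control}, applied to $X^\nu$, then gives that $E$ is $X^\nu$-controlled. (This inclusion is in fact immediate from maximality alone: being a coarse compactification, $X^\nu$ makes the identity $(X,\mathcal{E})\to(X,\mathcal{E}^\nu)$ coarse, so it sends $\mathcal{E}$-controlled sets to $\mathcal{E}^\nu$-controlled ones.) For the converse, I would take an $X^\nu$-controlled $E$; by Lemma~\ref{l:topological control} applied to $X^\nu$, $\nabla_Ef\in\mathcal{B}_0(X)$ for every $f\in\mathcal{B}_\nu(X)$, and hence, by the containment recorded above, for every $f\in\mathcal{B}(X)$ extending continuously on $\partial X$. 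Applying Lemma~\ref{l:topological control} now to $\overline{X}$ yields that $E$ is $\overline{X}$-controlled, and combining the two implications gives $\mathcal{E}=\mathcal{E}^\nu$.

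The step I expect to be the crux is the converse inclusion, because it is the only place where the hypothesis that $\mathcal{E}$ is \emph{topological} is genuinely used: one must recover the $\mathcal{E}$-controlled sets from the Higson functions, and this is possible precisely because the functions extending over $\partial X$ sit inside $\mathcal{B}_\nu(X)$. A general proper coarse structure admits no such function-theoretic description of its controlled sets, so the argument would break down there. As a purely topological alternative, I would keep in reserve the maximality of $X^\nu$ among coarse compactifications: it produces a canonical continuous surjection $\pi\colon X^\nu\to\overline{X}$ restricting to the identity on $X$ and satisfying $\pi^{-1}(\partial X)=\nu X$, after which one pushes the defining closure condition forward along $\pi\times\pi$. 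This route makes the role of maximality explicit but requires the slightly delicate verification that $\pi\times\pi$ carries the boundary of $X^\nu\times X^\nu$ exactly onto that of $\overline{X}\times\overline{X}$.
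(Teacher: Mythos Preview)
Your proposal is correct. The paper does not actually supply a proof of this proposition; it simply declares the result ``always true by the maximality of the Higson compactification among all coarse compactifications'' and moves on. Your argument is a faithful unpacking of that one-line justification: the forward inclusion $\mathcal{E}\subset\mathcal{E}^\nu$ is exactly the statement that $X^\nu$ is a coarse compactification, and your converse inclusion---that every function extending continuously over $\partial X$ already lies in $\mathcal{B}_\nu(X)$, so Lemma~\ref{l:topological control} transfers $X^\nu$-control back to $\overline{X}$-control---is the substantive content hidden behind the word ``maximality''. The alternative route via the canonical surjection $\pi\colon X^\nu\to\overline{X}$ that you keep in reserve is the more literal reading of the paper's hint, but your function-theoretic argument through Lemma~\ref{l:topological control} is cleaner and avoids the closure-pushing verification you rightly flag as delicate.
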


The following is a direct consequence of
Propositions~\ref{p:topological control},~\ref{p:extension of coarse
  maps} and~\ref{p:Higson 2}.

\begin{cor}\label{c:Higson 2-1} Let $X$ and ${X^{\prime}}$ be proper
  topological coarse spaces. Then the following properties hold:
\begin{enumerate}

\item[\textup{(i)}] A map $\varphi:X\to {X^{\prime}}$ is coarse
  if and only if it has an extension $\varphi^\nu:X^\nu\to
  {X^{\prime}}^{\nu}$ that is continuous on the points of $\nu X$ and
  such that $\varphi^\nu(\nu X)\subset\nu {X^{\prime}}$.

\item[\textup{(ii)}] Two coarse maps $\varphi,\psi:X\to
  {X^{\prime}}$ are close if and only if the extensions $\varphi^\nu$ and
  $\psi^\nu$, given by property~\textup{(i)}, are equal on
  $\nu X$.

\end{enumerate}
\end{cor}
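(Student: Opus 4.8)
The plan is to assemble the statement from the three cited results, the linchpin being the identification, furnished by Proposition~\ref{p:Higson 2}, that the given coarse structure on a proper topological coarse space coincides with the topological coarse structure induced by its Higson compactification. Once this identification is in place, the words ``coarse'' and ``close'' for the intended structures on $X$ and $X'$ become synonymous with their $X^\nu$- and ${X'}^\nu$-versions, and the corollary reduces to Propositions~\ref{p:topological control} and~\ref{p:extension of coarse maps} applied with $\overline{X}=X^\nu$, $\partial X=\nu X$, $\overline{X'}={X'}^\nu$, and $\partial X'=\nu {X^{\prime}}$.

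For property~(i) I would argue the two directions separately. For the ``only if'' direction, if $\varphi:X\to {X^{\prime}}$ is coarse, then Proposition~\ref{p:extension of coarse maps} immediately yields a unique extension $\varphi^\nu:X^\nu\to{X'}^\nu$ that is continuous on the points of $\nu X$ and satisfies $\varphi^\nu(\nu X)\subset\nu {X^{\prime}}$, which is exactly the asserted conclusion. For the ``if'' direction, suppose $\varphi$ admits such an extension; then $\varphi$ and $\varphi^\nu$ meet the hypotheses of Proposition~\ref{p:topological control}(i) with the compactifications $X^\nu$ and ${X'}^\nu$, so $\varphi$ is coarse for the topological coarse structures these Higson compactifications induce. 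Invoking Proposition~\ref{p:Higson 2} to identify those induced structures with the given ones on $X$ and ${X^{\prime}}$ shows that $\varphi$ is coarse.

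For property~(ii), let $\varphi,\psi:X\to {X^{\prime}}$ be coarse maps. By part~(i) they possess extensions $\varphi^\nu,\psi^\nu$ satisfying the hypotheses of Proposition~\ref{p:topological control}(i). Proposition~\ref{p:topological control}(ii), applied with $\overline{X}=X^\nu$ and $\overline{X'}={X'}^\nu$, then asserts that $\varphi$ and $\psi$ are ${X'}^\nu$-close if and only if $\varphi^\nu=\psi^\nu$ on $\nu X$. Using Proposition~\ref{p:Higson 2} once more to equate ${X'}^\nu$-closeness with closeness in the given coarse structure of ${X^{\prime}}$ delivers the desired equivalence.

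Since all the genuine verifications were carried out in the cited propositions, the only point demanding care is the repeated appeal to Proposition~\ref{p:Higson 2} to translate between the abstract coarse structures of $X$, ${X^{\prime}}$ and the topological coarse structures of their Higson compactifications. This translation is what makes the three results interlock, and I expect it to be the sole conceptual step rather than a substantive obstacle, consistent with the corollary being a direct consequence of the stated propositions.
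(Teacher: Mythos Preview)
Your proposal is correct and matches the paper's approach exactly: the paper offers no detailed proof, stating only that the corollary is a direct consequence of Propositions~\ref{p:topological control}, \ref{p:extension of coarse maps}, and~\ref{p:Higson 2}, and you have accurately spelled out how these three results combine. The decomposition you give---Proposition~\ref{p:extension of coarse maps} for the ``only if'' direction of~(i), Proposition~\ref{p:topological control} for the ``if'' direction of~(i) and for both directions of~(ii), with Proposition~\ref{p:Higson 2} supplying the identification between the given coarse structures and those induced by the Higson compactifications---is precisely what the paper intends.
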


The following result shows that proper metric coarse structures are
particular cases of the topological ones (Roe~\cite[Proposition~2.47]{Roe03}).

\begin{proposition}\label{p:each metric control is a topological control}
  The metric coarse structure of a proper metric space is equal to
  the topological coarse structure induced by its Higson
  compactification.
\end{proposition}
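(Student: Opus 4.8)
The plan is to prove equality of the two coarse structures on $M$ by showing that their controlled sets coincide. Write $\mathcal E_d$ for the metric coarse structure, whose controlled sets are the \emph{bounded} $E\subset M\times M$ (those with $\sup\{d(x,y):(x,y)\in E\}<\infty$), and $\mathcal E_\nu$ for the topological coarse structure induced by $M^\nu$, whose controlled sets are characterized through the expansions $\nabla_Ef$ by Lemma~\ref{l:topological control}. I would establish the two inclusions separately, the inclusion $\mathcal E_\nu\subset\mathcal E_d$ being the substantial one.

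For $\mathcal E_d\subset\mathcal E_\nu$: since $M^\nu$ is, by construction, the maximal ideal space of $C_\nu(M)$, the continuous functions on $M$ that extend continuously to $M^\nu$ are exactly the elements of $C_\nu(M)$. Thus condition~(iii) of Corollary~\ref{c:coarse compactification} holds, so $M^\nu$ is a coarse compactification of $M$ with its metric coarse structure (which is proper because $M$ is a proper metric space). By definition this means that the identity map from $(M,\mathcal E_d)$ to $(M,\mathcal E_\nu)$ is a coarse map; since coarse maps carry close maps to close maps, the two projections of any bounded $E$ stay $M^\nu$-close, so $E$ is $\mathcal E_\nu$-controlled. (More directly, for bounded $E$ with $\sup_E d\le R$ and any bounded $f$ whose extension is continuous on $\nu M$, such $f$ being Higson by~\eqref{e:C(nu X)}, one has $\nabla_E f\le\nabla_R f\in\mathcal B_0(M)$, whence $E$ is $M^\nu$-controlled by Lemma~\ref{l:topological control}.)

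For the reverse inclusion $\mathcal E_\nu\subset\mathcal E_d$ I would argue by contradiction, and here is the crux. Suppose $E$ is $M^\nu$-controlled but not bounded, and choose $(x_n,y_n)\in E$ with $d(x_n,y_n)\to\infty$. Since $M$ is proper, at least one of the two sequences leaves every compact set, and after relabeling I may assume $x_n\to\infty$. Passing to a subsequence I would extract two disjoint closed subsets $A=\{x_n\}$ and $B=\{y_n\}$ that are \emph{mutually far}, in the sense that $d(\cdot,A)+d(\cdot,B)\to\infty$ on the ends of $M$; concretely, I would pick the indices $n_k$ so that $x_{n_k}$ and $y_{n_k}$ have distance at least $k$ from all previously chosen points (possible because an escaping sequence eventually leaves any bounded neighbourhood of a finite set, and if $y_n$ happens to remain bounded one simply takes $B$ bounded, for which $d(\cdot,B)\to\infty$ at infinity is automatic). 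I would then consider the continuous separating function
\[
f(x)=\frac{d(x,B)}{d(x,A)+d(x,B)}\;,
\]
which is well defined since $A$ and $B$ are disjoint and closed, takes values in $[0,1]$, and satisfies $f(x_n)=1$ and $f(y_n)=0$. The pointwise Lipschitz constant of $f$ is bounded by $1/(d(x,A)+d(x,B))$, so the extraction guarantees that it tends to $0$ at infinity; hence $\nabla_r f\in C_0(M)$ for every $r>0$ and $f\in C_\nu(M)$. But $\nabla_E f(x_n)\ge|f(x_n)-f(y_n)|=1$ with $x_n\to\infty$, so $\nabla_E f\notin C_0(M)$, contradicting Lemma~\ref{l:topological control} since $f$ extends continuously to $M^\nu$.

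The main obstacle is this last construction: a Higson function must oscillate arbitrarily slowly near the corona, so a crude separator such as a single distance function $d(\cdot,x_0)$ is useless, since it fails to distinguish points equidistant from $x_0$ and its gradient does not decay. The device that resolves this is the normalized distance $d(\cdot,B)/(d(\cdot,A)+d(\cdot,B))$ combined with the subsequence extraction forcing $d(\cdot,A)+d(\cdot,B)\to\infty$ at infinity; it is precisely this divergence that makes the pointwise Lipschitz constant vanish at infinity and thereby certifies $f$ as a genuine continuous Higson function while still separating the two escaping sequences.
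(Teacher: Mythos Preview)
The paper does not supply its own proof of this proposition; it simply attributes the result to Roe~\cite[Proposition~2.47]{Roe03} and moves on. Your argument is correct and is essentially the standard one. The inclusion $\mathcal E_d\subset\mathcal E_\nu$ is exactly the statement that $M^\nu$ is a coarse compactification, and your treatment via Corollary~\ref{c:coarse compactification} and Lemma~\ref{l:topological control} is clean. For the substantial inclusion $\mathcal E_\nu\subset\mathcal E_d$, the separating function $f=d(\cdot,B)/(d(\cdot,A)+d(\cdot,B))$ with the inductive extraction forcing $d(\cdot,A)+d(\cdot,B)\to\infty$ at infinity is precisely the right device, and your Lipschitz bound $|\nabla f|\le 1/(d(\cdot,A)+d(\cdot,B))$ is what certifies $f\in C_\nu(M)$.

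Two small points worth tightening in a final write-up. First, the sentence ``at least one of the two sequences leaves every compact set'' is not literally true without first passing to a subsequence: what you know is that one of the sequences is unbounded, hence admits a subsequence escaping to infinity; the rest of your argument already presumes such a passage, so this is only a phrasing issue. Second, for $f$ to be well defined you need $\overline A\cap\overline B=\emptyset$, not merely $A\cap B=\emptyset$; your extraction (each new pair at distance $\ge k$ from all previously chosen points, and $d(x_{n_k},y_{n_k})\to\infty$) does give discrete closed sets $A$, $B$ with $d(A,B)>0$, so this is satisfied, but it deserves a line. With these cosmetic fixes the proof is complete and self-contained, which is more than the paper itself offers here.
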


Proposition~\ref{p:each metric control is a topological control} and
Corollary~\ref{c:Higson 2-1} have the following consequences.

\begin{theorem}\label{t:Higson, metric} Let $X$ and ${X^{\prime}}$ be
  proper metric spaces. Then a map $\varphi:X\to {X^{\prime}}$
  is a coarse equivalence if and only if it has an extension
  $\varphi^\nu:X^\nu\to {X^{\prime}}^{\nu}$ such that $\varphi^\nu(\nu
  X)\subset\nu {X^{\prime}}$, $\varphi^\nu$ is continuous on the points
  of $\nu X$ and the restriction $\varphi^\nu:\nu X\to\nu {X^{\prime}}$
  is a bijection.
\end{theorem}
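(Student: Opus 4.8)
The plan is to deduce the theorem from Proposition~\ref{p:each metric control is a topological control} and Corollary~\ref{c:Higson 2-1}. By the former, the metric coarse structures of $X$ and $X'$ coincide with the topological coarse structures induced by their Higson compactifications, so $X$ and $X'$ are proper topological coarse spaces and Corollary~\ref{c:Higson 2-1} applies to them.

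For the ``only if'' part, suppose $\varphi$ is a coarse equivalence with coarse inverse $\psi:X'\to X$, so that $\psi\circ\varphi$ and $\varphi\circ\psi$ are close to the respective identities. Corollary~\ref{c:Higson 2-1}(i) furnishes extensions $\varphi^\nu$ and $\psi^\nu$, each continuous on and preserving the corona. I would first observe that $\psi^\nu\circ\varphi^\nu$ extends $\psi\circ\varphi$, is continuous on $\nu X$, and carries $\nu X$ into $\nu X$; by the uniqueness in Proposition~\ref{p:extension of coarse maps} it is therefore the extension $(\psi\circ\varphi)^\nu$. Since $\psi\circ\varphi$ is close to $\mathrm{id}_X$, Corollary~\ref{c:Higson 2-1}(ii) forces $\psi^\nu\circ\varphi^\nu=\mathrm{id}$ on $\nu X$, and symmetrically $\varphi^\nu\circ\psi^\nu=\mathrm{id}$ on $\nu X'$. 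Thus $\varphi^\nu|_{\nu X}$ and $\psi^\nu|_{\nu X'}$ are mutually inverse, so $\varphi^\nu:\nu X\to\nu X'$ is a bijection, as required.

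For the ``if'' part, Corollary~\ref{c:Higson 2-1}(i) shows at once that $\varphi$ is coarse, and the restriction $\varphi^\nu:\nu X\to\nu X'$ is a continuous bijection between compact Hausdorff spaces, hence a homeomorphism. The task is to manufacture a coarse inverse $\psi:X'\to X$, and I would obtain it from two geometric consequences of the corona bijection. The surjectivity of $\varphi^\nu|_{\nu X}$ should give that $\varphi(X)$ is a net in $X'$: if not, the set of points lying at distance $>1$ from $\varphi(X)$ contains balls of arbitrarily large radius, so by Proposition~\ref{cor:g-delta-pt} its closure is a neighborhood of some $p'\in\nu X'$; writing $p'=\varphi^\nu(p)$ and approximating $p$ by a sequence in $X$, continuity of $\varphi^\nu$ at $p$ would push points of $\varphi(X)$ into that neighborhood, a contradiction. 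Granting this, I define $\psi(x')$ to be any point of $X$ with $d'(\varphi(\psi(x')),x')\le R$, where $R$ is the net constant; then $\varphi\circ\psi$ is automatically $R$-close to $\mathrm{id}_{X'}$.

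It remains to check that $\psi$ is coarse and that $\psi\circ\varphi$ is close to $\mathrm{id}_X$, and both reduce to the uniform metric properness of $\varphi$, namely that a bound on $d'(\varphi(x),\varphi(y))$ forces a bound on $d(x,y)$. This is where I expect the real work. The injectivity of $\varphi^\nu|_{\nu X}$ is the only available input, so I would argue by contradiction: if $d'(\varphi(x_n),\varphi(y_n))$ stays bounded while $d(x_n,y_n)\to\infty$, pass to convergent subsequences $x_n\to p$, $y_n\to q$ in $X^\nu$; the bounded and mixed cases are disposed of by properness and coarseness, leaving $p,q\in\nu X$. Here I need the key separation lemma for Higson coronas: a bounded metric distance between two escaping sequences forces their corona limits to coincide, because $|f(a_n)-f(b_n)|\le\nabla_R f(a_n)\to0$ for every Higson function $f$ and Higson functions separate points of the corona, whereas a distance tending to infinity forces the limits to differ. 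The latter is the crux and requires an explicit construction: along a sparse subsequence one builds a Higson function out of disjoint cone-shaped bumps of radius $\sim d(x_n,y_n)$ centered at the $y_n$, whose slopes $\sim 1/d(x_n,y_n)$ tend to $0$ so that every expansion $\nabla_r f$ vanishes at infinity, while $f(y_n)=1$ and $f(x_n)=0$. Applying the first statement to $\varphi(x_n),\varphi(y_n)$ gives $\varphi^\nu(p)=\varphi^\nu(q)$, hence $p=q$ by injectivity, contradicting the second statement. With uniform metric properness in hand, $\psi$ is uniformly expansive and metrically proper, and $\psi\circ\varphi$ is close to $\mathrm{id}_X$; by Proposition~\ref{prop:coarse equivalence is uniform} this makes $\varphi$ a coarse equivalence.
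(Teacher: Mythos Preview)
Your argument is correct and follows essentially the same route as the paper's: both derive the net property of $\varphi(X)$ from surjectivity of $\varphi^\nu|_{\nu X}$ via Proposition~\ref{cor:g-delta-pt}, and uniform metric properness from injectivity by producing a separating Higson bump function (the paper phrases this step with the full accumulation sets $P,Q$ rather than single limits $p,q$, but the content is the same). One technical caveat: since no point of $\nu X$ is a $G_\delta$ (Proposition~\ref{prop:g-delta-pt}), you cannot ``approximate $p$ by a sequence in $X$'' or ``pass to convergent subsequences'' in $X^\nu$---replace these by nets, and for the bump construction thin to a sparse integer-indexed subsequence \emph{before} passing to a convergent subnet so that the function you build still witnesses $\bar f(p)\ne\bar f(q)$.
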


\begin{proof}
  The ``if'' part follows from Corollary~\ref{c:Higson 2-1}.  To prove
  the ``only if'' part, assume that $\varphi:X\to X^{\prime}$ is coarse
  and admits an extension ${\varphi}^{\nu}: X^\nu\to X^{\prime \nu}$
  that is continuous on the points of $\nu X$ and takes
  $\nu X$ bijectively onto $\nu X'$ (hence $\varphi^\nu$ induces a
  homeomorphism of $\nu X$ onto $\nu X'$ because $\nu X$ is compact
  and Hausdorff).

  The hypotheses imply that $\varphi$ is uniformly metrically
  proper. Indeed, if that was not the case, there would be a positive
  number $R>0$ and two sequences $(x_n)$ and $(z_n)$ in $X$ such that
  $d'(\varphi(x_n),\varphi(z_n))\le R$ but $d(x_n, z_n)\ge n$ for all
  $n$.  Because $\varphi:X\to X^\prime$ is coarse (metric proper and
  uniformly expansive), it may be assumed, after passing to
  subsequences if needed, that neither of the sequences $(x_n)$ and
  $(z_n)$ has accumulation points in $X$, and that neither
  $(\varphi(x_n))$ nor $(\varphi(x_n))$ have accumulation points in
  $X^\prime$. Because $d(x_n,z_n)\ge n$, an application of
  Proposition~\ref{cor:g-delta-pt} shows that the set of accumulation
  points of the sequence $(x_n)$, say $P$, and of the sequence
  $(z_n)$, say $Q$, are disjoint closed subsets of $\nu X$.  Being
  continuous on $\nu X$, the mapping ${\varphi}^\nu$ takes $P$ and $Q$
  to the set of accumulation points of the corresponding sequences
  $(\varphi(x_n))$ and $(\varphi(x_n))$, respectively. But, since
  $d'(\varphi(x_n),\varphi(z_n))\le R$, it follows that
  $\varphi^\nu(P)=\varphi^\nu(Q)$. This contradicts that $\varphi^\nu$
  induces a homeomorphism of the compact Hausdorff space $\nu X$ onto
  $\nu X'$.

  It is also true that there is an $N>0$ such that the image
  $\varphi(X)$ is $N$-dense in $X^\prime$.  For if not there would be
  a sequence $(x'_n)$ in $X^\prime$ such that the union, $W$, of the
  metric balls $B(x'_n, n)$ is disjoint from the image
  $\varphi(X)$. By Corollary~\ref{cor:g-delta-pt}, the closure of $W$
  in ${X'}^\nu$ is a neighborhood of a point $p$ in $\nu X^\prime$.
  This clearly contradicts the hypothesis that the mapping $\varphi$
  admits an extension to $X^\prime$ that takes $\nu X$ onto $\nu
  X^{\prime}$ and is continuous at the points of $\nu X^{\prime}$.

  Thus, by the above, there is some number $N$ such that $\varphi(X)$
  is $N$-dense in $X^\prime$, and so a mapping $\psi:X^\prime \to X$
  can be defined, by choosing, for each $x'$ in $X^{\prime}$ a point
  $\psi(x')$ in $X$ such that $\varphi(\psi(x'))$ is in $B(x',N)$.

  The map $\psi$ is uniformly expansive (Definition~\ref{defn:coarse
    metric}~(i)). Let $R>0$ and let $x'$ and $z'$ in $X^\prime $ be
  such that $d'(x',z')\le R$. Then, by the definition of $\psi$, the
  points $\psi(x')$ and $\psi(z')$ satisfy $d'(\varphi(\psi(x')),
  \varphi(\psi(z'))) \le d'(x',z') + 2N\le R+2N$. Because $\varphi$ is
  uniformly metrically proper, given $R+2N>0$, there is $S=S(R+2N)>0$
  such that if $d'(\varphi(x), \varphi(z))\le R+2N$, then $d(x,z)\le
  S$. This applies in particular to $x=\psi(x')$ and $z=\psi(z')$.

  The map $\psi$ is metrically proper (Definition~\ref{defn:coarse
    metric}~(ii)). Let $B\subset X$ be bounded and suppose that
  $\psi^{-1}(B)\subset X'$ is not bounded. Then there is a sequence
  $x'_1, x'_2, \cdots$ in $\psi^{-1}(B)$ such that $d'(x'_{n+1},
  x'_1)\ge n $ for all $n$. The points $x_n=\psi(x'_n)$ are all in
  $B$, and satisfy $d'(\varphi(x_n), x'_n)\le N$ for all $n$, by the
  construction of $\psi$. Therefore $d'(\varphi(x_1),\varphi(x_n)) \ge
  d(x'_1, x'_n) -2N \ge n-2N$, so that the sequence $\varphi(x_n)$ is
  unbounded in $X'$. Since all the $x_n$ are in the bounded set $B$,
  this contradicts the uniform expansiveness of $\varphi$.

  The composite mapping $\varphi\circ \psi:X^\prime \to X^\prime$ is
  close to the identity of $X^\prime$ because for any $x'$ in
  $X^\prime$, the point $\psi(x')$ is such that $d'(\varphi(\psi(x')),
  x')\le N$.

  The composite mapping $\psi\circ \varphi:X\to X$ is close to the
  identity on $X$. Indeed, by the definition of $\psi$, for any $x$ in
  $X$, the point $\psi(\varphi(x))$ is such that
  $d'(\varphi(\psi(\varphi(x))), \varphi(x))\le N$. Because $\varphi$
  is uniformly metrically proper, there is an $S=S(N)$ such that
  $d(\psi(\varphi(x)), x)\le S$ for all $x$ in $X$.
\end{proof}

According to Corollary~\ref{c:coarsely quasi-convex 2}, in the case of
coarsely quasi-convex metric spaces, the property ``coarse
equivalence'' in this statement can be replaced by the property
``coarse quasi-isometry.''

\begin{theorem}
  Let $(M,d)$ and $({M^{\prime}},d^{\prime})$ be proper metric spaces,
  and suppose that $\varphi:C_\nu({M^{\prime}})\to C_\nu(M)$ is an
  algebraic isomorphism of their associated Higson algebras. Then $M$
  and ${M^{\prime}}$ are coarsely equivalent.  Furthermore, if $M$ and
  ${M^{\prime}}$ are coarsely quasi-convex, then $\varphi$ induces a
  coarse quasi-isometry between $M$ and ${M^{\prime}}$.
 \end{theorem}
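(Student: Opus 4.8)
The plan is to convert the algebraic datum into a genuine geometric map by passing to maximal ideal spaces, and then to use the $G_\delta$-characterization of $M$ inside $M^\nu$ as the bridge between a bare topological homeomorphism of compactifications and the coarse decomposition $M^\nu=M\sqcup\nu M$. First I would invoke Gelfand theory exactly as in the introductory Gelfand theorem. Since each Higson function extends uniquely and continuously to the Higson compactification, $C_\nu(M)\cong C(M^\nu)$ and $C_\nu({M^{\prime}})\cong C({M^{\prime}}^\nu)$, where $M^\nu$ and ${M^{\prime}}^\nu$ are the maximal ideal spaces. The algebraic isomorphism $\varphi\colon C_\nu({M^{\prime}})\to C_\nu(M)$ then dualizes, via precomposition of characters, to a homeomorphism $f\colon M^\nu\to {M^{\prime}}^\nu$ of the two compact Hausdorff spaces.

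The crucial step is to show that $f$ respects the splitting of each compactification into its metric space and its corona. For this I would use Proposition~\ref{prop:g-delta-pt}: a point of $M^\nu$ lies in $M$ if and only if it is a $G_\delta$-point, and the same holds for ${M^{\prime}}^\nu$. This membership criterion is purely topological, hence preserved by the homeomorphism $f$; therefore $f(M)={M^{\prime}}$ and, taking complements in the respective compactifications, $f(\nu M)=\nu {M^{\prime}}$. Both restrictions $f|_M\colon M\to{M^{\prime}}$ and $f|_{\nu M}\colon\nu M\to\nu{M^{\prime}}$ are then homeomorphisms.

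With this in hand, set $\phi=f|_M\colon M\to{M^{\prime}}$. The map $f$ is a continuous extension of $\phi$ to all of $M^\nu$ that carries $\nu M$ into $\nu{M^{\prime}}$ and restricts there to a bijection onto $\nu{M^{\prime}}$. These are precisely the hypotheses of the ``if'' direction of Theorem~\ref{t:Higson, metric}, which then yields that $\phi$ is a coarse equivalence; in particular $M$ and ${M^{\prime}}$ are coarsely equivalent, establishing the first assertion. For the second assertion, assume $M$ and ${M^{\prime}}$ are coarsely quasi-convex. Then Corollary~\ref{c:coarsely quasi-convex 1} promotes the coarse equivalence $\phi$ to a large scale Lipschitz equivalence, and Proposition~\ref{p:restrictions of large scale Lipschitz maps} restricts this equivalence to a coarse quasi-isometry between suitable nets of $M$ and ${M^{\prime}}$; alternatively one may cite Corollary~\ref{c:coarsely quasi-convex 2} directly.

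The step I expect to be the main obstacle is the middle one. An algebra isomorphism only ``sees'' the compactifications $M^\nu$ and ${M^{\prime}}^\nu$ through their character spaces, and there is no formal reason why the resulting homeomorphism should respect the interior/boundary split that carries the coarse information. Proposition~\ref{prop:g-delta-pt} is exactly what removes this difficulty, since it encodes membership in $M$ as a $G_\delta$-condition that any homeomorphism must preserve; once this is secured, the remainder is an assembly of the previously established machinery.
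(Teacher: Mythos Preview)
Your proposal is correct and follows essentially the same route as the paper: pass to maximal ideal spaces to obtain a homeomorphism $M^\nu\to{M'}^\nu$, use the $G_\delta$-characterization (Proposition~\ref{prop:g-delta-pt}) to see that this homeomorphism respects the decomposition into interior and corona, and then invoke Theorem~\ref{t:Higson, metric} and the coarsely quasi-convex corollaries. The only point you leave implicit that the paper makes explicit is why a purely \emph{algebraic} isomorphism of the Higson algebras yields a homeomorphism of compactifications: the paper notes that $C_\nu(M)$ has trivial radical (being a subalgebra of $C_b(M)$) and cites the automatic-continuity theorem of Gelfand--Raikov--Shilov, so that the algebraic isomorphism is in fact a topological one.
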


\begin{proof}
  The algebra $C_\nu(M)$ has trivial radical because it is a Banach
  subalgebra of $C_b(M)$. Therefore, by Gelfand~et~al.~\cite[Theorem 2
  of \S 9]{GelfandRaikovShilov64}, an algebraic isomorphism
  $\varphi:C_\nu({M^{\prime}})\to C_\nu(M)$ is automatically continuous
  and induces a homeomorphism of Higson compactifications $F:M^\nu\to
  {{M^{\prime}}}^\nu$.

  That homeomorphism $F$ must send $M$ homeomorphically onto $M'$
  because $M$ is first countable but no point in the Higson corona of
  $M$ is a $G_\delta$-set (Proposition~\ref{prop:g-delta-pt}).  Then
  the induced map $F:M\to {M^{\prime}}$ is a coarse equivalence by
  Theorem~\ref{t:Higson, metric}.  The last part of the statement now
  follows by invoking Proposition~\ref{p:coarsely quasi-convex}, which
  shows that a coarse mapping between coarsely quasi-convex spaces is
  a coarse quasi-isometry.
\end{proof}

We now prove a slightly strengthened version of Theorem~\ref{t:main
theorem} stated in the introduction.

\begin{theorem}
  Two proper coarse metric spaces, $(M,d)$ and $(M',d')$, are coarsely
  equivalent if and only if there is an algebraic isomorphism $C(\nu
  M')\to C(\nu M)$ induced by a homomorphism $\mathcal{B}_\nu(M')\to
  \mathcal{B}_\nu(M)$. Furthermore, if $M$ and $M'$ are defined by
  coarsely quasi-convex metrics {\rm(}or coarse metrics{\rm)} $d$ and
  $d'$, then the above condition is equivalent to the existence of a
  coarse quasi-isometry between $(M,d)$ and $(M',d')$.
\end{theorem}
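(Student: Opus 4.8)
The plan is to handle the two directions of the equivalence separately and then deduce the coarse quasi-isometry refinement. The forward-looking observation that organizes everything is that, by~\eqref{e:C(nu X)} and Theorem~\ref{t:Higson, metric}, ``coarse equivalence of $M$ and $M'$'' has already been reduced to ``existence of a map $M\to M'$ whose Higson extension restricts to a bijection $\nu M\to\nu M'$.'' For the \emph{only if} direction, suppose $\varphi\colon M\to M'$ is a coarse equivalence. By Lemma~\ref{l:coarse maps preserve Higson functions}, pullback $\varphi^*(f)=f\circ\varphi$ defines an algebra homomorphism $\mathcal{B}_\nu(M')\to\mathcal{B}_\nu(M)$ carrying $\mathcal{B}_0(M')$ into $\mathcal{B}_0(M)$, so by~\eqref{e:C(nu X)} it descends to a homomorphism $C(\nu M')\to C(\nu M)$. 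By Theorem~\ref{t:Higson, metric} the corona restriction $\varphi^\nu\colon\nu M\to\nu M'$ is a continuous bijection of compact Hausdorff spaces, hence a homeomorphism, and the descended map is exactly the pullback by this homeomorphism; therefore it is an isomorphism. Note that $\varphi$ need not be continuous, which is precisely why the Borel algebra $\mathcal{B}_\nu$ rather than $C_\nu$ must appear here.

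For the \emph{if} direction, let $h\colon\mathcal{B}_\nu(M')\to\mathcal{B}_\nu(M)$ be the given homomorphism and $\bar h\colon C(\nu M')\to C(\nu M)$ the induced isomorphism. Since these are function algebras on compact Hausdorff spaces they have trivial radical, so by the Gelfand theory invoked in the preceding theorem (Gelfand et al.~\cite[Theorem~2 of \S9]{GelfandRaikovShilov64}) the map $\bar h$ is induced by a homeomorphism $\theta\colon\nu M\to\nu M'$ of coronas. The substance is to realize $\theta$ by an honest map, and I would do this with the Borel homomorphism itself: for each $x\in M$ the evaluation $\delta_x$ is a character of $\mathcal{B}_\nu(M)$, so $\delta_x\circ h$ is a character of $\mathcal{B}_\nu(M')$, whose restriction to $C_\nu(M')$ determines a point $\varphi(x)\in {M'}^\nu$. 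For every $g\in C_\nu(M')$ this yields the identity $\bar g\circ\varphi=h(g)$ on $M$. Since Higson functions extend continuously over the corona and $\bar h$ is pullback by $\theta$, it follows that $\bar g(\varphi(x))=h(g)(x)\to\overline{h(g)}(\xi)=\bar g(\theta(\xi))$ as $x\to\xi\in\nu M$; because the functions $\bar g$ separate points of ${M'}^\nu$, this says precisely that the extension of $\varphi$ to the corona is $\theta$.

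The main obstacle, and the only thing that remains, is to show that $\varphi$ takes values in $M'$ rather than in $\nu M'$, since Theorem~\ref{t:Higson, metric} requires a genuine map $M\to M'$. Because $h$ carries idempotents to idempotents, each point-indicator $1_{\{y\}}\in\mathcal{B}_0(M')$ is sent to the indicator $1_{S_y}$ of a bounded Borel set $S_y\subset M$, and the $S_y$ are pairwise disjoint; the goal is to prove that they cover $M$ up to a bounded error, so that $x\in S_y$ forces $\varphi(x)=y\in M'$. Here I expect to use properness of the metric together with the $G_\delta$ characterization of Proposition~\ref{prop:g-delta-pt}: a value $\varphi(x)\in\nu M'$ would force $\{\varphi(x)\}$ to fail the $G_\delta$ property although $\{x\}$ has it, and this incompatibility, transported through $h$ and the disjoint idempotents $1_{S_y}$, should confine the set of corona-valued $x$ to a bounded set. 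On that bounded set $\varphi$ may be redefined to a fixed point of $M'$; this alters neither the coarse class nor the corona extension $\theta$, since only behavior at infinity matters. Once $\varphi\colon M\to M'$ is established with corona extension the homeomorphism $\theta$, the \emph{if} part of Theorem~\ref{t:Higson, metric} applies verbatim and shows that $\varphi$ is a coarse equivalence; no separate verification of uniform expansiveness or metric properness is needed, as these are subsumed by that theorem (and follow in any case from Corollary~\ref{c:Higson 2-1}).

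Finally, for the \emph{furthermore} clause assume in addition that $d$ and $d'$ are coarsely quasi-convex (the coarse-metric case is identical, all notions having obvious coarse-metric versions). By Proposition~\ref{p:coarsely quasi-convex} a coarse equivalence between such spaces is large scale Lipschitz in both directions, hence a large scale Lipschitz equivalence, and Proposition~\ref{p:restrictions of large scale Lipschitz maps} then restricts it to a coarse quasi-isometry between suitable nets; equivalently one may invoke Corollary~\ref{c:coarsely quasi-convex 2} directly. Conversely a coarse quasi-isometry is a coarse equivalence by Proposition~\ref{p:large scale Lipschitz extensions} and Proposition~\ref{p:coarsely Lipschitz}, so it supplies the algebraic data through the \emph{only if} direction already proved. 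This closes the chain of equivalences.
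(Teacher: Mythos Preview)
Your ``only if'' direction and the quasi-convex refinement are handled correctly and agree with the paper's reasoning.

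For the ``if'' direction you correctly isolate the central difficulty---showing that the map $\varphi$ built from characters lands in $M'$ rather than in $\nu M'$---but you do not resolve it. The paragraph beginning ``The main obstacle'' is a sketch of hopes, not an argument: you say you ``expect to use'' the $G_\delta$ characterization and that the incompatibility ``should confine'' the bad set, but no mechanism is supplied. There is also an internal mismatch in your idempotent strategy: the sets $S_y$ arise from applying $h$ to $1_{\{y\}}\in\mathcal{B}_\nu(M')$, whereas your $\varphi(x)$ is the point of ${M'}^\nu$ determined by the \emph{restriction} of $\delta_x\circ h$ to $C_\nu(M')$. Since $1_{\{y\}}\notin C_\nu(M')$ when $y$ is not isolated, membership $x\in S_y$ does not translate into any statement about $\varphi(x)$; the two constructions live over different algebras.

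The paper bypasses this obstacle by a different route. It first replaces $M$ and $M'$ by $K$-separated $K$-nets $A$ and $A'$ (Lemma~\ref{l:separated net}). On such discrete proper metric spaces every bounded function is continuous, so $\mathcal{B}_\nu(A)=C_\nu(A)$ and the maximal ideal space is exactly $A^\nu$. The return from $A'$ to $M'$ is effected by a Borel partition $\{F_x:x\in A'\}$ of $M'$ with $x\in F_x\subset B(x,K)$, which yields an algebra section $P\colon C_\nu(A')\to\mathcal{B}_\nu(M')$, $Pf=\sum_{x\in A'}f(x)\chi_{F_x}$. Composing $P$ with the given homomorphism and with restriction to $A$ produces a homomorphism $C_\nu(A')\to C_\nu(A)$ of commutative Banach algebras still inducing the isomorphism $C(\nu A')\to C(\nu A)$; Gelfand duality then furnishes a \emph{continuous} map $\varphi^\nu\colon A^\nu\to{A'}^\nu$ on the entire compactification, restricting to a homeomorphism on the coronas, and Theorem~\ref{t:Higson, metric} applies directly. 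The point of the net-and-partition manoeuvre is precisely to collapse the gap between $\mathcal{B}_\nu$ and $C_\nu$, so that your character construction becomes an honest continuous map of Higson compactifications rather than a Borel map into ${M'}^\nu$ whose range you must then control.
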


\begin{proof}
  Let $\varphi:\mathcal{B}_\nu(M')\to \mathcal{B}_\nu(M)$ be an algebraic
  homomorphism inducing an algebraic isomorphism $C(\nu M')\to C(\nu
  M)$.

  Fix $K>0$ and apply Lemma~\ref{l:separated net}, to obtain
  $K$-separated $K$-nets $A\subset M$ and $A'\subset M'$. The
  inclusion mapping $A\to M$ induces a norm-decreasing algebraic
  homomorphism $\mathcal{B}_\nu(M) \to C_\nu(A)$.

  There is a Borel partition of $M'$ of the form $\{F_x\mid x\in A'\}$
  with $x\in F_x\subset B(x,K)$ for each $x\in A'$. Such a partition
  can be constructed by induction on $n$ for an enumeration $(x_n)$ of
  the points of $A'$: take $F_{x_0}=B(x_0,K)$, and
  \[
  F_{x_{n+1}}=\{x_{n+1}\}\cup(B(x_{n+1},K)\setminus(F_{x_0}\cup\dots\cup
  F_{x_n}))
  \]
  if $F_{x_0},\dots,F_{x_n}$ are constructed. Let $\chi_x$ denote the
  characteristic function of $F_x$ for each $x\in A'$. Given a
  function $f$ on $A'$, $Pf=\sum_{x\in A'} f(x)\chi_x$ is a Higson
  function on $M'$ by the argument of Roe
  in~\cite[Proposition~(5.10)]{Roe93}. This defines a homomorphism of
  algebras $P:C_\nu(A') \to \mathcal{B}_\nu(M')$ because the sets
  $F_x$ form a partition. Moreover the composition of $P$ with the
  restriction homomorphism $\mathcal{B}_\nu(M')\to C_\nu(A')$ is the
  identity on $C_\nu(A')$ because $x\in F_x$ for all $x\in A'$.
  
  It follows from the above that there is a homomorphism of algebras
  $C_\nu(A') \to C_\nu(A)$ that induces the original isomorphism
  $C(\nu A')=C(\nu M')\to C(\nu A)=C(\nu M)$.  Since $C(\nu A)=C_\nu (A)
  /C_0(A)$ and $C(\nu A)=C_\nu (A) /C_0(A)$, this homomorphism of
  algebras induces a continuous mapping $\varphi^{\nu}:A^\nu\to A^{\prime
    \nu}$ that sends $\nu A$ into $\nu A'$ homeomorphically, and such that
  the restriction $\varphi=\varphi^\nu |A$ sends $A$ into $A'$. It thus
  follows from Theorem~\ref{t:Higson, metric} that $\varphi$ induces a
  coarse equivalence $M\to M'$.

  If the metrics $d$ and $d'$ are coarsely quasi-convex, then
  $\varphi$ can be improved to a coarse quasi-isometry, because of
  Corollary~\ref{c:coarsely quasi-convex 2}.
\end{proof}

\begin{example}
  This result implies that a coarse equivalence between two locally
  compact metric spaces induces a homeomorphism between their
  respective coronas. The converse is not true in general. It follows from
  Example~\ref{ex:coarsely quasi-convex} that the Higson
  compactifications of the subspaces $\mathbf{N}^2$ and $\mathbf{N}^3$
  of the natural numbers are the same as their Stone-\v{C}ech
  compactifications (which are homeomorphic to the Stone-\v{C}ech
  compactification of the natural numbers).

  If the Continuum Hypothesis is accepted, then the Stone-\v{C}ech
  corona of the natural numbers has $2^c$
  automorphisms~(Walker~\cite{Walker74}). On the other hand, there are
  at most $c$ maps of $\mathbf{N}$ into $\mathbf{N}$. Therefore, many
  homeomorphisms of the Higson corona of $\mathbf{N}^2$ into that of
  $\mathbf{N}^3$ are not induced by a map of $\mathbf{N}^2$ into
  $\mathbf{N}^3$.
\end{example}

\bibliographystyle{model1b-num-names}

\end{document}